\documentclass[11pt]{amsart}

\usepackage{amsmath,hyperref}
\usepackage{amsfonts}
\usepackage{mathrsfs}
\usepackage{amssymb}
\usepackage{amsthm}
\usepackage{mathtools}
\usepackage{graphicx}
\usepackage{setspace}
\usepackage{enumerate}
\usepackage{tikz}
\usepackage{url,color}
\usepackage[paperheight=11in,paperwidth=8.5in,margin=1in]{geometry}
\usepackage{chngcntr}
\usepackage{multicol}

\theoremstyle{plain}
\newtheorem{thm}{Theorem}[section] 
\newtheorem{cor}[thm]{Corollary}
\newtheorem{prop}[thm]{Proposition}
\newtheorem{lem}[thm]{Lemma}
\newtheorem{conj}[thm]{Conjecture}
\newtheorem{op}[thm]{Open Problem}
\counterwithin{figure}{section}
\counterwithin{table}{section}

\theoremstyle{definition}
\newtheorem{exmp}[thm]{Example}
\newtheorem{rmk}[thm]{Remark}


\newcommand{\fS}{{\mathfrak S}}

\DeclareMathOperator{\shift}{shift}
\DeclareMathOperator{\Bshift}{Bshift}
\DeclareMathOperator{\Cshift}{Cshift}

\DeclareMathOperator{\Shuff}{Shuff}

\DeclareMathOperator{\Ballot}{Ballot}
\DeclareMathOperator{\ballot}{ballot}

\DeclareMathOperator{\RBallot}{RBallot}

\definecolor{vividviolet}{rgb}{0.62, 0.0, 1.0}
\definecolor{OliveGreen}{rgb}{0.0, .6, .1}

\newcommand{\tg}{\textcolor{gray}}

\newcommand{\diam}{\text{diam}}

\newcommand{\ol}{\overline}
\newcommand{\ul}{\underline}

\setlength{\parindent}{15pt}
\hypersetup{
    colorlinks=true,
    allcolors={blue}
}

\author{Samantha Dahlberg}
\address{
Department of Mathematics,
Arizona State University,
Tempe, AZ, USA}
\email{sdahlber@asu.edu}

\author{Younghwan Kim}
\address{
Department of Mathematics,
Arizona State University,
Tempe, AZ, USA}
\email{ykim117@asu.edu}

\title{Diameters of Graphs on Reduced Words of 12 and 21-Inflations}
\begin{document}
\maketitle


\begin{abstract}
It is a classical result that any permutation in the symmetric group can be generated by a sequence of adjacent transpositions. The sequences of minimal length are called reduced words, and in this paper we study the graphs of these reduced words, with edges determined by relations in the underlying Coxeter group. Recently, the diameter has been calculated for the longest permutation $n\ldots 21$ by Reiner and Roichman as well as Assaf. In this paper we find inductive formulas for the diameter of the graphs of 12-inflations and many 21-inflations. These results extend to the associated graphs on commutation and long braid classes. Also, these results give a recursive formula for the diameter of the longest permutation, which matches that of Reiner, Roichman and  Assaf. Lastly, We  make progress on conjectured bounds of the diameter by Reiner and Roichman, which are based on the underlying hyperplane arrangement, and find families of permutations that achieve the upper bound and lower bound of the conjecture. In particular permutations that avoid 312 or 231 have graphs that achieve the upper bound. 
\end{abstract}
\section{Introduction}
\label{sec:intro}

The symmetric group $\fS_n$  of $[n]:=\{1,2,\ldots, n\}$ can be generated by adjacent transpositions $s_i=(i,i+1)$. The shortest sequences of adjacent transpositions that achieve $\pi\in\fS_n$ are called reduced words of $\pi$, the collection of which is denoted by $R(\pi)$. Tits~\cite{T69} showed that one can transform any reduced word of $\pi$ into any other by a sequence of 
\begin{enumerate}
\item commutation moves, where you exchange adjacent $i$ and $j$ if $|i-j|>1$, and 
\item long braid moves, where you exchange adjacent sequences $j(j+1)j$ and $(j+1)j(j+1)$. 
\end{enumerate}
The graphs $G_{\pi}$ formed by vertices $R(\pi)$ and edges associated to a single commutation or a single long braid move, called a commutation or long braid edge respectively, have been well studied~\cite{B99, EG87, E95,GL02,Stem96,Stem98}.  
Tits showed that the graph is connected~\cite{T69}. Stanley~\cite{Stanley84} enumerated $R(\pi)$ using a symmetric functions. These symmetric functions have led to connections to Schubert calculus, Demazure characters and flag skew Schur functions~\cite{AS18,A19_2,BJS93,MS15}. 
The associated graph $C_{\pi}$, which is $G_{\pi}$ contracted along commutation edges, has additionally received a lot of attention. Elnitsky proved that the vertices of $C_{\pi}$ are in bijection with rhombic tilings of certain polygons and that the graph is bipartite~\cite{E94}. The graph $C_{\pi}$ also has connections to geometric representation theory~\cite{CKL20}.

Interestingly, the theory of permutation pattern enumeration and avoidance has found itself highly useful in describing properties of $R(\pi)$ and its associated graphs $G_{\pi}$ and $C_{\pi}$~\cite{Daly, Meng, Tenner06, Tenner12, Tenner17}.  Vexillary permutations, those avoiding 2143, have $R(\pi)$ enumerated by the number of standard Young tableaux of a single shape~\cite{Stanley84}, and permutations avoiding 321 have only have commutation edgs~\cite{BJS93}. 

The longest permutation $\delta_n=n(n-1)\cdots 1$ has been particularly well studied. This is because $R(\pi)$ is closely connected to the weak Bruhat order and type A hyperplane arrangements. Reiner found that in $G_{\delta_n}$, the expected number of long braid edges is one~\cite{R05}. The expected number of long braid edges is also one even inside a single commutation class, those vertices connected by commutation edges~\cite{STWW17}. Tenner studied the expected number of commutation edges, which is more complex~\cite{Tenner15}.

The diameter of $G_{\delta_n}$ was first been determined by Reiner and Roichman~\cite{RR13}, using hyperplane arrangements, and  later by Assaf~\cite{A19}, using posets and Rothe diagrams. The diameter of $C_{\delta_n}$ has also been calculated~\cite{GMS20}. Our paper continues this direction and finds the diameters of $G_{\pi}$, $C_{\pi}$ and $B_{\pi}$ in several cases, where $B_{\pi}$ is $G_{\pi}$ contracted along all long braid edges. Our project is motivated by the conjectured upper and lower bounds for the diameter of $G_{\pi}$ by Reiner and Roichman~\cite{RR13}. While we haven't verified the conjectured bounds in all cases, we are able to prove these bounds in all cases we have a formula for the  diameter of $G_{\pi}$. 

Our paper is organized as follows. We first introduce preliminary topics in Section~\ref{sec:pre} where we also define the two families of permutations we are studying, 12-inflations and 21-inflations. In Sections~\ref{sec:12-inflations} and \ref{sec:21-inflations} we describe a way to encode the vertices $R(\pi)$, so we can more easily construct paths in $G_{\pi}$. With these paths we find exact recursive formulas for the diameters of $G_{\pi}$, $C_{\pi}$ and $B_{\pi}$ in the case of 12-inflations in Theorem~\ref{thm:12_formula}, and we find recursive upper and lower bounds for the diameters of graphs $G_{\pi}$, $C_{\pi}$ and $B_{\pi}$ in the case of certain 21-inflations in Theorem~\ref{thm:21-bounds}. Also, we prove several graph isomorphism using symmetries of the square in Section~\ref{sec:symm}.  We are then able to leverage our inductive formulas to find the exact diameters of $G_{\pi}$, $C_{\pi}$ and $B_{\pi}$ in the case of permutations $\pi$ that avoid 231 or 312 in Section~\ref{sec:pattern_avoidance}. Finally, in Section~\ref{sec:RRconjecture}, we will connect our results back to Reiner and Roichman's conjectured upper and lower bounds. In particular we prove that all permutations $\pi$ that avoid 231 or 312 satisfy this conjecture, and that these permutations achieve the upper bound of the conjecture in Theorem~\ref{thm:312-upper} and Theorem~\ref{thm:231-upper}. Additionally, we describe another infinite family of permutations that achieves the lower bound of the conjecture in Theorem~\ref{thm:low_bound}. We end our paper with Section~\ref{sec:future} where we consider further directions. 

\section{Preliminaries}
\label{sec:pre}
In this section we will give background information. The definitions are mainly obtained from Bj\"{o}rner-Brenti~\cite{BB}, Bollob\'{a}s~\cite{Bol}, and Stanley~\cite{Sta12,Sta99}.

Let $\fS_n, n\geq 0$ denote the symmetric group on $[n]:=\{1,2,\dots,n\}$. A permutation $\pi\in\fS_n$ permutes $[n]$ by mapping $i\mapsto \pi_i$. We write $\pi$ in one-line notation $\pi=\pi_1\pi_2\dots \pi_n$. For $\pi\in\fS_n$ we say that $n$ is the \textit{size} of $\pi$ and denote it by $|\pi|$. 

The symmetric group $\fS_n$ is generated by the set of simple reflections $S=\{s_1,s_2,\dots,s_{n-1}\}$, where $s_i$ interchanges $i$ and $i+1$. Thus, for $\pi\in \fS_n$, a decomposition $\pi=s_{i_1}s_{i_2}\dots s_{i_k}$ with letters in $S$ is called a \textit{reduced decomposition} for $\pi$ if $k$ is minimal. The word $i_1i_2\dots i_k$ is called a \textit{reduced word} for $\pi$. We say that $k$ is the \textit{length} of $\pi$ and denote it by $\ell(\pi)$. For a permutation $\pi\in \fS_n$ we will use $R(\pi)$ for the set of reduced words for $\pi$. Two reduced words for $\pi\in\fS_n$ can be related by
\begin{enumerate}
\item {\it short braid moves} or {\it commutation moves} by switching adjacent $jk$ if $|j-k|>1$ or
\item {\it long braid moves} by exchanging the occurrences of $j(j+1)j$ and $(j+1)j(j+1)$ on consecutive indices.  
\end{enumerate}
We say two reduced words for $\pi\in\fS_n$ are in the same \textit{commutation class} if we can obtain one from another by applying a sequence of commutation moves. \textit{Braid classes} of $\pi$ are defined in terms of the longbraid moves.

An \textit{inversion} in $\pi$ is a pair $(i,j)$ such that $i<j$ and $\pi_i>\pi_j$. Since the number of inversions in $\pi$ is equal to its length $\ell(\pi)$, the longest permutation in $\fS_n$ is $\delta_n=n\dots 21\in\fS_n$ and the shortest permutation is the identity $\iota_n=12\dots n\in\fS_n$.

\begin{exmp}
For $\pi=4231\in\fS_4$ we have $R(\pi)=\{12321,31231,13231,31213,13213,32123\}$, $|\pi|=4$ and  $\ell(\pi)=5$. We see that there are $\ell(\pi)=5$ inversions in $\pi$, which are $(1,2), (1,3), (1,4), (2,4)$ and  $(3,4)$.
\end{exmp}

Consider two sequences of integers $u=u_1u_2\ldots u_n$ and $v=v_1v_2\ldots v_n$. We say that $u$ and $v$ are {\it order isomorphic} if $u_i\leq u_j$ ($u_i\geq u_j$) if and only if $v_i\leq v_j$ ($v_i\geq v_j$). For example $463$ and $231$ are order isomorphic. We say a permutation $\pi$ {\it contains} a pattern $\sigma$ if $\pi$ has a subsequence that is order isomorphic to $\sigma$. We say a permutation $\pi$ {\it avoids} $\sigma$ if $\pi$ does not contain $\sigma$. 

\begin{exmp}
The permutation $\pi =142635$ contains a pattern $231$ because $\pi$ has the subsequence $463$ and avoids $321$ since there is no decreasing subsequence of length three. 
\end{exmp}

Consider a permutation $\pi\in\fS_n$. A {\it block} of $\pi$ is a consecutive sequence of $\pi_a\pi_{a+1}\ldots\pi_{b}$ whose union of values forms a consecutive interval of integers, $\{\pi_a,\pi_{a+1},\ldots,\pi_{b}\}=[c,d]=\{c,c+1,\ldots, d\}$ for some integers $c\leq d$. Let $\sigma\in\fS_k$ and $\pi_{(1)},\pi_{(2)},\ldots,\pi_{(k)}$ be permutations of possibly different non-negative lengths. The inflation of $\pi_{(1)},\pi_{(2)},\ldots,\pi_{(k)}$ by $\sigma$, written $\sigma[\pi_{(1)},\pi_{(2)},\ldots,\pi_{(k)}]$, is $\sigma$, but we replace $\sigma_i$ with a block order isomorphic to $\pi_{(i)}$ for $1\leq i\leq k$. See Figure~\ref{fig:inflation_ex} for an example.


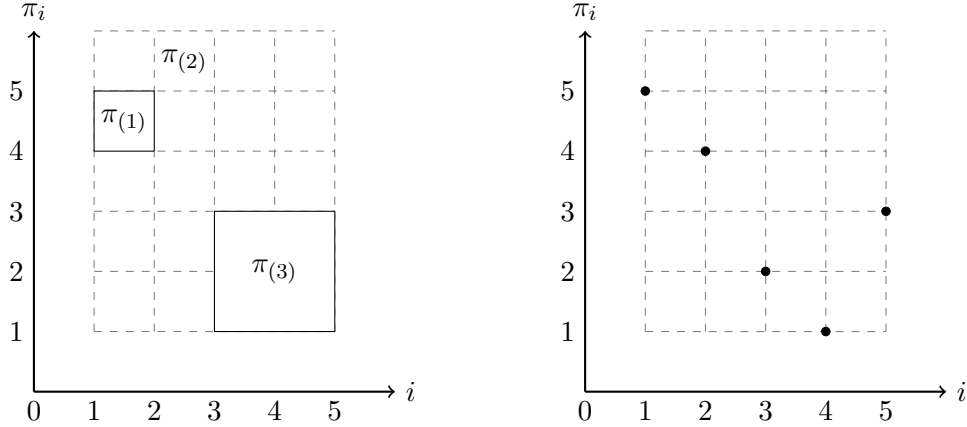
\begin{figure}
\begin{center}
\begin{tikzpicture}[scale=0.8]
\node[left, below] at (0,0) {$0$};
\node[below] at (1,0) {$1$};
\node[below] at (2,0) {$2$};
\node[below] at (3,0) {$3$};
\node[below] at (4,0) {$4$};
\node[below] at (5,0) {$5$};

\node[left] at (0,1) {$1$};
\node[left] at (0,2) {$2$};
\node[left] at (0,3) {$3$};
\node[left] at (0,4) {$4$};
\node[left] at (0,5) {$5$};

\draw[->,thick] (0,0)--(6,0) node[right]{$i$};
\draw[->,thick] (0,0)--(0,6) node[above]{$\pi_i$};
\draw[gray, dashed] (1,1) grid (5,6);
\filldraw[fill=white, draw=black] (1,4) rectangle (2,5);
\node at (1.5,4.5) {$\pi_{(1)}$};
\node at (2.5,5.5) {$\pi_{(2)}$};
\filldraw[fill=white, draw=black] (3,1) rectangle (5,3);
\node at (4,2) {$\pi_{(3)}$};

\end{tikzpicture}
\qquad
\qquad
\begin{tikzpicture}[scale=0.8]
\node[left, below] at (0,0) {$0$};
\node[below] at (1,0) {$1$};
\node[below] at (2,0) {$2$};
\node[below] at (3,0) {$3$};
\node[below] at (4,0) {$4$};
\node[below] at (5,0) {$5$};

\node[left] at (0,1) {$1$};
\node[left] at (0,2) {$2$};
\node[left] at (0,3) {$3$};
\node[left] at (0,4) {$4$};
\node[left] at (0,5) {$5$};

\draw[->,thick] (0,0)--(6,0) node[right]{$i$};
\draw[->,thick] (0,0)--(0,6) node[above]{$\pi_i$};
\draw[gray, dashed] (1,1) grid (5,6);
\filldraw (1,5) circle (2pt);
\filldraw (2,4) circle (2pt);
\filldraw (3,2) circle (2pt);
\filldraw (4,1) circle (2pt);
\filldraw (5,3) circle (2pt);
\end{tikzpicture}
\end{center}
\caption{The inflation of $\pi_{(1)}=21,\pi_{(2)}=\epsilon, \pi_{(3)}=213$ by $\sigma=231$ is $\pi=\sigma[\pi_{(1)},\pi_{(2)},\pi_{(3)}]=54213$ where $\epsilon\in \fS_0$. The diagram is illustrated above. }
\label{fig:inflation_ex}
\end{figure}

Let $G=(V,E)$ be a simple, connected graph with vertex set $V$ and edge set $E$. The \textit{distance} $d_G(u,v)$ between $u,v\in V$ is the number of edges in the shortest path between $u$ and $v$. The \textit{diameter} $\diam(G)$ of the graph $G$ is the maximum distance between any two vertices in $G$. A subgraph $G'=(V',E')$ is called an \textit{induced subgraph} if $G'$ contains all edges of $G$ that join two vertices in $V'$. 

For $\pi\in \fS_n$ we define $G_{\pi}$ to be the graph on $R(\pi)$ where edges come from commutation and long braid moves, which we will refer to as {\it commutation edges} and {\it long braid edges} respectively. 
Denote $C_{\pi}$ to be the graph $G_{\pi}$ where we contract along commutation edges, so the only edges remaining are long braid edges and the vertices are commutation classes. Let $B_{\pi}$ be the graph where we contract $G_{\pi}$ along long braid edges, so the only remaining edges remaining are commutation edges and the vertices are braid classes.  

\begin{exmp}
Let $\pi=4231\in\fS_4$. The graphs $G_{\pi},C_{\pi}$, and $B_{\pi}$ are in Figure~\ref{fig:graphs}. We see that $\diam(G_{\pi})=4, \diam(C_{\pi})=2$, and $\diam(B_{\pi})=2$.
\end{exmp}

\begin{figure}

\begin{center}
\begin{tikzpicture}
	\node[align=center] (G) at (0,6) {$G_{\pi}:$};
	\node[align=center] (G12321) at (2,6) {$12321$};
	\node[align=center] (G13231) at (4,6) {$13231$};
	\node[align=center] (G31231) at (6,5) {$31231$};
	\node[align=center] (G13213) at (6,7) {$13213$};
	\node[align=center] (G31213) at (8,6) {$31213$};
	\node[align=center] (G32123) at (10,6) {$32123$};
	\draw (G13231)--(G31231)--(G31213)--(G13213)--(G13231);
	\draw [double] (G12321)--(G13231);
	\draw [double] (G31213)--(G32123);
		
	\node[align=center] (C) at (0,3) {$C_{\pi}:$};
	\node[align=center] (C12321) at (2,3) {$\{12321\}$};
	\node[align=center] (C13231) at (6,3) {$\{13231,31231,13213,31213\}$};
	\node[align=center] (C32123) at (10,3) {$\{32123\}$};
	\draw [double] (C12321)--(C13231)--(C32123);
	
	\node[align=center] (B) at (0,0) {$B_{\pi}:$};
	\node[align=center] (B12321) at (2,0) {$\{12321,13231\}$};
	\node[align=center] (B31231) at (6,-1) {$\{31231\}$};
	\node[align=center] (B13213) at (6,1) {$\{13213\}$};
	\node[align=center] (B31213) at (10,0) {$\{31213,32123\}$};
	\draw (B12321)--(B31231)--(B31213)--(B13213)--(B12321);
\end{tikzpicture}
\end{center}
\caption{The graphs $G_{\pi},C_{\pi},$ and $B_{\pi}$ where $\pi=4231\in\fS_4$}
\label{fig:graphs}
\end{figure}
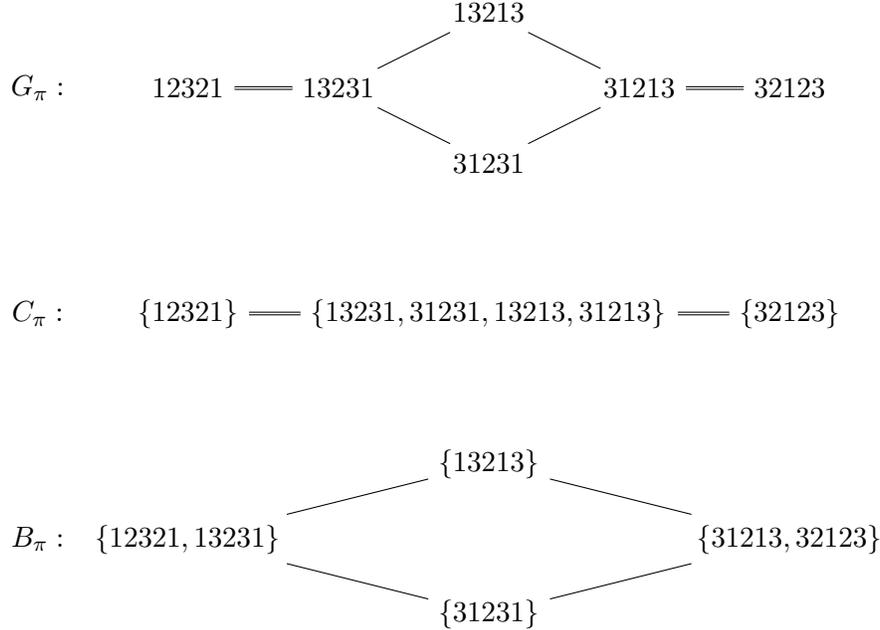

\begin{lem}
\label{lem:diam_bounds}
Let $\pi\in\fS_n$ and $w,w'\in R(\pi)$. 
\begin{enumerate}
\item If for any $r,r'\in R(\pi)$ we can find a path from $r$ to $r'$ with at most $c$ commutation moves plus at most $b$ long braid moves,
then $\diam(G_{\pi})\leq b+c$, $\diam(C_{\pi})\leq b$ and $\diam(B_{\pi})\leq c$. 
\item If we can find a pair $r,r'\in R(\pi)$ where any path from $r$ to $r'$ has at least $c$ commutation moves plus at least $b$  long braid moves, 
then  $\diam(G_{\pi})\geq b+c$, $\diam(C_{\pi})\geq b$ and $\diam(B_{\pi})\geq c$. 
\end{enumerate}
\end{lem}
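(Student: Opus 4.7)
The plan is to treat each of the three graphs separately and in each case compare paths in $G_\pi$ with paths in the quotient graph $C_\pi$ or $B_\pi$. For the upper bound direction (part (1)), I will project paths downward; for the lower bound direction (part (2)), I will lift paths upward.

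For part (1), the $G_\pi$ bound is immediate: a path of at most $c$ commutation plus at most $b$ long braid moves has length at most $b+c$, so $d_{G_\pi}(r,r')\le b+c$ for all $r,r'$, giving $\diam(G_\pi)\le b+c$. For the $C_\pi$ bound, take any two commutation classes $[r],[r']$ and pick representatives $r,r'\in R(\pi)$. By hypothesis there is a path in $G_\pi$ from $r$ to $r'$ using at most $b$ long braid moves. Applying the quotient map that contracts commutation edges, each long braid edge either becomes an edge of $C_\pi$ or a loop (when both endpoints already lie in the same commutation class), and commutation edges disappear entirely. The image is therefore a walk in $C_\pi$ from $[r]$ to $[r']$ of length at most $b$, so $d_{C_\pi}([r],[r'])\le b$. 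The argument for $\diam(B_\pi)\le c$ is identical with the roles of commutation and long braid moves swapped.

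For part (2), the $G_\pi$ bound is again immediate: the hypothesis says $d_{G_\pi}(r,r')\ge b+c$. For the $C_\pi$ bound, suppose for contradiction that $d_{C_\pi}([r],[r'])=k<b$, and let $[r]=[u_0],[u_1],\ldots,[u_k]=[r']$ be a shortest path in $C_\pi$. By the definition of $C_\pi$, consecutive commutation classes are joined by a long braid edge, so for each $i$ we may choose $v_i\in [u_i]$ and $w_i\in [u_{i+1}]$ with $v_i$ and $w_i$ related by a single long braid move. We now lift to $G_\pi$: inside each commutation class $[u_i]$ (which is connected by commutation moves in $G_\pi$) splice a sequence of commutation moves from $w_{i-1}$ to $v_i$, with the conventions $w_{-1}=r$ and $v_k=r'$. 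Concatenating gives a path in $G_\pi$ from $r$ to $r'$ that uses exactly $k<b$ long braid moves, contradicting the hypothesis. Hence $\diam(C_\pi)\ge d_{C_\pi}([r],[r'])\ge b$. The $B_\pi$ bound follows by the symmetric argument.

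The only nontrivial step is the lifting in part (2); the potential pitfall is ensuring the lifted path really is a valid walk in $G_\pi$ using only $k$ long braid moves, but this is guaranteed by Tits' connectivity theorem applied inside each commutation class (which gives the commutation-only bridges between $w_{i-1}$ and $v_i$). Everything else reduces to bookkeeping of the quotient maps $G_\pi\twoheadrightarrow C_\pi$ and $G_\pi\twoheadrightarrow B_\pi$.
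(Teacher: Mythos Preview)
Your proof is correct. Part~(1) is essentially identical to the paper's argument: project the given path down through the quotient map and count the surviving edges.

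For part~(2) your route is genuinely different from, and in fact more elementary than, the paper's. The paper argues by contradiction as you do, but after lifting a short $C_\pi$-path to $G_\pi$ it invokes a nontrivial external result from Reiner--Roichman: every cycle in $G_\pi$ contains an even number of long braid edges and an even number of commutation edges. The paper needs this because its lift is not done carefully; it only knows the lifted path has at least $b$ long braid edges (by the hypothesis) and hence some long braid edge must collapse under contraction, producing a cycle with a single braid edge and otherwise only commutation edges, violating parity. Your careful lift---choosing specific long-braid-related representatives $v_i,w_i$ and splicing commutation-only bridges between them inside each class---yields a $G_\pi$-path with \emph{exactly} $k<b$ long braid moves, contradicting the hypothesis immediately. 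This sidesteps the cycle-parity lemma entirely, which is a real gain in self-containment. One very small cosmetic point: in part~(1) you allow for the possibility that a long braid edge projects to a loop in $C_\pi$; the parity result actually rules that out, but since your argument does not depend on it, there is no harm.
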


\begin{proof}
Suppose that for any $r,r'\in R(\pi)$ there exists a path from $r$ to $r'$ with at most $c$ commutation moves plus at most $b$ long braid moves. Certainly the distance from $r$ to $r'$ is at most $b+c$. Because $d(r,r')\leq b+c$ for all $r,r'\in R(\pi)$, we know that $\diam(G_{\pi})\leq b+c$. Now consider the graph $C_{\pi}$ formed from $G_{\pi}$ by contracting along all commutation edges. Let $S$ and $S'$ be two commutation classes in $C_{\pi}$ and $v\in S$ and $v'\in S'$ be two vertices in $G_{\pi}$.  There exists a path $P$ from $v$ to $v'$ using at most $c$ commutation moves and $b$ long braid moves. The associated path in $C_{\pi}$ loses all commutation edges, because they are contracted, and keeps at most $b$ long braid edges so $d(S,S')\leq b$. Because $d(S,S')\leq b$ for all pairs of commutation classes we can conclude that $\diam(C_{\pi})\leq b$. Similarly, we can conclude that $\diam(B_{\pi})\leq c$. 

Suppose that there exists $r,r'\in R(\pi)$ so that  any path from $r$ to $r'$ has at least $c$ commutation moves plus at least $b$ long braid moves. Because $d(r,r')\geq b+c$ we can conclude that $\diam(G_{\pi})\geq b+c$. Now consider $C_{\pi}$ and the associated commutation equivalence classes $r\in S$ and $r'\in S'$. Suppose the contrary that $\diam(C_{\pi})<b$. This means that there exists a path from $S$ to $S'$ that uses less than $b$ long braid steps. Lift this path to some path $P$  in $G_{\pi}$ from $r$ to $r'$. Path $P$ has at least $b$ long braid steps, so when we contract $G_{\pi}$ to $C_{\pi}$ at least one of these long braid edges ends up contracted. Particularly, we then must have two $v,v'\in G_{\pi}$ connected by a long braid edge $e$, where $v$ and $v'$ end up in the same commutation class. This means there exists a path $Q$ from $v$ to $v'$ composed of only commutation moves. However, all cycles in $G_{\pi}$ have an even number of braid moves and an even number of commutation moves~\cite{RR13}. The cycle $Q$ together with $e$ has an odd number of braid moves, which is a contradiction. Hence, $\diam(C_{\pi})\geq b$ and similarly we can conclude that $\diam(B_{\pi})\geq c$. 
\end{proof}

Let $P$ be a poset. The \textit{Hasse diagram} of a finite poset $P$ is the graph whose vertices are the elements of $P$, whose edges are the cover relations, and such that if $x\lessdot y$ then $y$ is drawn above $x$. We say $P$ has a $\hat{1}$ if there exists an element $\hat{1}\in P$ such that $x\leq \hat{1}$ for all $x\in P$. Similarly, we say $P$ has a $\hat{0}$ if there exists an element $\hat{0}\in P$ such that $\hat{0}\leq x$ for all $x\in P$. We call the elements $\hat{1}$ and $\hat{0}$, if exist, the \textit{maximum} and the \textit{minimum} elements of $P$ respectively. A subset $C=\{x_1,x_2,\dots,x_n\}$ of $P$ is called a \textit{chain} if $x_1<x_2<\dots<x_n$. A chain is called \textit{maximal} if it is not contained in a longer chain of $P$. The \textit{length} $\ell(C)$ of a finite chain is defined by $\ell(C)=|C|-1$. If $P$ is a poset such that all maximal chains have the same length $n$, then we say that $P$ is \textit{graded of rank $n$}. In this case there is a unique \textit{rank function} $\rho: P\to\{0,1,\dots,n\}$ such that $\rho(x)=0$ if $x$ is a minimal element of $P$, and $\rho(y)=\rho(x)+1$ if $x\lessdot y$ in $P$.

A \textit{linear hyperplane} of $\mathbb{R}^d$ is a $(d-1)$-dimensional subspace $H=\{v\in\mathbb{R}^d:\alpha\cdot v=0\}$ of $\mathbb{R}^d$ where $\alpha\in\mathbb{R}^d$ is a fixed nonzero vector and $\alpha\cdot v$ is the usual dot product. For an arrangement $\mathcal{A}$ of linear hyperplanes in $\mathbb{R}^d$, the \textit{intersection poset} $L=L(\mathcal{A})=\bigsqcup_{i=0}^d L_i$ is the graded poset of all nonempty intersections of hyperplanes, including $\mathbb{R}^d$ itself, ordered by reverse inclusion. For example, we observe that the minumum $\hat{0}=L_0=\mathbb{R}^d$ element, the maximum $\hat{1}=L_d$ element, $L_1=\{H:H\in\mathcal{A}\}$, and $L_2=\{H\cap K: H\neq K, H\in \mathcal{A}, K\in\mathcal{A}\}$ is the set of all codimension-two intersection spaces, and so on. A \textit{chamber} of an arrangement $\mathcal{A}$ is a connected component of the complement $X=\mathbb{R}^d-\bigcup_{H\in\mathcal{A}}H$ of the hyperplanes. For more details on these definitions related to hyperplane arrangements see~\cite{RR13}

Let $\mathcal{A}$ be the reflection arrangement of $\mathbb{R}^{n-1}$ of type $A_{n-1}$, which is associated with the symmetric group $\fS_n$. We  identify the ambient space with the quotient of $\mathbb{R}^n=\{(x_1,\dots,x_n):x_i\in\mathbb{R}\}$ by the subspace spanned by $\{(1,1,\dots,1)^T\}$. Then the hyperplanes in $\mathcal{A}$ are $H_{ij}:=\{x_i=x_j\}$ for $1\leq i<j\leq n$. Note  that $\mathcal{A}$ is \textit{central} and \textit{essential}, meaning that $\bigcap_{H\in\mathcal{A}}H=\{\textbf{0}\}$ where $\textbf{0}$ denotes the origin of $\mathbb{R}^d$. The codimension-two intersection subspaces in $L_2$ are either $X_{ij,k\ell}:=\{x_i=x_j,x_k=x_{\ell}\}=H_{ij}\cap H_{k\ell}$ or $X_{ijk}:=\{x_i=x_j=x_k\}=H_{ij}\cap H_{jk}$. Let $\mathcal{C}$ be the set of chambers of $\mathcal{A}$. For two chambers $c,c'$, define $L_1(c,c'):=\{H\in L_1: H \text{ separates } c \text{ from } c'\}$. We define a graph $G_1$ on $\mathcal{C}$ where two chambers $c,c'\in \mathcal{C}$ are joined if $|L_1(c,c')|=1$. A \textit{gallery} from $c$ to $c'$ is a shortest path in the graph $G_1$. Fix a particular base chamber $c_0$ and let $\mathcal{R}$ denote the set of all galleries in $G_1$ from $c_0$ to $-c_0$. We define a graph $G_2$ on the set $\mathcal{R}$ where two galleries $r,r'\in\mathcal{R}$ are joined if they are separated by exactly one codimension-two intersection subspace $X$ in $L_2$. For an intersection space $X\in L_i$ for some $i$, a \textit{localized arrangement} in $\mathbb{R}^d/X$ is $\mathcal{A}_X=\{H/X:H\in\mathcal{A}, X\subseteq H\}$, which can be identified with the interval $[\mathbb{R}^d,X]$ in the intersection poset $L$. For $\pi\in\fS_n$, define $L_1(\pi):=L_1(c_0,\pi(c_0))$ and $L_2(\pi):=\{X\in L_2: \mathcal{A}_X\in L_1(\pi)\}$. We can interpret the set $L_1(\pi)$ as the usual (left) inversion set of $\pi$. We can also interpret $L_2(\pi)$ as the set $I_2(\pi)\cup I_3(\pi)$ where $I_2(\pi)$ is the set of all disjoint pairs of inversions $((i,j), (k,\ell))$ of $\pi$ and $I_3(\pi)$ is the set of all triples of inversions $((i,j), (i,k), (j,k))$ of $\pi$.  

\section{12-Inflations}
\label{sec:12-inflations}
In this section we will describe the collection of reduced words for permutations that are formed from {\it 12-inflations}, that is permutations equal to $\pi=12[\alpha,\beta]$ for some permutations $\alpha$ and $\beta$. We will also find exact recursive formulas for the diameters of the graphs $G_{\pi}$, $C_{\pi}$ and $B_{\pi}$ for 12-inflations. 

Let $u=u_1u_2\ldots u_k$ and $v=v_1v_2\ldots v_l$ be two sequences of integers. A {\it shuffle} of $u$ and $v$ is a sequence $w=w_1w_2\ldots w_{k+l}$ of integers with a subsequence 
$w_{i_1}w_{i_2}\ldots w_{i_k}$ equal to $u$ and another subsequence $w_{j_1}w_{j_2}\ldots w_{j_l}$ equal to $v$ where $\{i_1,i_2,\ldots, i_k, j_1,j_2,\ldots, j_l\}=[k+l]$. 
Let $\Shuff(u,v)$ be the collection of all shuffles of $u$ and $v$.
In order to describe the set of reduced words of $12$-inflations, $\pi=12[\alpha,\beta]$, we will be shuffling reduced words of $\alpha$ and $\beta$.   In these shuffles we will want to distinguish the letters that come from reduced words of $\alpha$ from those that come from  reduced words of $\beta$. To do this we will write reduced words of $\alpha\in\fS_a$ in the alphabet $\ul{[a-1]}=\{\ul{1},\ul{2},\ldots, \ul{a-1}\}$ and reduced words of $\beta\in\fS_b$ in the alphabet $\ol{[b-1]}=\{\ol{1},\ol{2},\ldots, \ol{b-1}\}$. Denote these sets  $\ul{R}(\alpha)$ and $\ol{R}(\beta)$ respectively. 

\begin{exmp} For $\alpha=21$ and $\beta=231$, we observe that 
$\ul{R}(\alpha)=\{\ul{1}\}$, $\ol{R}(\beta)=\{\ol{12}\}$ and $\Shuff(\ul{1},\ol{1}\ol{2})=\{\ul{1}\ol{1}\ol{2},\ol{1}\ul{1}\ol{2},\ol{1}\ol{2}\ul{1}\}$. 
\end{exmp}

Let $\pi=12[\alpha,\beta]$ and 
$$U_{\alpha,\beta} =  \bigcup_{u\in \ul{R}(\alpha),{v}\in \ol{R}(\beta)} \Shuff(u,{v}).$$
We will define a graph $G_{\alpha,\beta}$ with vertices $U_{\alpha,\beta}$ and edges formed from the following relations:
\begin{enumerate}
\item Commutation moves, which come from exchanging adjacent letters in the following cases. 
\begin{enumerate}
\item $\ul{jk}$ if $|j-k|>1$, 
\item $\ol{jk}$ if $|j-k|>1$ 
\item $\ul{j}\ol{k}$ for any $j$ and $k$
\end{enumerate}
\item Long braid moves, which comes from exchanging the following  occurrences on consecutive indices. 
\begin{enumerate}
\item $\ul{j(j + 1)j}$  and $\ul{(j + 1)j(j + 1)}$ 
\item $\ol{j(j + 1)j}$ and $\ol{(j + 1)j(j + 1)}$
\end{enumerate}
\end{enumerate}
We will call moves like 1(a) and 2(a) {\it $\alpha$-moves}, moves like 1(b) and 2(b) {\it $\beta$-moves} and moves in 2(c) {\it shift-moves}. 
We will  show that $U_{\alpha,\beta}$ is in bijection with $R(\pi)$ by showing that the graph $G_{\alpha,\beta}$ is isomorphic to $G_{\pi}$. 

First we will define the map between the vertices, $\eta:U_{\alpha,\beta}\rightarrow [a+b-1]^{\ell(\pi)}$, as follows, where $[k]^m$ is the set of all words length $m$ with letters in $[k]$. Let $w\in U_{\alpha,\beta}$.  Then $\eta(w)=r$ is defined by 
$$
r_i=\left\{
\begin{array}{ll}
j&\text{ if }w_i=\ul{j}\\
j+a&\text{ if }w_i=\ol{j}.
\end{array}
\right.
$$
Notice that all outputs of $\eta$ are in  $[a+b-1]^{\ell(\pi)}$. See Figure~\ref{fig:12-graph} for an example of $G_{\pi}$ for a 12-inflation. 

\begin{exmp}
For $\alpha=21$ and $\beta=321$, we see that
$w=\ol{1}\ul{1}\ol{21}\in U_{\alpha,\beta}$ and $\eta(w)=3143$. 
\end{exmp}

We claim that $\eta$ is a bijection between $U_{\alpha,\beta}$ and $R(\pi)$, and further that $G_{\alpha,\beta}$ is isomorphic to $G_{\pi}$. To prove this, we first state and prove the following lemma.

\begin{lem}
\label{lem:12_edge_iff}
Let $\pi=12[\alpha,\beta]$, $|\alpha|=a$ and $|\beta|=b$. The map $\eta$ is injective. Also, if $w\in U_{\alpha,\beta}$ and $\eta(w)=r$ then we can describe exactly the commutation and braid moves of $r$ with conditions on $w$. 
\begin{enumerate}
\item We can perform a commutation move on $r_ir_{i+1}$ in $r$ if and only if $w_iw_{i+1}$ equals
\begin{enumerate}
\item $\ul{jk}$ for some $|j-k|>1$,
\item $\ol{jk}$ for some $|j-k|>1$ or
\item $\ul{j}\ol{k}$ or $\ol{k}\ul{j}$ for any $j$ and $k$. 
\end{enumerate}
\item We can perform a long braid move on $r_ir_{i+1}r_{i+2}$ in $r$ if and only if $w_iw_{i+1}w_{i+2}$ equals
\begin{enumerate}
\item $\ul{j(j+1)j}$ or $\ul{j(j-1)j}$ for any $j$ or
\item $\ol{j(j+1)j}$ or $\ol{j(j-1)j}$ for any $j$.
\end{enumerate}
\end{enumerate}
\end{lem}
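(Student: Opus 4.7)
The plan is to base the entire argument on the observation that $\eta$ sends underlined letters $\ul{j}$ (with $j \in \{1,\ldots,a-1\}$) to values in $\{1, \ldots, a-1\}$, and overlined letters $\ol{j}$ (with $j \in \{1,\ldots,b-1\}$) to values in $\{a+1, \ldots, a+b-1\}$, so the two image ranges are disjoint and separated by the value $a$, which is never attained. Injectivity follows immediately: from $r = \eta(w)$ we can recover each $w_i$ by inspecting whether $r_i < a$ or $r_i > a$, and then reading off $r_i$ or $r_i - a$ as the underlying index.

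For part (1), the forward implication reduces to a direct computation in each subcase. In subcases (a) and (b), writing $w_i w_{i+1} = \ul{jk}$ or $\ol{jk}$ gives $|r_i - r_{i+1}| = |j-k|$, which exceeds $1$ exactly under the stated hypothesis. In subcase (c), writing $w_i w_{i+1} = \ul{j}\ol{k}$ (or its reverse) yields $\{r_i, r_{i+1}\} = \{j, k+a\}$, and the gap estimate
\[
|j - (k+a)| = a + k - j \;\geq\; a + 1 - (a-1) = 2
\]
shows that a commutation move is always permitted. Conversely, any pair $w_i w_{i+1}$ not of one of the listed forms must consist of two letters from the same alphabet whose indices differ by exactly one (adjacent equal letters are excluded because reduced words have no repeated adjacent letters), forcing $|r_i - r_{i+1}| = 1$ and hence no commutation move in $r$.

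For part (2), the forward direction is immediate: each of the four listed patterns in $w$ maps under $\eta$ to a word of the form $j(j \pm 1)j$, which admits a long braid move. For the converse, suppose $r_i r_{i+1} r_{i+2}$ admits a long braid move, so $r_i = r_{i+2}$ and $|r_i - r_{i+1}| = 1$. The equality $r_i = r_{i+2}$ forces $w_i$ and $w_{i+2}$ to lie in the same alphabet, since otherwise their $\eta$-values lie in the two disjoint ranges. The adjacency $|r_i - r_{i+1}| = 1 < 2$ combined with the gap estimate above then forces $w_{i+1}$ into the same alphabet as well, so $w_i w_{i+1} w_{i+2}$ is one of the listed patterns. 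The only real obstacle is keeping the case analysis clean; the key content is the gap estimate $a + k - j \geq 2$, which encodes the fact that an $\alpha$-letter and a $\beta$-letter are always far enough apart in value to commute and never close enough to participate together in a braid.
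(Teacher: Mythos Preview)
Your proof is correct and follows essentially the same approach as the paper's own proof: both arguments rest on the two key observations that $\eta$ preserves differences within a single alphabet and that cross-alphabet values are separated by a gap of at least $2$ at the unattained value $a$. Your treatment of injectivity (recovering $w_i$ from whether $r_i<a$ or $r_i>a$) is slightly cleaner than the paper's case analysis, but the substance is the same.
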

\begin{proof}

Let $w\in U_{\alpha,\beta}$ and $\eta(w)=r$. To show parts 1 and 2 it will suffice to show the following two notes. The first note is that $w_i-w_j=r_i-r_j$ if $w_i$ and $w_{i+1}$ are both in $\ul{[a-1]}$ or are both in $\ol{[b-1]}$. The second note is that $|r_i-r_j|>1$ if one of $w_i$ and $w_{i+1}$ is in $\ul{[a-1]}$ and the other is in $\ol{[b-1]}$. 

To show the first note, consider $w_iw_{i+1}=\ul{jk}$ for some $j$ and $k$. Then
$r_ir_{i+1}=jk.$ If $w_iw_{i+1}=\ol{jk}$ for some $j$ and $k$, then
$r_ir_{i+1}=(j+a)(k+a).$ This justifies the first note. 
To prove the second note consider the case where $w_iw_{i+1}=\ul{j}\ol{k}$ for any $j$ and $k$, then $r_ir_{i+1}=j(k+a).$ Since $j<a$ and $k+a>a$ we  have that $|r_i-r_j|>1$. Similarly if $w_iw_{i+1}=\ol{j}\ul{k}$ for any $j$ and $k$, then $|r_i-r_j|>1$. This proves part 1 and 2.

Next, we will show that $\eta$ is injective. Suppose that $\eta(w)=\eta(w')=r$ and $w\neq w'$. This means that there exists an $i$ with $w_i\neq w'_i$. If $w_i$ and $w'_i$ are both letters in $\ul{[a-1]}$, then because $w_i\neq w'_i$ the output of at index $i$ must be different by our definition of $\eta$. This is the same if we had supposed that $w_i$ and $w'_i$ are both letters in $\ol{[b-1]}$. The last case is if $w_i$ and $w'_i$ are in separate sets, one in $\ul{[a-1]}$ and the other in  $\ol{[b-1]}$. In this case the output at index $i$ must also be different since letters in $\ul{[a-1]}$ map to numbers less than $a$, and letters in $\ol{[b-1]}$ map to numbers more than $a$. Hence, $\eta$ must be injective. 
\end{proof}

\begin{thm}
\label{thm:12_graph_iso}
Let $\pi=12[\alpha,\beta]$. The map $\eta$ is a bijection between $U_{\alpha,\beta}$ and $R(\pi)$ and  $G_{\alpha,\beta}$ is isomorphic to $G_{\pi}$. 
\end{thm}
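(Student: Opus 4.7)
The plan is to prove the theorem in three stages: first that $\eta$ lands in $R(\pi)$, then that $\eta$ is surjective onto $R(\pi)$, and finally that $\eta$ realizes the claimed graph isomorphism; injectivity has already been supplied by Lemma~\ref{lem:12_edge_iff}. For the first stage, I fix $w\in U_{\alpha,\beta}$ arising as a shuffle of some $u\in \ul{R}(\alpha)$ and $v\in \ol{R}(\beta)$ and set $r=\eta(w)$. Every letter of $r$ lies in $[a-1]\cup\{a+1,\dots,a+b-1\}$, so each simple reflection $s_{r_i}$ coming from the underlined portion commutes with each one coming from the overlined portion. Hence $s_{r_1}\cdots s_{r_{\ell(\pi)}}$ factors as the product of the underlined subword (which equals the natural embedding of $\alpha$ acting on $\{1,\dots,a\}$) times the overlined subword (which equals the shift of $\beta$ acting on $\{a+1,\dots,a+b\}$). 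This product is exactly $\pi=12[\alpha,\beta]$, and since $\pi$ has no inversion crossing between the two blocks we get $\ell(\pi)=\ell(\alpha)+\ell(\beta)$, which matches the length of $\eta(w)$; so $r$ is reduced.

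For surjectivity, the key fact I plan to establish is that every $r\in R(\pi)$ avoids the letter $a$. Once this is in hand, the inverse of $\eta$ is forced: set $w_i=\ul{r_i}$ if $r_i<a$ and $w_i=\ol{r_i-a}$ if $r_i>a$, and then (by the same commuting-generators argument as above) the underlined and overlined subwords of $w$ are themselves reduced, and by length count must equal $\alpha$ and $\beta$ respectively, giving $w\in U_{\alpha,\beta}$ with $\eta(w)=r$. To prove the avoidance I will induct on $\ell(\pi)$: if $r_1=a$, then $s_a\pi$ has length $\ell(\pi)-1$, yet $\pi\in\fS_a\times\fS_b$ forces $\pi^{-1}(a)\leq a<a+1\leq \pi^{-1}(a+1)$, so left multiplication by $s_a$ swaps the values $a$ and $a+1$ in one-line notation at positions that were not previously an inversion and strictly increases the inversion count, a contradiction; and if $r_1\neq a$, then $s_{r_1}\pi$ still lies in $\fS_a\times \fS_b$ with shorter reduced word $r_2\cdots r_{\ell(\pi)}$, which avoids $a$ by induction, as does $r$.

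With bijectivity settled, the graph isomorphism is simply a reading of Lemma~\ref{lem:12_edge_iff}: the adjacency conditions 1(a)--(c) and 2(a)--(b) defining the edges of $G_{\alpha,\beta}$ are exactly the conditions on $w$ that Lemma~\ref{lem:12_edge_iff} identifies as equivalent to the existence of a commutation or long braid move at the corresponding positions of $r=\eta(w)$, so $\eta$ both preserves and reflects edges. The main obstacle I expect is the letter-$a$-avoidance step, a parabolic-subgroup fact that could alternatively be cited from Bj\"orner--Brenti~\cite{BB}, but which the short induction above handles cleanly; everything else is routine bookkeeping powered by Lemma~\ref{lem:12_edge_iff} and the commutativity of the two alphabets.
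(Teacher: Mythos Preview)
Your proof is correct and takes a genuinely different route from the paper's. The paper never shows directly that $\eta$ is surjective onto $R(\pi)$; instead it exhibits a single $\tilde w\in U_{\alpha,\beta}$ with $\eta(\tilde w)\in R(\pi)$, proves $G_{\alpha,\beta}$ is connected by building explicit paths to $\tilde w$, and then invokes Tits' theorem (connectedness of $G_\pi$) together with the edge characterization of Lemma~\ref{lem:12_edge_iff} to conclude that the image of $\eta$ must be all of $R(\pi)$. Your argument bypasses connectivity entirely: you prove the parabolic fact that no reduced word of an element of $\fS_a\times\fS_b\hookrightarrow\fS_{a+b}$ can use the letter $a$, which immediately gives an explicit inverse to $\eta$. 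This is more elementary and more informative (it explains \emph{why} surjectivity holds, rather than deducing it from topology), and it is exactly the standard Coxeter-theoretic fact about parabolic subgroups. The paper's indirect approach, on the other hand, is the template it reuses for the much harder $21$-inflation case in Theorem~\ref{thm:21-graph-iso}, where the analogue of your letter-avoidance argument would be far less transparent; so the paper's method trades directness here for uniformity with the later section. One small point worth making explicit in your Stage~3: Lemma~\ref{lem:12_edge_iff} tells you \emph{when} a move is available, but you should also note that because $\eta$ acts letter-by-letter, performing the move on $w$ and then applying $\eta$ gives the same result as applying $\eta$ and then performing the move on $r$, so edges really do correspond under $\eta$ and not merely edge-incidence.
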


\begin{proof}
We have already shown that $\eta$ is injective in Lemma~\ref{lem:12_edge_iff}. If we form a graph on the image of $\eta$ by connecting vertices according to possible commutation and long braid moves, Lemma~\ref{lem:12_edge_iff} proves that we get a graph that is isomorphic to $G_{\alpha,\beta}$. Let $I$ denote the graph that is formed from the image of $\eta$. Now, we will only have to show two things to prove that $G_{\alpha,\beta}$ is isomorphic to $G_{\pi}$, so also $\eta$ is a bijection between $U_{\alpha,\beta}$ and $R(\pi)$. We will first show that there exists a specific $\tilde w$ that maps to $\tilde r$ that is in $R(\pi)$. Because $G_{\pi}$ is a connected graph that can be generated by one single vertex $\tilde r\in R(\pi)$ by using commutation and braid moves, we can conclude that one connected component of $I$ is isomorphic to $G_{\pi}$. The second thing we will show is that $G_{\alpha,\beta}$ is connected, which proves that $G_{\alpha,\beta}$ is isomorphic to $G_{\pi}$. 

First, let us describe the specific $\tilde w\in U_{\alpha,\beta}$ where  $\eta(\tilde w)=\tilde r$  is in $R(\pi)$. Pick some $\tilde u\in R(\alpha)$ and $\tilde v\in R(\beta)$. Certainly the concatenation $\tilde w=\ul{\tilde u_1\tilde u_2\ldots \tilde u_{\ell(\alpha)}}\ol{\tilde v_1\tilde v_2\ldots \tilde v_{\ell(\beta)}}$ is in  $U_{\alpha,\beta}$. Further, $\eta(\tilde w) = \tilde r=\tilde u_1\tilde u_2\ldots \tilde u_{\ell(\alpha)}(\tilde v_1+a)(\tilde v_2+a)\ldots (\tilde v_{\ell(\beta)}+a)$ is in $R(\pi)$. 

Next we will show that $G_{\alpha,\beta}$ is connected by describing a path from any $w\in U_{\alpha,\beta}$  to $\tilde w$. Since $w\in U_{\alpha,\beta}$ we know that $w$ is formed from a shuffle of some $u\in \ul{R}(\alpha)$ and $v\in \ol{R}(\beta)$. Because we can perform commutation moves on $\ul{j}\ol{k}$ and $\ol{k}\ul{j}$ for any $j$ and $k$ we know that there is a path from $w$  to the concatenation $uv$ via commutation moves. Because we can perform commutation and long braid moves on the letters $\ul{[a-1]}$ or the letters $\ol{[b-1]}$ and because $G_{\alpha}$ and $G_{\beta}$ are connected there exist a sequences of commutation and long braid moves to transform $u$ to $\tilde u$ and $v$ to $\tilde v$. Hence, there must be a path from $uv$  to $\tilde w$ in $G_{\alpha,\beta}$, which completes the proof.
\end{proof}

We are now ready to prove an exact recursive formula for the diameters of $G_{\pi}$,  $C_{\pi}$ and  $B_{\pi}$. It will be helpful to define a statistic on words in $U_{\alpha,\beta}$. Given $w\in U_{\alpha,\beta}$ let $\shift(w)$ count the number of pairs of indices $i<i'$ such that $w_i\in \ul{[a-1]}$ and $w_{i'}\in \ol{[b-1]}$. 

\begin{exmp}
Given $\alpha=21$, $\beta=321$ and  
$w=\ol{1}\ul{1}\ol{21}\in U_{\alpha,\beta}$ we have that $\shift(w)=2$. 
\end{exmp}

\begin{thm}
\label{thm:12_formula}
Let $\pi=12[\alpha,\beta]$. 
\begin{enumerate}[(i)]
\item $\diam (G_{\pi})=\diam( G_{\alpha}) + \diam (G_{\beta}) + \ell(\alpha)\ell(\beta)$
\item $\diam (C_{\pi})=\diam (C_{\alpha}) + \diam( C_{\beta})$
\item $\diam( B_{\pi})=\diam( B_{\alpha} )+ \diam (B_{\beta} )+ \ell(\alpha)\ell(\beta)$
\end{enumerate}
\end{thm}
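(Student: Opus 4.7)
The plan is to identify $G_\pi$ with $G_{\alpha,\beta}$ via Theorem~\ref{thm:12_graph_iso} and work in the language of shuffles. For each $w\in U_{\alpha,\beta}$, write $u(w)\in \ul{R}(\alpha)$ and $v(w)\in \ol{R}(\beta)$ for its underlined and overlined subsequences. The three move-types act orthogonally on the triple $(u(w), v(w), \shift(w))$: an $\alpha$-move changes $u(w)$ by a single commutation or braid and leaves $v(w)$ and $\shift(w)$ fixed; a $\beta$-move is symmetric; a shift-move leaves $u(w)$ and $v(w)$ fixed while changing $\shift(w)$ by exactly $\pm 1$. This orthogonality is what drives both directions of the proof.

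For the lower bounds I would choose $u,u'\in\ul{R}(\alpha)$ and $v,v'\in\ol{R}(\beta)$ realizing the relevant diameters---in $G_\alpha, G_\beta$ for (i), in $B_\alpha, B_\beta$ for (iii), in $C_\alpha, C_\beta$ for (ii)---and set $w = uv$, $w' = v'u'$, so that $\shift(w) = \ell(\alpha)\ell(\beta)$ and $\shift(w') = 0$. Any path from $w$ to $w'$ in $G_{\alpha,\beta}$ induces walks $u(w)\to u(w')$ in $G_\alpha$ and $v(w)\to v(w')$ in $G_\beta$, so by the orthogonality it must contain at least $\diam(G_\alpha)$ total $\alpha$-moves (and, in the variants, at least $\diam(C_\alpha)$ $\alpha$-braids or $\diam(B_\alpha)$ $\alpha$-commutations), analogously for $\beta$, plus at least $\ell(\alpha)\ell(\beta)$ shift-moves. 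Since shift-moves are commutation moves in $G_\pi$, summing these counts and feeding them into Lemma~\ref{lem:diam_bounds}(2) yields all three lower bounds at once.

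For the upper bounds I would give two explicit routes between any $w$ and $w'$. Route A visits $u(w)v(w)$ and then $u(w')v(w')$: first bubble all overlined letters to the right using $\ell(\alpha)\ell(\beta) - \shift(w)$ shift-moves, then run chosen optimal paths of $\alpha$- and $\beta$-moves on the now-contiguous blocks (each literally a copy of $G_\alpha$ or $G_\beta$), then re-bubble to $w'$ using $\ell(\alpha)\ell(\beta) - \shift(w')$ shift-moves. Route B instead passes through $v(w)u(w)$ and $v(w')u(w')$ and costs $\shift(w) + \shift(w')$ shift-moves in total. Because $\min(\shift(w) + \shift(w'),\,2\ell(\alpha)\ell(\beta) - \shift(w) - \shift(w')) \le \ell(\alpha)\ell(\beta)$, one of the two routes always uses at most $\ell(\alpha)\ell(\beta)$ shift-moves. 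Choosing the inner $\alpha$- and $\beta$-paths to minimize, respectively, total moves, total braid moves, or total commutation moves, and applying Lemma~\ref{lem:diam_bounds}(1), produces the three upper bounds.

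The only technical bookkeeping is that each bubble-sort phase uses exactly the claimed number of shift-moves; this is automatic because a shift-move changes $\shift$ by $\pm 1$, each canonical form is the unique word with the given $u(w), v(w)$ and the extreme value of $\shift$, and whenever $\shift$ has not yet reached that extreme there is always an adjacent underlined--overlined pair available to swap in the required direction. I do not foresee a deeper obstacle: the entire argument rests on the independence of the invariants $u(w), v(w), \shift(w)$ together with the monotone sortability of binary templates.
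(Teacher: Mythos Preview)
Your proposal is correct and essentially mirrors the paper's own proof: the paper also works in $G_{\alpha,\beta}$, constructs the same two routes through the subgraphs $\{uv\}$ and $\{vu\}$ and averages them for the upper bound, and uses the same witness pair $w=\ul{u}\,\ol{v}$, $w'=\ol{v'}\,\ul{u'}$ together with the projection-to-$G_\alpha$, projection-to-$G_\beta$, and $\shift$-counting arguments for the lower bound. One small remark: for part (i) the lower bound follows directly from the total length of any path rather than via Lemma~\ref{lem:diam_bounds}(2), since the $\diam(G_\alpha)$ and $\diam(G_\beta)$ contributions need not split cleanly into commutation and braid counts; the paper handles (i) directly as well and reserves Lemma~\ref{lem:diam_bounds} for (ii) and (iii).
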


\begin{proof}
By Theorem~\ref{thm:12_graph_iso} we know that $G_{\pi}$ is isomorphic to $G_{\alpha,\beta}$, so it suffices to prove this theorem on the graph $G_{\alpha,\beta}$. We will need to consider  specific subgraphs of $G_{\alpha,\beta}$. The first is on the vertices $\{uv:u\in\ul{R}(\alpha), v\in\ol{R}(\beta)\}$, which we will call $G_1$. The second is on the vertices $\{vu:u\in\ul{R}(\alpha), v\in\ol{R}(\beta)\}$, which we will call $G_2$. Note that we can transform any two vertices in $G_1$ into each other by using at most $\diam( G_{\alpha})$ $\alpha$-moves plus at most $\diam( G_{\beta})$ $\beta$-moves. Similarly, we can transform any two vertices in $G_2$ into each other by using at most $\diam( G_{\alpha})$ $\alpha$-moves plus at most $\diam( G_{\beta})$ $\beta$-moves. 

We claim that given any $w\in U_{\alpha,\beta}$ that any path from $w$ to any vertex in $G_2$ takes at least $\shift(w)$ shift-moves, which are commutation moves. 
Note that $G_2$ contains exactly those vertices $w$ with $\shift(w)=0$. Also note that $\alpha$-moves and $\beta$-moves do not change $\shift(w)$, but shift-moves change $\shift(w)$ exactly by one. This means any path from $w$ to $G_2$ requires at least $\shift(w)$ shift-moves. 
Finally, note that there exists a path from $w$ to a vertex in $G_2$ that takes exactly $\shift(w)$ shift-moves. 
Because $G_1$ contains exactly those vertices $w$ with $\shift(w)=\ell(\alpha)\ell(\beta)$ we can similarly conclude that any path from $w$ to any vertex in $G_1$ takes at least $\ell(\alpha)\ell(\beta)-\shift(w)$ shift-moves. 
Also, note that there does exist a path from $w$ to a vertex in $G_1$ that uses exactly $\ell(\alpha)\ell(\beta)-\shift(w)$ shift-moves. 

Next we will prove an upper bound for the diameter of $G_{\pi}$. Consider any pair of $w$ and $w'$. We will show that there exists a path between  $w$ and $w'$ of length at most 
$\diam( G_{\alpha}) + \diam (G_{\beta}) + \ell(\alpha)\ell(\beta)$. To do this we will describe two paths from $w$ to $w'$. Let the first path, $P_1$, start at  $w$ that then takes $\shift(w)$ shift-steps to get to a vertex $h_2$ in $G_2$. The path $P_1$ ends at $w'$ with the $\shift(w')$ shift-moves to get you from some vertex $h_2'$ in  $G_2$ to $w'$. We complete $P_1$ by connecting $h_2$ to $h_2'$ with at most $\diam( G_{\alpha})$ $\alpha$-moves plus at most $\diam( G_{\beta})$ $\beta$-moves. The length of $P_1$ is at most $\shift(w)+\shift(w')$ shift-moves plus $\diam( G_{\alpha})$ $\alpha$-moves plus $\diam( G_{\beta})$ $\beta$-moves. We can similarly connect $w$ to $w'$ with another path, $P_2$, through $G_1$ that will have a length at most $2\ell(\alpha)\ell(\beta)-\shift(w)-\shift(w')$ shift-moves plus $\diam( G_{\alpha})$ $\alpha$-moves plus $\diam( G_{\beta})$ $\beta$-moves. All together the cycle formed by combining the paths $P_1$ and $P_2$ has length at most 
$$2\ell(\alpha)\ell(\beta)+2\diam( G_{\alpha})+2\diam( G_{\beta}).$$
This implies either $P_1$ or $P_2$ has length at most 
$\ell(\alpha)\ell(\beta)+\diam( G_{\alpha})+\diam( G_{\beta})$ 
proving that $d(w,w')\leq \ell(\alpha)\ell(\beta)+\diam( G_{\alpha})+\diam( G_{\beta})$. 
Hence, $\diam(G_{\pi})\leq \ell(\alpha)\ell(\beta)+\diam( G_{\alpha})+\diam( G_{\beta})$. 

Now we will prove a lower bound for the diameter of $G_{\pi}$. Pick two $u,u'\in R(\alpha)$ with $d(u,u')=\diam(G_{\alpha})$ and  two $v,v'\in R(\beta)$ with $d(v,v')=\diam(G_{\beta})$. Consider $w=\ul{u}\ol{v}$ and $w'=\ol{v'}\ul{u'}$ and any path $P$ between them. We have shown that this path will require at least $\ell(\alpha)\ell(\beta)$ shift-moves. We can also conclude that this path requires at least $\diam(G_{\alpha})$ $\alpha$-moves, otherwise we could project our path onto $G_{\alpha}$ and have a path from $u$ to $u'$ that is shorter than $\diam(G_{\alpha})$. Similarly, we will have at least $\diam(G_{\beta})$ $\beta$-moves. Hence, the path $P$ has length at least $\ell(\alpha)\ell(\beta)+\diam(G_{\alpha})+\diam(G_{\beta})$, proving that $\diam(G_{\pi})\geq \ell(\alpha)\ell(\beta)+\diam(G_{\alpha})+\diam(G_{\beta})$. Together with our upper bound we have proven our recursion on  the diameter of $G_{\pi}$. 

Using the ideas above we can show that for any $w,w'\in U_{\alpha,\beta}$ that we can find a path that uses  at most $\diam(C_{\alpha})$ long braid $\alpha$-moves, at most $\diam(C_{\beta})$ long braid $\beta$-moves and otherwise just uses commutation moves of different types. By Lemma~\ref{lem:diam_bounds} this proves that $\diam(C_{\pi})\leq \diam(C_{\alpha})+\diam(C_{\beta})$. Also, using similar ideas as above we can construct $w,w'\in U_{\alpha,\beta}$ where all paths from $w$ to $w'$ require at least $\diam(C_{\alpha})$ long braid $\alpha$-moves, at least $\diam(C_{\beta})$ long  braid $\beta$-moves and otherwise just uses commutation moves of different types. By Lemma~\ref{lem:diam_bounds} this proves that $\diam(C_{\pi})\geq \diam(C_{\alpha})+\diam(C_{\beta})$ and our recursion for $\diam(C_{\pi})$ is proven. 

Again, using the ideas above we can show that for any $w,w'\in U_{\alpha,\beta}$ that we can find a path that uses at most $\ell(\alpha)\ell(\beta)$ shift-moves, at most $\diam(B_{\alpha})$ commutation $\alpha$-moves, at most $\diam(B_{\beta})$ commutation $\beta$-moves and otherwise just uses long braid moves. By Lemma~\ref{lem:diam_bounds} this proves that $\diam(B_{\pi})\leq \diam(B_{\alpha})+\diam(B_{\beta})+\ell(\alpha)\ell(\beta)$. Also, using similar ideas as above we can construct $w,w'\in U_{\alpha,\beta}$ where all paths from $w$ to $w'$ require at least $\ell(\alpha)\ell(\beta)$ shift-moves, at least $\diam(B_{\alpha})$ commutation $\alpha$-moves, at least $\diam(B_{\beta})$ commutation $\beta$-moves and otherwise just uses braid moves. By Lemma~\ref{lem:diam_bounds} this proves that $\diam(B_{\pi})\geq \diam(B_{\alpha})+\diam(B_{\beta})+\ell(\alpha)\ell(\beta)$ and our recursion for $\diam(B_{\pi})$ is proven. 
\end{proof}

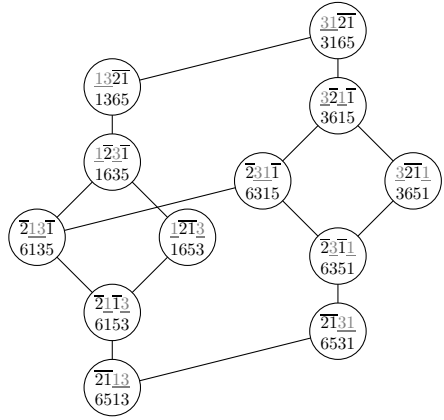
\begin{figure}
\begin{center}
\begin{tikzpicture}[every text node part/.style={align=center}, scale = .5,every node/.style={scale=0.6}]

\coordinate (A) at (0,0);
\coordinate (B) at (0,2);
\coordinate (C) at (2,4);
\coordinate (D) at (-2,4);
\coordinate (E) at (0,6);
\coordinate (F) at (0,8);
\coordinate (A1) at (6,1.5);
\coordinate (B1) at (6,3.5);
\coordinate (C1) at (8,5.5);
\coordinate (D1) at (4,5.5);
\coordinate (E1) at (6,7.5);
\coordinate (F1) at (6,9.5);
\draw (A) -- (B) -- (C) -- (E) -- (F);
\draw (B) -- (D) -- (E);
\draw (A1) -- (B1) -- (C1) -- (E1) -- (F1);
\draw (B1) -- (D1) -- (E1);
\draw (A) -- (A1);
\draw (D) -- (D1);
\draw (F) -- (F1);

\draw[draw = black, fill = white] (A) circle (.75);
\draw (A) node {$\ol{21}\tg{\ul{13}}$ \\ $6513$};
\draw[draw = black, fill = white] (B) circle (.75);
\draw (B) node {$\ol{2}\tg{\ul{1}}\ol{1}\tg{\ul{3}}$ \\ $6153$};
\draw[draw = black, fill = white] (C) circle (.75);
\draw (C) node {$\tg{\ul{1}}\ol{2}\ol{1}\tg{\ul{3}}$ \\ $1653$};
\draw[draw = black, fill = white] (D) circle (.75);
\draw (D) node {$\ol{2}\tg{\ul{1}}\tg{\ul{3}}\ol{1}$ \\ $6135$};
\draw[draw = black, fill = white] (E) circle (.75);
\draw (E) node {$\tg{\ul{1}}\ol{2}\tg{\ul{3}}\ol{1}$ \\ $1635$};
\draw[draw = black, fill = white] (F) circle (.75);
\draw (F) node {$\tg{\ul{13}}\ol{21}$ \\ $1365$};

\draw[draw = black, fill = white] (A1) circle (.75);
\draw (A1) node {$\ol{21}\tg{\ul{31}}$ \\ $6531$};
\draw[draw = black, fill = white] (B1) circle (.75);
\draw (B1) node {$\ol{2}\tg{\ul{3}}\ol{1}\tg{\ul{1}}$ \\ $6351$};
\draw[draw = black, fill = white] (C1) circle (.75);
\draw (C1) node {$\tg{\ul{3}}\ol{2}\ol{1}\tg{\ul{1}}$  \\ $3651$};
\draw[draw = black, fill = white] (D1) circle (.75);
\draw (D1) node {$\ol{2}\tg{\ul{3}}\tg{\ul{1}}\ol{1}$  \\ $6315$};
\draw[draw = black, fill = white] (E1) circle (.75);
\draw (E1) node {$\tg{\ul{3}}\ol{2}\tg{\ul{1}}\ol{1}$ \\ $3615$};
\draw[draw = black, fill = white] (F1) circle (.75);
\draw (F1) node {$\tg{\ul{31}}\ol{21}$ \\ $3165$};
\end{tikzpicture}
\end{center}
\caption{This is the graph $G_{\pi}$ of the 12-inflation $\pi=12[2143,312]=2143756$ with vertices given in both $U_{2143,312}$ and $R(\pi)$.}
\label{fig:12-graph}
\end{figure}

\section{21-Inflations}
\label{sec:21-inflations}
In this section we describe the collection of reduced words for permutations that are formed from {\it 21-inflations}, that is a permutation equal to $\pi=21[\alpha,\beta]$ for some permutations $\alpha$ and $\beta$. The graphs of 21-inflations are more complex than 12-inflations, so we will only find an exact recursive formula for the diameters of $G_{\pi}$,  $C_{\pi}$ and  $B_{\pi}$ for 21-inflations $21[\alpha,1]$ and $21[1,\beta]$. Because the longest permutation $\delta_n=n\ldots 21$ is a 21-inflation of the form $21[\alpha,1]$ we have an exact recursive formula for the diameters of $G_{\delta_n}$, $C_{\delta_n}$ and $B_{\delta_n}$.  For  other 21-inflation of the form $\pi=21[\alpha,\iota_k]$ we provide recursive upper and lower  bounds for the diameters of $G_{\pi}$, $C_{\pi}$ and $B_{\pi}$.  

\subsection{Reduced Words of 21-Inflations}
\label{subsec:21-inflation_ecoding}
We will first describe a set that is in bijection with reduced words in $R(\pi)$ where $\pi=21[\alpha,\beta]$, $|\alpha|=a$ and $|\beta|=b$. This set will contain certain shuffles of $R(\alpha)$, $R(\beta)$ and ballot sequences.

Let $x=x_1x_2\ldots x_n$ be a sequence of positive integers. Define $N_j(x)$ to be the number of $j$'s in $x$. We call $x$ a {\it ballot sequence} if for all positive integers $j<k$ we have that $N_j(x_1x_2\ldots x_m)\geq N_k(x_1x_2\ldots x_m)$ for all $m\in [n]$. Since we will often be referring to prefixes $x_1x_2\ldots x_m$ of $x=x_1x_2\ldots x_n$, $m\leq n$, denote $x^{(m)}=x_1x_2\ldots x_m$. 
Let $\Ballot_{a,b}$ be the collection of all ballot sequences that are rearrangements of the multiset $\{1^b, 2^b, \ldots, a^b\}$ where $i^b$ means there are $b$ copies of $i$. A {\it reverse ballot sequence} is a sequence of positive integers at most $L$, for some $L$, where for all $j<k\leq L$ we have that  $N_j(x^{(m)})\leq N_k(x^{(m)})$ for all $m\in [n]$. Let $\RBallot_{a,b}$ be the collection of all reverse ballot sequences on the same multiset $\{1^b, 2^b, \ldots, a^b\}$. There is a natural bijection that will be useful to us, 
$$f:\Ballot_{a,b}\rightarrow \RBallot_{b,a}.$$
We map $x\in \Ballot_{a,b}$ to $y\in \RBallot_{b,a}$ by
$$y_i = b-N_j(x^{(i-1)})\text{ if }x_i=j.$$ 
Later we will have words $w=w_1w_2\ldots w_n$ with letters from $[a+b-1]\cup\ul{[a-1]}\cup\ol{[b-1]}$. We can still define $N_j(w)$ to be the number of $j$'s in $w$ where we do not count $\ul{j}$ or $\ol{j}$. Also, if the subsequence of $w=w_1w_2\ldots w_n$ formed from only letters in $[a+b]$ is a ballot sequence in $\Ballot_{a,b}$ then we can define $f(w)$ similarly and only apply the map to the subsequence that is the ballot sequence. 

\begin{exmp} The sequence $112323$ is a ballot sequence in $\Ballot_{3,2}$ and $212211$ is a reverse ballot sequence in $\RBallot_{2,3}$. We have $f(112323)=212211$ and $f(11\ol{1}23\ul{2}2\ul{1}3)=21\ol{1}22\ul{2}1\ul{1}1$.
\end{exmp}

We are now ready to start defining the set that is in bijection with $R(\pi)$ where $\pi=21[\alpha,\beta]$. Given $x\in \Ballot_{a,b}$, $u\in \ul{R}(\alpha)$ and $v\in \ol{R}(\beta)$ define $\ol{\Shuff}(x,u,v)$ to the the collection of all $w\in {\Shuff}(x,u,{v})$ such that 
\begin{enumerate}
\item if $w_i=\ul{j}$ then $N_j(w^{(i-1)})=N_{j+1}(w^{(i-1)})$ and
\item if $w_i=\ol{j}$ then $N_j(f(w)^{(i-1)})=N_{j+1}(f(w)^{(i-1)})$
\end{enumerate}

\begin{exmp}Consider $x=112323$ in $\Ballot_{3,2}$, $u=\ul{21}$ in $\ul{R}(312)$ and ${v}=\ol{1}$ in $ \ol{R}(21)$. Then, $\ol{\Shuff}(x,u,{v})$  contains elements like $\ul{21}\ol{1}112323$ and $1\ul{2}1\ol{1}232\ul{1}3$. 
\end{exmp}

We are going to find that these shuffles in $\ol{\Shuff}(x,u,v)$ encode reduced words of $\pi$ so let 
$$V_{\alpha,\beta}=\underset{x\in \Ballot_{a,b}}{\bigcup_{u\in \ul{R}(\alpha),{v}\in \ol{R}(\beta)}} \ol{\Shuff}(x,u,{v}).$$
We will define a graph $H_{\alpha,\beta}$ with vertices $V_{\alpha,\beta}$ and edges formed from the following relations:
\begin{enumerate}
\item Commutation moves, which come from exchanging adjacent letters in the following cases.
\begin{enumerate}
\item $w_iw_{i+1}=\ul{pq}$ if $|p-q|>1$
\item $w_iw_{i+1}=\ol{pq}$ if $|p-q|>1$
\item $w_iw_{i+1}=pq$ if either $p>q$ or $p<q$ and $N_p(w^{(i-1)})>N_q(w^{(i-1)})$
\item  $w_iw_{i+1}=\ul{p}\ol{q}$ or $w_iw_{i+1}=\ol{q}\ul{p}$ for any $p$ and $q$
\item $w_iw_{i+1}=\ul{p}q$ or $w_iw_{i+1}=q\ul{p}$ if $|p-q|>1$ or $p>q$ 
\item $z_iz_{i+1}=\ol{p}q$ or $z_iz_{i+1}=q\ol{p}$ if $|p-q|>1$ or $q<p$ where $f(w)=z$
\end{enumerate}
\item Long braid moves, which comes from exchanging the following occurrences on consecutive
indices.
\begin{enumerate}
\item in $w$ we exchange $\ul{p(p+ 1)p}$ and $\ul{(p+1)p(p+1)}$
\item in $w$ we exchange $\ol{p(p+ 1)p}$ and $\ol{(p+1)p(p+1)}$
\item  in $w$ we exchange $\ul{p}p(p+1)$ and  $p(p+1)\ul{p}$
\item in $z$ we exchange $\ol{p}(p+1)p$ and $(p+1)p \ol{p}$ where $f(w)=z$
\end{enumerate}
\end{enumerate}
There is some intuition behind the choice of moves given above. We allow all possible commutation moves as long as the commutation move doesn't bring us outside the set $V_{\alpha,\beta}$. We allow all possible long braid moves that occur purely on the letters in $\ul{[a-1]}$ or in $\ol{[b-1]}$ that keep us in the set $V_{\alpha,\beta}$, and we have two new kinds of long braid moves, 2(c) and 2(d), that also keep us in the set $V_{\alpha,\beta}$.  See Figure~\ref{fig:21-graph} for an example. 

We can now define the map that will be a bijection between $R(\pi)$ for $\pi=21[\alpha,\beta]$ and $V_{\alpha,\beta}$,
$$\psi:V_{\alpha,\beta}\rightarrow [a+b-1]^{\ell(\pi)}.$$
For now, it will not be clear why the image of $\psi$ is actually $R(\pi)$. This will become apparent later when we show $H_{\alpha,\beta}$ is isomorphic to $G_{\pi}$.  For now it will suffice to know that the outputs are in $[a+b-1]^{\ell(\pi)}$. 
Given $w\in \ol{\Shuff}(x,u,v)$, and $f(w)=z$ since we will need $z$ to compute $\psi(w)$ in some cases. We define $\psi(w)=r$ by 
$$r_i=\left\{
\begin{array}{ll}
j+b-N_j(w^{(i-1)})-1=k+N_k(z^{(i-1)})&\text{if } w_i={j} \text{ or } z_i=k\\
j+b-N_j(w^{(i-1)})&\text{if } w_i=\ul{j}\\
j+N_j(z^{(i-1)})&\text{if } w_i=\ol{j}.\\
\end{array}
\right.
$$

\begin{exmp}Given $\alpha=312$ and $\beta=21$ we have 
$\psi(\ul{21}\ol{1}112323)=431 213423$ and 
$\psi(1\ul{2}1\ol{1}232\ul{1}3)=241234213$. 
\end{exmp}

\begin{figure}
\label{fig:21-graph}
\begin{center}
\begin{tikzpicture}[every text node part/.style={align=center}, scale = .5,every node/.style={scale=0.6}]

\coordinate (A) at (0,2);
\coordinate (B) at (3,2);
\coordinate (C) at (3,0);
\coordinate (D) at (6,0);
\coordinate (E) at (6,2);

\coordinate (F) at (9,2);
\coordinate (G) at (9,0);

\coordinate (H) at (12,0);
\coordinate (I) at (12,2);

\coordinate (J) at (15,0);
\coordinate (K) at (15,2);

\coordinate (L) at (18,0);
\coordinate (M) at (18,2);

\coordinate (N) at (21,0);

\draw (A) --(B) -- (C) --(D) --(E) --(B);
\draw[double] (E)--(F);
\draw[double] (D)--(G);
\draw (F)--(G);
\draw[double] (H)--(G);
\draw (H)--(I);
\draw[double] (H)--(J);
\draw[double] (I)--(K);
\draw (J)--(K)--(M)--(L)--(J); 
\draw (L)--(N);

\draw[draw = black, fill = white] (A) ellipse (1.25 and .75);
\draw (A) node {$111222\tg{\ul{1}}$ \\ $3214321$};
\draw[draw = black, fill = white] (B) ellipse (1.25 and .75);
\draw (B) node {$112122\tg{\ul{1}}$ \\ $3241321$};
\draw[draw = black, fill = white] (C) ellipse (1.25 and .75);
\draw (C) node {$121122\tg{\ul{1}}$ \\ $3421321$};
\draw[draw = black, fill = white] (E) ellipse (1.25 and .75);
\draw (E) node {$112212\tg{\ul{1}}$ \\ $3243121$};
\draw[draw = black, fill = white] (D) ellipse (1.25 and .75);
\draw (D) node {$121212\tg{\ul{1}}$ \\ $3423121$};

\draw[draw = black, fill = white] (F) ellipse (1.25 and .75);
\draw (F) node {$1122\tg{\ul{1}}12$ \\ $3243212$};
\draw[draw = black, fill = white] (G) ellipse (1.25 and .75);
\draw (G) node {$1212\tg{\ul{1}}12$ \\ $3423212$};

\draw[draw = black, fill = white] (H) ellipse (1.25 and .75);
\draw (H) node {$12\tg{\ul{1}}1212$ \\ $3432312$};
\draw[draw = black, fill = white] (I) ellipse (1.25 and .75);
\draw (I) node {$12\tg{\ul{1}}1122$ \\ $3432132$};

\draw[draw = black, fill = white] (J) ellipse (1.25 and .75);
\draw (J) node {$\tg{\ul{1}}121212$ \\ $4342312$};
\draw[draw = black, fill = white] (K) ellipse (1.25 and .75);
\draw (K) node {$\tg{\ul{1}}121122$ \\ $4342132$};

\draw[draw = black, fill = white] (L) ellipse (1.25 and .75);
\draw (L) node {$\tg{\ul{1}}112212$ \\ $4324132$};
\draw[draw = black, fill = white] (M) ellipse (1.25 and .75);
\draw (M) node {$\tg{\ul{1}}112122$ \\ $4324312$};

\draw[draw = black, fill = white] (N) ellipse (1.25 and .75);
\draw (N) node {$\tg{\ul{1}}111222$ \\ $4321432$};
\end{tikzpicture}
\end{center}
\caption{This is the graph $G_{\pi}$ of the 21-inflation $\pi=21[21,123]=54123$ with vertices given in both $V_{21,123}$ and $R(\pi)$.}
\end{figure}
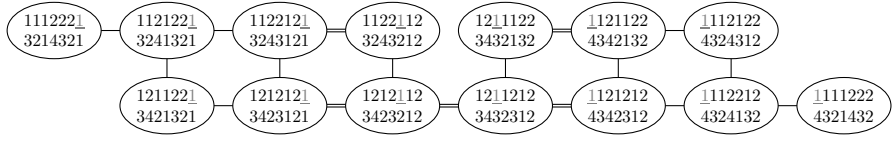

We will show that the commutation moves and braid moves we have defined on $V_{\alpha,\beta}$ match with the possible commutation and braid moves of their associated outputs. 
\begin{lem}
\label{lem:21-edge_iff}
The map $\psi:V_{\alpha,\beta}\rightarrow [a+b-1]^{\ell(\pi)}$  is injective. Also, if $\psi(w)=r$ with $f(w)=z$ then we can describe exactly the commutation and braid moves of $r$ with conditions on $w$ or $f(w)=z$.
\begin{enumerate}
\item We can perform a commutation move on $r_ir_{i+1}$ in $r$ if and only if
\begin{enumerate}
\item $w_iw_{i+1}=\ul{pq}$ if $|p-q|>1$
\item $w_iw_{i+1}=\ol{pq}$ if $|p-q|>1$
\item $w_iw_{i+1}=pq$ if either $p>q$ or $p<q$ and $N_p(w^{(i-1)})>N_q(w^{(i-1)})$
\item  $w_iw_{i+1}=\ul{p}\ol{q}$ or $w_iw_{i+1}=\ol{q}\ul{p}$ for any $p$ and $q$
\item $w_iw_{i+1}=\ul{p}q$ or $w_iw_{i+1}=q\ul{p}$ if $|p-q|>1$ or $p>q$
\item $z_iz_{i+1}=\ol{p}q$ or $z_iz_{i+1}=q\ol{p}$ if $|p-q|>1$ or $q<p$. 
\end{enumerate}
\item We can perform a long braid move on $r_ir_{i+1}r_{i+2}$ in $r$ if and only if
\begin{enumerate}
\item $w_iw_{i+1}w_{i+2}=\ul{p(p\pm 1)p}$
\item  $w_iw_{i+1}w_{i+2}=\ol{p(p\pm 1)p}$
\item either $w_iw_{i+1}w_{i+2}$ equals $\ul{p}p(p+1)$ or $p(p+1)\ul{p}$
\item either $z_iz_{i+1}z_{i+2}$ equals  $\ol{p}(p+1)p$ or $(p+1)p \ol{p}$
\end{enumerate}
\end{enumerate}
\end{lem}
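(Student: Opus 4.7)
The plan is to mirror the proof strategy of Lemma~\ref{lem:12_edge_iff}, but with substantially more case analysis since $V_{\alpha,\beta}$ involves three types of letters (plain ballot letters, underlined, and overlined) and the output $r_i$ depends on prefix counts rather than just on $w_i$. My first step is to record a couple of useful range estimates. For the plain case $w_i = j$ the ballot property of the projection onto $[a]$ guarantees $0 \leq N_j(w^{(i-1)}) < b$, so $r_i \in \{j, j+1, \ldots, j+b-1\}$; for $w_i = \underline{j}$ the local condition $N_j(w^{(i-1)}) = N_{j+1}(w^{(i-1)})$ combined with the ballot bound $N_j(w^{(i-1)}) \leq b$ gives $r_i \in \{j+1,\ldots,j+b\}$; and for $w_i = \overline{j}$ the analogous reverse ballot condition on $z = f(w)$ yields $r_i \in \{j, \ldots, j+a-1\}$ with a similar gap structure.

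For injectivity I would argue by induction on $i$. Suppose $\psi(w) = \psi(w') = r$ and the two differ first at position $i$. The prefix equality $w^{(i-1)} = {w'}^{(i-1)}$ (obtained inductively) ensures all counts $N_j(w^{(i-1)})$ and $N_j(z^{(i-1)})$ are fixed, so the value $r_i$ together with the type of $w_i$ determines $w_i$ uniquely via the three formulas. It thus suffices to show that the three formulas have disjoint outputs once the prefix and the specific ballot constraints are fixed at each step; this is where I would use the range estimates above and the fact that an underlined (resp. overlined) letter forces a specific count equality that a plain letter at the same position cannot satisfy.

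For part (1), I would verify each of (1a)–(1f) by direct computation, since these are exactly the cases where $|r_i - r_{i+1}| > 1$. For instance, if $w_i w_{i+1} = \underline{p} q$ with $p > q$ (or $|p-q|>1$) then plugging into the definitions gives $r_i - r_{i+1} = (p - q) + 1 + N_q(w^{(i-1)}) - N_p(w^{(i-1)})$, and I would check that under the ballot constraints required for the word to lie in $V_{\alpha,\beta}$ this quantity has absolute value at least $2$. The converse, that no other configuration of $w_i w_{i+1}$ produces a commutation pair in $r$, follows from a finite case check on letter-type pairs (nine combinations of plain/underlined/overlined) together with the constraint that consecutive plain letters $pq$ with $p < q$ and $N_p(w^{(i-1)}) = N_q(w^{(i-1)})$ would force $r_i = r_{i+1} + 1$.

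For part (2), I would treat the four braid configurations (2a)–(2d) in turn, using the same substitution bookkeeping extended to three consecutive positions; the non-obvious cases are (2c) and (2d), where the braid pattern mixes a decorated letter with plain ballot letters, and where one must use the defining constraints of $V_{\alpha,\beta}$ to pin down the counts on both sides of the triple. The converse direction again reduces to a finite case analysis: out of all triples of letter types, I would rule out everything except (2a)–(2d) by showing that any other configuration produces either a commutation pair (ruled out by part (1)) or a triple $r_i r_{i+1} r_{i+2}$ that is not of the form $p(p\pm 1)p$. The main obstacle is keeping the count bookkeeping straight across mixed-type triples, especially reconciling $N_j(w^{(i-1)})$ with $N_j(z^{(i-1)})$ through the bijection $f$; once the identity $N_j(z^{(i-1)}) + N_j(w^{(i-1)}) = $ (number of positions $k \leq i-1$ with $w_k \in \{j, j+1\}$ if plain) is established as a bookkeeping lemma, the remaining computations are routine.
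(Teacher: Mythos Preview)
Your outline follows essentially the same architecture as the paper: a case analysis over the nine possible type pairs for $(w_i,w_{i+1})$ to determine exactly when $|r_i-r_{i+1}|>1$, then a further combination of those cases to isolate the triples giving $r_ir_{i+1}r_{i+2}=J(J\pm1)J$, and finally an injectivity argument by looking at the first index where two putative preimages differ. The paper carries this out in detail and tabulates the results, so your plan is on target.

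One concrete correction: your proposed ``bookkeeping lemma'' relating $N_j(w^{(i-1)})$ and $N_j(z^{(i-1)})$ is not right as stated. The correct relationship, and the one the paper exploits in the mixed $\ul{j}\,\ol{k}$ case, is that the counts $\lambda_j:=N_j(w^{(i-1)})$ form an integer partition $\lambda$, and the reverse-ballot counts are parts of its \emph{conjugate}: $N_k(z^{(i-1)})=\lambda'_{\,b-k+1}$. With this in hand, the constraints $\lambda_j=\lambda_{j+1}$ (from $w_i=\ul{j}$) and $\lambda'_{b-k}=\lambda'_{b-k+1}$ (from $w_{i+1}=\ol{k}$) force $\lambda_j\neq b-k$, and a short dichotomy on whether $\lambda_j<b-k$ or $\lambda_j>b-k$ gives $|r_i-r_{i+1}|>1$. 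Your range estimates alone are not sharp enough to separate the outputs in this case without this conjugation identity, so you should replace the bookkeeping lemma with the partition/conjugate observation.
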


\begin{proof}
Let $w\in V_{\alpha,\beta}$, $\psi(w)=r$ and $f(w)=z$. In the first part of this proof we will investigate the output $r_ir_{i+1}$ for different choices of $w_iw_{i+1}$. The cases are recorded in Table~\ref{tab:21-cases} for reference. Looking at this table we can confirm part 1 of this lemma. 

{\bf Case 1:} We have  $w_iw_{i+1}=\ul{jk}$ for some $j$ and $k$. This means 
$$r_ir_{i+1}=(j+b-N_j(w^{(i-1)}))(k+b-N_k(w^{(i)})).$$ 
Notice that $j\neq k$ because $jk$ is an adjacent subsequence for some reduced word of $\alpha$. We know that if $j<k$ then $N_j(w^{(i)})\geq N_k(w^{(i)})$ and if $j>k$ then $N_j(w^{(i)})\leq N_k(w^{(i)})$.   Because $N_j(w^{(i-1)})=N_j(w^{(i)})$ we can conclude that if $j<k$ then $r_i<r_{i+1}$ and if $j>k$ then $r_i>r_{i+1}$. Further because $N_j(w^{(i-1)})=N_{j+1}(w^{(i-1)})$ and $N_k(w^{(i)})=N_{k+1}(w^{(i)})$ we can conclude that $|j-k|=1$ if and only if $|r_i-r_{i+1}|=1$. 

{\bf Case 2:} We have  $w_iw_{i+1}=\ol{jk}$ for some $j$ and $k$. This means 
$$r_ir_{i+1}=(j+N_j(f(w)^{(i-1)}))(k+N_k(f(w)^{(i)})).$$ 
Notice that $j\neq k$ because $jk$ is an adjacent subsequence for some reduced word of $\beta$. We know that if $j<k$ then $N_j(f(w)^{(i)})\leq N_k(f(w)^{(i)})$ and if $j>k$ then $N_j(f(w)^{(i)})\geq N_k(f(w)^{(i)})$.   Because $N_j(f(w)^{(i-1)})=N_j(f(w)^{(i)})$ we can conclude that if $j<k$ then $r_i<r_{i+1}$ and if $j>k$ then $r_i>r_{i+1}$. Further because $N_j(f(w)^{(i-1)})=N_{j+1}(f(w)^{(i-1)})$ and $N_k(f(w)^{(i)})=N_{k+1}(f(w)^{(i)})$ we can conclude that $|j-k|=1$ if and only if $|r_i-r_{i+1}|=1$.

{\bf Case 3:} We have  $w_iw_{i+1}={jk}$ for some $j$ and $k$. This means 
$$r_ir_{i+1}=(j+b-N_j(w^{(i-1)})-1)(k+b-N_k(w^{(i)})-1).$$ 
If $j=k$ then $N_j(w^{(i-1)})=N_k(w^{(i)})-1$ so $r_ir_{i+1}=J(J-1)$ for some $J$. If $j<k$ then because 
$N_j(w^{(i-1)})\geq N_k(w^{(i-1)})=N_k(w^{(i)})$ we have that $r_i<r_{i+1}$. In this case where $j<k$ if 
$N_j(w^{(i-1)})=N_k(w^{(i-1)})$, then $|r_i-r_{i+1}|=1$. If instead $N_j(w^{(i-1)})>N_k(w^{(i-1)})$, then $|r_i-r_{i+1}|>1$.
 If $j>k$ then $N_j(w^{(i-1)})+1=N_j(w^{(i)})\leq N_k(w^{(i)})$ which implies that $r_i>r_{i+1}$ and $|r_i-r_{i+1}|>1$.

{\bf Case 4:} We have  $w_iw_{i+1}$ equals $\ul{j}\ol{k}$ or $\ol{k}\ul{j}$ for some $j$ and $k$.  In either case  we have 
$$\ul{j}\mapsto j+b-N_j(w^{(i-1)})=J\text{ and } \ol{k}\mapsto k+N_k(f(w)^{(i-1)})=K.$$ 
The values $N_1(w^{(i-1)})\geq N_2(w^{(i-1)})\geq \cdots$ form an integer partition $\lambda=\lambda_1\lambda_2\cdots$ with $\lambda_j=N_j(w^{(i-1)})$. Further the value $N_k(f(w)^{(i-1)})$ can be determined by a part of the complement $\lambda'$ of $\lambda$, specifically $N_k(f(w)^{(i-1)})=\lambda'_{b-k+1}$. Because of the structure of $w$ we know that $N_j(w^{(i-1)})=N_{j+1}(w^{(i-1)})$ and $N_k(f(w)^{(i-1)})=N_{k+1}(f(w)^{(i-1)})$, so $\lambda_j=\lambda_{j+1}$ and $\lambda'_{b-k-1}=\lambda'_{b-k}$. This means that $\lambda_j\neq b-k$. Consider the case where $\lambda_j<b-k$ then $\lambda'_{b-k}>j$. Thus, $J>j+k$ and $K<j+k$. If instead $\lambda_j>b-k$ then $\lambda'_{b-k}<j$, so $J<j+k$ and $K>j+k$. This means that $|J-K|>1$ in all cases.

{\bf Case 5:} We have  $w_iw_{i+1}$ equals $\ul{j}{k}$ or $k\ul{j}$ for some $j$ and $k$. Because of the conditions on $w$ we can not have the case where $j\ul{j}$ or $\ul{j}(j+1)$.  In either case  we have 
$$\ul{j}\mapsto j+b-N_j(w^{(i-1)})=J\text{ and } k\mapsto k+b-N_k(w^{(i-1)})-1=K.$$ 
If $j=k$ then $w_iw_{i+1}=\ul{j}j\mapsto J(J-1)$. If $j<k$ then $N_j(w^{(i-1)})\geq N_k(w^{(i-1)})$. Because $N_k(w^{(i-1)})=N_k(w^{(i+1)})-1$ we actually have $N_j(w^{(i-1)})> N_k(w^{(i-1)})$,  so $J< K$.  In this case where $j<k$ if $k=j+1$, then $|J-K|=1$, which means that $w_iw_{i+1}=(j+1)\ul{j}$. If instead $k>j+1$, then $|J-K|>1$.
 Now suppose that $j>k$ so $N_j(w^{(i-1)})\leq N_k(w^{(i-1)})$, which quickly implies that $J>K$ and $|J-K|>1$.

{\bf Case 6:} We have  $w_iw_{i+1}$ equals $\ol{j}{k'}$ or $k'\ol{j}$ for some $j$ and $k'$. This is equivalent to the case where $z_iz_{i+1}$ equals $\ol{j}{k}$ or $k\ol{j}$ for some $j$ and $k$. 
Because of the conditions on $z$ we can not have the case where $\ol{j}j$ or $(j+1)\ol{j}$. First consider the case where $j=k$ so $z_iz_{i+1}=j\ol{j}\mapsto J(J+1)$ for some $J$. In any other case $j\neq k$ and   we have 
$$\ol{j}\mapsto j+N_j(z^{(i-1)})=J\text{ and } k\mapsto k+N_k(z^{(i-1)})=K.$$ 
 If $j<k$ then $N_j(z^{(i-1)})\leq N_k(z^{(i-1)})$. This implies $J<K$. In this case where $j<k$ if $k=j+1$, then $|J-K|=1$, which implies that $z_iz_{i+1}=\ol{j}(j+1)\mapsto J(J+1)$. If instead $k>j+1$, then $|J-K|>1$. 
  Now consider when $j>k$, then $N_j(z^{(i-1)})\geq N_k(z^{(i-1)})$. Specifically because $N_k(z^{(i+1)})=N_k(z^{(i-1)})+1$ we have $N_j(z^{(i-1)})> N_k(z^{(i-1)})$. Hence, $J>K$ and $|J-K|>1$. 
 

{\bf Part 2:}  From the above arguments $w_iw_{i+1}$ or $z_iz_{i+1}$ maps to $r_ir_{i+1}=J(J+1)$ under the following circumstances: $w_iw_{i+1}$ equals $\ul{j(j+1)}$, $\ol{j(j+1)}$ or  ${j(j+1)}$ where $N_j(w^{(i-1)})=N_{j+1}(w^{(i-1)})$ or $z_iz_{i+1}$ equals $\ol{j}(j+1)$ or $j\ol{j}$. Also, from the above arguments $w_iw_{i+1}$ maps to $r_ir_{i+1}=J(J-1)$ under the following circumstances: $w_iw_{i+1}$ equals $\ul{j(j-1)}$, $\ol{j(j-1)}$,  ${jj}$,   $\ul{j}j$ or $j\ul{(j-1)}$. Considering all possible combinations to get $r_ir_{i+1}r_{i+2}=p(p\pm 1)p$, we will find only four do not give us a contradiction. We only do not get a contradiction for the cases described in 2(abcd) of the statement.  

{\bf Injective:} Finally, we will show that $\psi$ is injective. Suppose that $\psi(w)=\psi(w')=r$ and $w\neq w'$. This means that there exists an $i$ with $w^{(i-1)}=w'^{(i-1)}$ and $w_i\neq w'_i$. If $w_i$ and $w'_i$ are both letters in $\ul{[a-1]}$, then we have a contradiction. By our map $w_i\neq w'_i$ would imply two different outputs for the $i$th letter in $r$.  We would similarly have a contradiction if  $w_i\neq w'_i$ are both in $\ol{[b-1]}$ or ${[a]}$. If one of $w_i\neq w'_i$ is in $\ul{[a-1]}$ and the other is in $\ol{[b-1]}$ then we also have a contradiction. This is for the following reason. By the argument in Case 4 if there are adjacent $\ul{j}$ and $\ol{k}$ in $w$, then their outputs under $\psi$ differ by at least two. This also implies that the output of $\ul{j}$ in $w^{(i-1)}\ul{j}$ and the output of $\ol{k}$ in $w^{(i-1)}\ol{k}$ differ by at least two for any $j$ and $k$. 
Say $w_i=\ul{j}$ and $w'_i=k$, then $r_i=j+b-N_j(w^{(i-1)})=k+b-N_k(w^{(i-1)})-1$. If $j=k$, then the outputs of $w_i$ and $w'_i$ must be different. If $j>k$ then $N_j(w^{(i-1)})\leq N_k(w^{(i-1)})$ and we have a contradiction.  If $j<k$ then $N_j(w^{(i-1)})\geq N_k(w^{(i-1)})$ and we must have that $k=j+1$. Because of our conditions on $w$ if $w_i=j$ then $N_j(w^{(i-1)})\geq N_{j+1}(w^{(i-1)})$, which means it is impossible for $w'=j+1$. 
Finally consider the case where  $w_i=\ol{j}$ and $w'_i=k'$ or equivalently $z_i=\ol{j}$ and $z'_i=k$, then $r_i=j+N_j(z^{(i-1)})=k+N_k(z^{(i-1)})$. Suppose $j=k$. Because $z_i=\ol{j}$ we know that $N_j(z^{(i-1)})= N_{j+1}(z^{(i-1)})$, which makes it impossible for $z_i'=j$. If $j>k$ then $N_j(z^{(i-1)})\geq N_k(z^{(i-1)})$ implying that $z_i$ and $z'_i$ have two different outputs. We get a similar contradiction if $j<k$. Hence, in all cases we have proven we have a contradiction so $\psi$ must be injective. 
\end{proof}

\def\arraystretch{1.3}
\begin{table}

\begin{center}
\begin{tabular}{|c|c|c|}
\hline
Input $w_iw_{i+1}$&Output of $\psi$&\\
\hline
\hline
$\ol{jk}$&$JK$ & $j-k=J-K$\\
\hline
\hline
$\ul{jk}$&$JK$ & $j-k=J-K$\\
\hline
\hline
${jj}$  &$J(J-1)$ &    \\
\hline
${j(j+1)}$  &$J(J+1)$ &  $N_j(w^{(i-1)}) = N_{j+1}(w^{(i-1)})$  \\
\hline
${j(j+1)}$  &$JK$ & $J<K-1$ and $N_j(w^{(i-1)}) > N_{j+1}(w^{(i-1)})$  \\
\hline
${jk}$  &$JK$ &  $j>k$ and $J-1>K$  \\
\hline
\hline
$\ul{j}\ol{k}$ or $\ol{k}\ul{j}$  &$JK$ &  $|J-K|>1$  \\
\hline
\hline
$\ul{j}j$   &$J(J-1)$ &  \\
\hline
$(j+1)\ul{j}$   &$(J+1)J$ &  \\
\hline
$\ul{j}k$   or $k\ul{j}$   &$JK$ or $KJ$ & $j<k-1$ and $J<K-1$ \\
\hline
$\ul{j}k$   or $k\ul{j}$   &$JK$ or $KJ$ & $j>k$ and $J-1>K$ \\
\hline
\hline
$z_iz_{i+1}=j\ol{j}$   &$J(J+1)$ &  \\
\hline
$z_iz_{i+1}=\ol{j}(j+1)$   &$J(J+1)$ &  \\
\hline
$z_iz_{i+1}$ equals $\ol{j}k$ or $k\ol{j}$   &$JK$ or $KJ$ & $j<k-1$ and $J<K-1$  \\
\hline
$z_iz_{i+1}$ equals $\ol{j}k$ or $k\ol{j}$   &$JK$ or $KJ$ & $j>k$ and $J-1>K$  \\
\hline
\end{tabular}
\end{center}
\caption{This is the table of all cases of inputs and their associated outputs for $\psi$.}
\label{tab:21-cases}
\end{table}

We finally prove that the vertices $R(21[\alpha,\beta])$ are in bijection with $V_{\alpha,\beta}$ in the following theorem, which will help us easily construct paths in $G_{\pi}$.

\begin{thm}
\label{thm:21-graph-iso}
Let $\pi=21[\alpha,\beta]$, $|\alpha|=a$ and $|\beta|=b$. 
The map $\psi$ is a bijection between $V_{\alpha,\beta}$ and $R(\pi)$ and proves that $H_{\alpha,\beta}$ is isomorphic to $G_{\pi}$. 
\end{thm}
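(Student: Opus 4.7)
The plan is to mirror the strategy used to prove Theorem~\ref{thm:12_graph_iso}, adjusted to accommodate the new ballot-sequence component. Lemma~\ref{lem:21-edge_iff} has already established injectivity of $\psi$ together with an iff characterization of commutation and long braid moves; consequently, if we equip the image $\psi(V_{\alpha,\beta})\subseteq [a+b-1]^{\ell(\pi)}$ with the graph structure coming from commutation and long braid moves, then $\psi$ is a graph isomorphism from $H_{\alpha,\beta}$ onto this image. Because the iff is bidirectional, every edge of $G_\pi$ incident to a vertex of $\psi(V_{\alpha,\beta})$ lifts to an edge of $H_{\alpha,\beta}$, so the $\psi$-image of any connected component of $H_{\alpha,\beta}$ is a union of components of $G_\pi$. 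The theorem therefore reduces to two tasks: (i) produce a single $\tilde w\in V_{\alpha,\beta}$ with $\psi(\tilde w)\in R(\pi)$, and (ii) show $H_{\alpha,\beta}$ is connected. Given (i), Tits' theorem (so $G_\pi$ is connected) forces the image of the component of $\tilde w$ to be all of $R(\pi)$; then (ii) upgrades this to $\psi(V_{\alpha,\beta})=R(\pi)$ and the desired isomorphism.

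For (i), I would pick any $\tilde u \in \ul{R}(\alpha)$ and $\tilde v \in \ol{R}(\beta)$ and use the ballot sequence $\tilde x = 1^b 2^b\cdots a^b$, setting $\tilde w = \tilde u\tilde v\tilde x$. Since every plain letter occurs after every underlined and overlined letter, the defining conditions of $V_{\alpha,\beta}$ are trivially satisfied: at each $\ul{j}$ or $\ol{j}$, the relevant $N$-values are both zero, and $\tilde x$ is manifestly a ballot sequence. A direct computation from the formula for $\psi$ shows that $\psi(\tilde w)$ is the concatenation of $\tilde u$ shifted up by $b$, the word $\tilde v$ unchanged, and the standard reduced word $b(b-1)\cdots 1\cdot (b+1)b\cdots 2\cdots (a+b-1)(a+b-2)\cdots a$ for the block-swap permutation $\rho_{a,b}:=21[\iota_a,\iota_b]$. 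These three portions perform $\alpha$ on the upper block, $\beta$ on the lower block, and then swap the blocks, so they combine into a reduced word for $\pi=21[\alpha,\beta]$.

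For (ii), given an arbitrary $w\in V_{\alpha,\beta}$, the plan is to build a path in $H_{\alpha,\beta}$ from $w$ to $\tilde w$ in three phases. In the first phase, use the commutations 1(d)--(f) and the braid moves 2(c), 2(d) to migrate every underlined letter to the front of the word and every overlined letter into the position immediately following the underlined block, producing a word of the form $u'v'x'$ with $u'\in\ul{R}(\alpha)$, $v'\in\ol{R}(\beta)$, $x'\in\Ballot_{a,b}$. In the second phase, because the moves 1(a), 1(b), 2(a), 2(b) coincide with the moves of $G_\alpha$ and $G_\beta$ acting only on underlined or overlined letters, Tits' theorem transforms $u'$ into $\tilde u$ and $v'$ into $\tilde v$. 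In the third phase, we connect $x'$ to $\tilde x$ inside $\Ballot_{a,b}$ using commutations of type 1(c); connectivity of $\Ballot_{a,b}$ under these commutations is the present theorem specialized to $\alpha=\iota_a,\beta=\iota_b$, and follows from step (i) for that case together with Tits' theorem applied to $G_{\rho_{a,b}}$ (since then $H_{\iota_a,\iota_b}$ is just the commutation graph on $\Ballot_{a,b}$).

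The main obstacle will be the first phase: pure commutation cannot slide $\ul{p}$ past a plain $p$ or $p+1$, and only the braid 2(c) can clear the patterns $p(p+1)\ul{p}$ and $\ul{p}p(p+1)$, with a symmetric picture under $f$ for $\ol{p}$ using 2(d). The hard part is to argue that whenever an underlined letter $\ul{p}$ meets a blocking plain letter, the insertion condition $N_p(w^{(i-1)})=N_{p+1}(w^{(i-1)})$ defining $V_{\alpha,\beta}$, combined with the ballot property of the plain subsequence, forces the needed $p(p+1)$ neighborhood to be accessible (perhaps after cheap commutations of unrelated letters), so the braid can fire. Carrying this through by induction on the number of blocking plain letters to the left of each underlined or overlined symbol is where the bulk of the work will go.
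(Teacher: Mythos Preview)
Your overall structure matches the paper's exactly: reduce via Lemma~\ref{lem:21-edge_iff} to (i) exhibiting one $\tilde w$ with $\psi(\tilde w)\in R(\pi)$ and (ii) connectivity of $H_{\alpha,\beta}$, with the same $\tilde w=\ul{\tilde u}\,\ol{\tilde v}\,\tilde x$ for $\tilde x=1^b2^b\cdots a^b$.

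Your Phase~3, however, is circular. You claim that connectivity of $\Ballot_{a,b}$ ``follows from step~(i) for that case together with Tits' theorem applied to $G_{\rho_{a,b}}$,'' but step~(i), Tits, and the bidirectional iff of Lemma~\ref{lem:21-edge_iff} only show that the component of $\tilde x$ in $H_{\iota_a,\iota_b}$ maps onto $R(\rho_{a,b})$; they do not show this component exhausts $\Ballot_{a,b}$. That last statement is exactly your step~(ii) for the special case $\alpha=\iota_a$, $\beta=\iota_b$, and in that case Phases~1 and~2 are vacuous while Phase~3 \emph{is} the connectivity of $\Ballot_{a,b}$---so you are invoking what you are trying to prove. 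The paper avoids this with a direct one-line argument: from any $y\in\Ballot_{a,b}$ use commutations 1(c) with $p>q$ to slide all $1$'s to the front, then all $2$'s, and so on, reaching $1^b2^b\cdots a^b$. (Alternatively a counting argument $|\Ballot_{a,b}|=|R(\rho_{a,b})|$ would close the gap, but the direct argument is cheaper.)

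For Phase~1 your instinct is right, and the paper's explicit algorithm makes your ``accessible $p(p{+}1)$ neighborhood'' precise. Processing the leftmost remaining $\ul{j}$, it first sorts the plain prefix by commutations to $1^{m_1}2^{m_2}\cdots a^{m_a}$; the insertion condition on $\ul{j}$ forces $m_j=m_{j+1}$, so the block $j^{m_j}(j{+}1)^{m_j}$ can be interleaved to $(j(j{+}1))^{m_j}$. Then $\ul{j}$ commutes left past the tail $(j{+}2)^{m_{j+2}}\cdots a^{m_a}$, braids through each $j(j{+}1)$ via move~2(c), and commutes past $1^{m_1}\cdots(j{-}1)^{m_{j-1}}$ and the $\ol{v}$-block. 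The overlined case is the mirror image after applying $f$. The key maneuver your sketch was missing is this preliminary sort of the prefix, which guarantees the braid pattern is always in place rather than merely accessible after unspecified rearrangements.
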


\begin{proof}
We have already shown that $\psi$ is injective in Lemma~\ref{lem:21-edge_iff}. If we form a graph on the image of $\psi$ by connecting vertices by commutation and long braid moves, Lemma~\ref{lem:21-edge_iff} proves that we get a graph that is isomorphic to $H_{\alpha,\beta}$. Call the graph on the image $I$. We will only have to show two things to prove $H_{\alpha,\beta}$ is isomorphic to $G_{\pi}$, so also $\psi$ is a bijection between $V_{\alpha,\beta}$ and $R(\pi)$. The first thing is that we will find some specific $\tilde w$ that maps to $\tilde r$ that is in $R(\pi)$. Because $G_{\pi}$ is a connected graph that can be generated by one single vertex $\tilde r$ by using commutation and long braid moves, we can conclude that one connected component of $I$ is isomorphic to $G_{\pi}$. The second thing we will show is that $H_{\alpha,\beta}$ is connected, which proves that $H_{\alpha,\beta}$ is isomorphic to $G_{\pi}$. 

First we will show there exists some $\tilde w\in V_{\alpha,\beta}$ where  $\psi(\tilde w)=\tilde r$  is in $R(\pi)$. Let $\tilde u\in R(\alpha)$, $\tilde v\in R(\beta)$ and 
$\tilde x=1^b2^b\ldots a^b\in \Ballot_{a,b}$. Let $\tilde w$ be the concatenation $\ul{\tilde u}\ol{\tilde v} \tilde x$, which is in $V_{\alpha,\beta}$. Let 
$$\tilde r = (\tilde u_1+b)(\tilde u_2+b)\ldots (\tilde u_{\ell(\alpha)}+b)
\tilde v_1\tilde v_2\ldots \tilde v_{\ell(\beta)}
b(b-1)\ldots 1 (b+1)b\ldots 2 \ldots (a+b-1)(a+b-2)\ldots a.$$
We can see that $\tilde r$ is in $R(\pi)$ by seeing the tranformation on the identity $\iota_{a+b}$. After applying the first part,  $(\tilde u_1+b)(\tilde u_2+b)\ldots (\tilde u_{\ell(\alpha)}+b)$, the identity becomes $12\ldots b (\alpha_1+b)(\alpha_2+b)\ldots (\alpha_a+b)$. After applying $\tilde v_1\tilde v_2\ldots \tilde v_{\ell(\beta)}$ we get $\beta_1\beta_2\ldots \beta_b (\alpha_1+b)(\alpha_2+b)\ldots (\alpha_a+b)$. The last portion of $\tilde r$ switches the $\alpha$ and $\beta$ subsequences and we get $(\alpha_1+b)(\alpha_2+b)\ldots (\alpha_a+b)\beta_1\beta_2\ldots \beta_b =\pi$. By applying the map $\psi$ we can show that $\psi(\tilde w)=\tilde r$. 


Next we will show that $G_{\alpha,\beta}$ is connected by proving that  from all $w\in V_{\alpha,\beta}$ we can describe a path to $\tilde w$. Let $w\in V_{\alpha,\beta}$, so $w$ is formed from a shuffle of some $u\in \ul{R}(\alpha)$, $v\in \ol{R}(\beta)$ and $x\in \Ballot_{a,b}$. We claim for now, and will prove later, that there is a path from $w$ to $uvy$ for some $y\in \Ballot_{a,b}$. Because $G_{\alpha}$ is connected we can perform moves to change $u$ to $\ul{\tilde u}$, and because $G_{\beta}$ is connected we can perform moves to change $v$ to $\ol{\tilde v}$. By the commutation rules given earlier for $V_{\alpha,\beta}$ we can transform $y$ to $\tilde x$ by performing commutation moves that move the  1's to the front, then the 2's, then the 3's and so on. If we can prove our claim then we have proven that $G_{\alpha,\beta}$ is connected and thus is isomorphic to $G_{\pi}$. 

Now we will prove our claim that there exists a path from $w$ to $uvy$ for some $y\in \Ballot_{a,b}$. We will do so with a recursive algorithm starting with $w=u^{[0]}v^{[0]}y^{[0]}$ where $u^{[0]}$ and $v^{[0]}$ are empty and $y^{[0]}=w$. Suppose for some $i\geq 0$ that $u^{[i]}$ is a prefix of $u$, $v^{[i]}$ is a prefix of $v$ and $y^{[i]}$ is a shuffle of the remaining letters of $u$ and $v$ in the same order and some ballot sequence so that $u^{[i]}v^{[i]}y^{[i]}\in V_{\alpha,\beta}$. In each step the number of letters in $y^{[i]}$ will decrease by one as we move one occurrence of $\ul{j}$ or $\ol{j}$ left and add it to $u^{[i]}$ or $v^{[i]}$. Once $y^{[i]}$ is a ballot sequence we have achieved the goal of our claim. We will now describe how you construct $u^{[i+1]}v^{[i+1]}y^{[i+1]}$. 

\noindent {\bf Case 1:} The first letter in $y^{[i]}$ that is not in $[a]$ is $\ul{j}$. Let ${\bf y}$ be the prefix of $y$ containing all letters before $\ul{j}$. We can preform commutation moves on ${\bf y}$ to get $1^{m_1}2^{m_2}\ldots a^{m_a}$ where $m_1\geq m_2\geq \ldots$. We also have that $m_j=m_{j+1}$. Next we use commutation moves to transform the subsequence $j^{m_j}(j+1)^{m_j}$ into $(j(j+1))^{m_j}$. So we have just transformed ${\bf y}$ into $1^{m_1}2^{m_2}\ldots (j-1)^{m_{j-1}}(j(j+1))^{m_j}(j+2)^{m_{j+2}}\ldots a^{m_a}$. We can use commutation moves to move $\ul{j}$ left past $(j+2)^{m_{j+2}}\ldots a^{m_a}$, long braid moves to move $\ul{j}$ further left past $(j(j+1))^{m_j}$ and some more commutation moves to move $\ul{j}$ left past $1^{m_1}2^{m_2}\ldots (j-1)^{m_{j-1}}$. Finally, we use more commutation moves to move $\ul{j}$ left past $v^{[i]}$. This means that $u^{[i+1]}=u^{[i]}\ul{j}$, $v^{[i+1]}=v^{[i]}$ and $y^{[i+1]}$ is $y^{[i]}$ with that first occurrence of $\ul{j}$ removed and the prefix adjusted as described. 

\noindent {\bf Case 2:} The first letter in $y^{[i]}$ that is not in $[a]$ is $\ol{j}$. Let ${\bf y}$ be the prefix of $y$ of all letters before $\ol{j}$, and let ${\bf z}$ be its image under $f$. We can preform commutation moves on ${\bf z}$ to get ${b}^{m_b}(b-1)^{m_{b-1}}\ldots 1^{m_1}$ where $m_b\geq m_{b-1}\geq \ldots$. We also have that $m_j=m_{j+1}$. We can use commutation moves to transform the subsequence $(j+1)^{m_j}j^{m_j}$ into $((j+1)j)^{m_j}$. So we have just transformed ${\bf z}$ into $b^{m_b}\ldots (j+2)^{m_{j+2}}((j+1)j)^{m_j}(j-1)^{m_{j-1}}\ldots 1^{m_1}$. We can use commutation moves to move $\ol{j}$ left past $(j-1)^{m_{j-1}}\ldots 1^{m_1}$,long  braid moves to move $\ol{j}$ further left past $((j+1)j)^{m_j}$ and some more commutation moves to move $\ol{j}$ left past $b^{m_b}\ldots (j+2)^{m_{j+2}}$. This means that $u^{[i+1]}=u^{[i]}$, $v^{[i+1]}=v^{[i]}\ol{j}$ and $y^{[i+1]}$ is $y^{[i]}$ with that first occurrence of $\ol{j}$ removed and the prefix adjusted as described, but expressed as a ballot sequence, rather than a reverse ballot sequence. 
\end{proof}
\subsection{Diameters of the Graphs for Inflations $21[\alpha,\iota_b]$ and the Longest Permutations}

In this section we prove recursive bounds for the diameters of the graphs for $21$-inflations of the form $21[\alpha,\iota_b]$ for some permutation $\alpha$. The recursive bounds will give us exact formulas for the diameters of graphs for $21[\alpha,1]$. Because the longest permutation $\delta_n$ can be written as such an inflation we will have a recursion for the diameter of the graphs for $\delta_n$ as well. 

Since the inflations we are considering are a subcollection of the inflations in Subsection~\ref{subsec:21-inflation_ecoding} we can use Theorem~\ref{thm:21-graph-iso} to describe a graph isomorphic to $G_{\pi}$. 
This graph is on the collection of vertices
$$V_{\alpha,\iota_b}=\bigcup_{x\in \Ballot_{a,b}, u\in \ul{R}(\alpha)} \ol{\Shuff}(x,u,\emptyset).$$
The associated graph graph $H_{\alpha,\iota_b}$ on $V_{\alpha,\iota_b}$ will have edges from the following relations between its vertices. 
\begin{enumerate}
\item Commutation moves or relations come from exchanging adjacent 
\begin{enumerate}
\item $w_iw_{i+1}=\ul{pq}$ if $|p-q|>1$
\item $w_iw_{i+1}=pq$ if either $p>q$ or $p<q$ and $N_p(w^{(i-1)})>N_q(w^{(i-1)})$
\item $w_iw_{i+1}=\ul{p}q$ or $w_iw_{i+1}=q\ul{p}$ if $|p-q|>1$ or $p>q$ 
\end{enumerate}
\item Long braid moves or relations come exchanging the following  occurrences on consecutive indices
\begin{enumerate}
\item in $w$ we exchange $\ul{p(p+ 1)p}$ and $\ul{(p+1)p(p+1)}$
\item  in $w$ we exchange $\ul{p}p(p+1)$ and  $p(p+1)\ul{p}$
\end{enumerate}
\end{enumerate} 
We will call moves of the type 1(a) and 2(a), which happen purely on the letters in $\ul{[a-1]}$, {\it $\alpha$-moves}. We will call moves of type 1(b) {\it ballot-moves}, which happen purely on the letters in $[a]$. All the remaining moves we will call {\it shift-moves}. 

\begin{cor}
\label{cor:21-graph-iso}
The graph $H_{\alpha,\iota_b}$ is isomorphic to $G_{\pi}$ for $\pi=21[\alpha,\iota_b]$.
\end{cor}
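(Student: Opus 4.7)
The plan is to obtain this corollary as a direct specialization of Theorem~\ref{thm:21-graph-iso} to the case $\beta = \iota_b \in \fS_b$, and show that under this specialization the edge relations defining $H_{\alpha,\beta}$ collapse precisely to those listed for $H_{\alpha,\iota_b}$.

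First I would note that $\ell(\iota_b) = 0$, so $R(\iota_b) = \{\emptyset\}$ and hence $\ol{R}(\iota_b) = \{\emptyset\}$. Consequently every $w \in V_{\alpha,\iota_b}$ is a shuffle of some $u \in \ul{R}(\alpha)$ with some $x \in \Ballot_{a,b}$, and contains no letters from the overlined alphabet $\ol{[b-1]}$; equivalently, the image $z = f(w)$ also contains no overlined letters. This immediately matches the definition of $V_{\alpha,\iota_b}$ given at the start of this subsection.

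Next I would go through the general list of commutation and long braid relations defining $H_{\alpha,\beta}$ in Subsection~\ref{subsec:21-inflation_ecoding} and check which ones can possibly be applied to words in $V_{\alpha,\iota_b}$. Relations 1(b), 1(d), 1(f), 2(b), and 2(d) all require the presence of some overlined letter $\ol{p}$ in $w$ or in $z$, and are therefore vacuous on $V_{\alpha,\iota_b}$. The remaining relations 1(a), 1(c), 1(e), 2(a), and 2(c) in the general definition are precisely the relations 1(a), 1(b), 1(c), 2(a), and 2(b) used here to define $H_{\alpha,\iota_b}$.

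Finally, Theorem~\ref{thm:21-graph-iso} applied with $\beta = \iota_b$ yields the required isomorphism $H_{\alpha,\iota_b} \cong G_\pi$ where $\pi = 21[\alpha,\iota_b]$. There is no real obstacle: the corollary is a bookkeeping exercise showing that the overlined moves simply switch off in this special case, so the specialization of $H_{\alpha,\beta}$ agrees on the nose with the simpler graph $H_{\alpha,\iota_b}$ introduced above.
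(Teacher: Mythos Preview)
Your proposal is correct and follows exactly the paper's intended reasoning: the paper presents this corollary without proof, having already remarked that since these inflations are a subcollection of those in Subsection~\ref{subsec:21-inflation_ecoding}, Theorem~\ref{thm:21-graph-iso} applies directly. Your explicit verification that the overlined moves become vacuous when $\beta=\iota_b$ (since $\ol{R}(\iota_b)=\{\emptyset\}$) and that the surviving relations match the list defining $H_{\alpha,\iota_b}$ is precisely the bookkeeping the paper leaves to the reader.
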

We are now ready to prove recursions on the diameters of graphs $G_{\pi}$,  $C_{\pi}$ and  $B_{\pi}$ where $\pi=21[\alpha,\iota_b]$. It will be helpful to define a few statistics on $V_{\alpha,\iota_b}$. Given $w\in V_{\alpha,\iota_b}$ let 
\begin{align*}
\Cshift(w)&=|\{(i,i'):i<i', w_i=j, w_{i'}=\ul{k}, j\neq k, j\neq k+1\}|,\\
\Bshift(w)&=|\{(i,i'):i<i', w_i=j, w_{i'}=\ul{j}\}|\text{ and}\\
\ballot(w)&=|\{(i,i'):i<i', w_i=j, w_{i'}=k, j>k\}|.\\
\end{align*}

\begin{exmp}
Let $\alpha=321$, $b=2$ and $w=\ul{1}1123\ul{2}2\ul{1}3$ in $V_{\alpha,\iota_2}$. Then $\Cshift(w)=3$, $\Bshift(w)=3$ and $\ballot(w)=1$. 
\end{exmp}

We first describe minimal paths between certain vertices in $G_{\pi}$ in Lemma~\ref{lem:21-path1} and Lemma~\ref{lem:21-path2}. 

\begin{lem}
\label{lem:21-path1}
Let $w\in V_{\alpha,\iota_b}$ be a shuffle of $u\in \ul{R}(\alpha)$ and $x\in \Ballot_{a,b}$. Let $\tilde x = (12\ldots a)^b\in \Ballot_{a,b}$. 
\begin{enumerate}[(a)]
\item There exists a path from $w$ to $u\tilde x$ with $\Cshift(w)+\binom{a}{2}\binom{b}{2}-\ballot(w)$ commutation steps plus $\Bshift(w)$ long  braid steps. 
\item There exists a path from $w$ to $\tilde x u$ with $\ell(\alpha)b(a-2)-\Cshift(w)+\binom{a}{2}\binom{b}{2}-\ballot(w)$ commutation steps plus $\ell(\alpha)b-\Bshift(w)$ long braid steps. 
\end{enumerate}
\end{lem}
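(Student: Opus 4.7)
The plan is to build the paths explicitly and count moves by tracking three statistics---$\Cshift$, $\Bshift$, and $\ballot$---along the edges of $H_{\alpha,\iota_b}$. The key observation is that each edge move changes at most one of these statistics, and only by $\pm 1$: the $\alpha$-moves $1(a)$ and $2(a)$ act only on underlined letters and fix all three; a ballot-commutation $1(b)$ swaps two plain letters, so it changes $\ballot$ by $\pm 1$ while leaving $\Cshift$ and $\Bshift$ fixed; a shift-commutation $1(c)$ swaps an adjacent $\ul{p}$ with a plain $q$, and the conditions $|p-q|>1$ or $p>q$ rule out $q=p$ and $q=p+1$, so the flipped (plain, underlined) pair is counted exactly by $\Cshift$, giving $\Delta\Cshift=\pm 1$ and $\Delta\Bshift=\Delta\ballot=0$; and a shift-long-braid $2(b)$, exchanging $\ul{p}p(p+1)$ with $p(p+1)\ul{p}$, moves $\ul{p}$ past one plain $p$ (a $\Bshift$-pair) and one plain $p+1$ (a ``neutral'' pair counted by neither statistic) while preserving the relative order of the plain letters, so $\Delta\Bshift=\pm 1$ and $\Delta\Cshift=\Delta\ballot=0$.

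Next I would compute the endpoint statistics. At $u\tilde{x}$ no plain letter precedes any underlined one, so $\Cshift(u\tilde{x})=\Bshift(u\tilde{x})=0$, and the plain suffix $\tilde{x}=(12\cdots a)^b$ has exactly $\binom{a}{2}\binom{b}{2}$ inversions, so $\ballot(u\tilde{x})=\binom{a}{2}\binom{b}{2}$. At $\tilde{x}u$ every plain letter precedes every underlined one; for each of the $\ell(\alpha)$ underlined letters $\ul{k}$, the $b$ plain copies of $k$ contribute $b$ to $\Bshift$ and the $b(a-2)$ plain letters with value in $[a]\setminus\{k,k+1\}$ contribute $b(a-2)$ to $\Cshift$, yielding $\Bshift(\tilde{x}u)=\ell(\alpha)b$ and $\Cshift(\tilde{x}u)=\ell(\alpha)b(a-2)$, while again $\ballot(\tilde{x}u)=\binom{a}{2}\binom{b}{2}$. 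Taking signed differences from the statistics of $w$ recovers the stated counts, provided we can exhibit a path that is monotone in each of the three statistics: decreasing $\Cshift$ and $\Bshift$ and increasing $\ballot$ for part (a), and increasing all three for part (b).

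The construction I would use processes underlined letters one at a time---leftmost first for part (a), rightmost first for part (b). To slide a chosen $\ul{j}$ past its adjacent plain block, I first apply ballot-commutations to rearrange the plain letters so that a consecutive $j(j+1)$ pattern sits next to $\ul{j}$, then apply a shift-long-braid to cross it, interspersed with shift-commutations that pass $\ul{j}$ across plain letters whose value lies outside $\{j,j+1\}$. The enabling fact is the ballot identity built into $V_{\alpha,\iota_b}$: whenever $w_i=\ul{j}$ we have $N_j(w^{(i-1)})=N_{j+1}(w^{(i-1)})$, so the number of plain $j$'s preceding each $\ul{j}$ equals the number of plain $(j+1)$'s preceding it, providing enough neutral partners to pair with every $\Bshift$-pair in a single long braid. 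The main obstacle I anticipate is the book-keeping for ballot-commutations: the local rearrangements used to set up successive long braids must aggregate globally to exactly $\binom{a}{2}\binom{b}{2}-\ballot(w)$ swaps, with no wasted oscillation of $\ballot$. I would handle this by invoking the invariance of $\ballot$ under shift-moves: since only ballot-commutations alter $\ballot$ and the plain subword must end as $\tilde{x}$ in both parts, the signed count of ballot-commutations along any monotone path is forced to equal $\binom{a}{2}\binom{b}{2}-\ballot(w)$, and a greedy sort of the plain letters into their target positions achieves it. Part (b) then follows by the mirror construction, sliding each underlined letter rightward using the long braid $\ul{p}p(p+1)\to p(p+1)\ul{p}$ applied in the opposite direction.
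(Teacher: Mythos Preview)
Your proposal is correct and follows essentially the same approach as the paper: track the three statistics $\Cshift$, $\Bshift$, $\ballot$, compute them at the endpoints, and construct an explicit path monotone in each by pulling underlined letters out one at a time (leftmost first for (a), rightmost for (b)). The one place where the paper is more concrete is the ballot-rearrangement step: rather than saying ``set up a $j(j+1)$ next to $\ul{j}$ via a greedy sort,'' the paper sorts the entire plain prefix before the current $\ul{j}$ into the canonical form $(12\cdots m_1)(12\cdots m_2)\cdots(12\cdots m_k)$ with $m_1\ge m_2\ge\cdots$, by repeatedly bringing first occurrences of $1,2,\dots,m_l$ to the front; each such swap moves a letter left past a strictly smaller one, so $\ballot$ visibly increases by one, and the ballot identity $N_j=N_{j+1}$ forces $m_l\neq j$ for all $l$, guaranteeing that every block containing $j$ also contains $j+1$ adjacently, ready for the long braid. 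This concrete normal form is exactly what certifies the monotonicity you flagged as the main obstacle, but your greedy-sort description points at the same mechanism.
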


\begin{proof}
Let $w\in V_{\alpha,\iota_b}$ be a shuffle of $u\in \ul{R}(\alpha)$ and $x\in \Ballot_{a,b}$. Let $\tilde x = (12\ldots a)^b\in \Ballot_{a,b}$. We describe the path from $w$ to $\tilde w = u\tilde x$ recursively. Let
$w=u^{[0]}y^{[0]}$ where $u^{[0]}$ is empty and $y^{[0]}=w$. Suppose for some $i\geq 0$ that $u^{[i]}$ is a prefix of $u$ and $y^{[i]}$ is a shuffle of the remaining letters of $u$  in the same order and some ballot sequence so that $u^{[i]}y^{[i]}\in V_{\alpha,\iota_b}$. In each step the number of letters in $y^{[i]}$ will decrease by one as we move one occurrence of $\ul{j}$  left and add it to $u^{[i]}$. We will end with $y^{[k]}$, which is a pure ballot sequence and is equal to $\tilde x$. We will now describe how you construct $u^{[i+1]}y^{[i+1]}$. 

{\bf Case 1:} The word $y^{[i]}$ contains some letter $\ul{j}$, and say that $\ul{j}$ is the first from left to right and ${\bf y}$ is the prefix of $y^{[i]}$ before $\ul{j}$. If ${\bf y}$ is empty then let $u^{[i+1]}=u^{[i]}\ul{j}$ and $y^{[i+1]}$ be $y^{[i]}$ with the $\ul{j}$ removed. If ${\bf y}$ is not empty then  ${\bf y}$ is starts with $1$ and has some maximal letter $m_1$. We can use ballot commutation moves to bring the first occurrences of $[m_1]$ to the front to form $(12\ldots m_1)$. Note that as we move $k\in[m_1]$ left into its place it is only shifting left past smaller letters and each shift increases $\ballot({\bf y})$ by one. 
We have now transformed ${\bf y}$ into $(12\ldots m_1){\bf y}'$. Repeat this process on ${\bf y'}$ until we transform ${\bf y}$ into ${\bf y}''=(12\ldots m_1)(12\ldots m_2)\cdots(12\ldots m_k)$ where $m_1\geq m_2\geq \ldots \geq m_k$. By our conditions on $w$ we know that $m_l\neq j$ for all $l$. Note that each of these commutation ballot moves have been increasing $\ballot({\bf y})$ by one. Next we will take the $\ul{j}$ and shift it left through ${\bf y}''$. When we shift $\ul{j}$ left past a $l\neq j,j+1$ we are decreasing $\Cshift(w)$ by one, and when we shift $\ul{j}$ past the adjacent pair $j(j+1)$ we are decreasing $\Bshift(w)$ by one. We have just described how to shift $\ul{j}$ left past ${\bf y}''$, so we append $\ul{j}$ to the right side of $u^{[i]}$ to form $u^{[i+1]}$, and we remove the $\ul{j}$  from $y^{[i]}$ and adjust the prefix to ${\bf y}''$ to get $y^{[i+1]}$. 

{\bf Case 2:} The word $y^{[i]}$ does not contain any letter $\ul{j}$, so is a ballot sequence. If $y^{[i]}$ is not $\tilde x$, use the process above, that we used to transform ${\bf y}$ into ${\bf y}''$, to transform $y^{[i]}$ into $\tilde x$. With this our recursive process is complete. 

Note that in the recursive process we have been increasing $\ballot(w)$ until $\ballot(w)$ equals $\ballot(\tilde x)=\binom{a}{2}\binom{b}{2}$. This means in this process we have used $\binom{a}{2}\binom{b}{2}-\ballot(w)$ commutation ballot-moves. Also in the recursive process we have been decreasing $\Cshift(w)$ until $\Cshift(w)$ equalled 0 by using $\Cshift(w)$  commutation shift-moves. Finally, in the recursive process we have been decreasing $\Bshift(w)$ until $\Bshift(w)$ equalled 0 by using $\Bshift(w)$  long braid shift-moves. Hence, we have described a path from $w$ to $u\tilde x$ with $\Cshift(w)+\binom{a}{2}\binom{b}{2}-\ballot(w)$ commutation steps plus $\Bshift(w)$ long braid steps. 

We can describe the path from $w$ to $w' = \tilde x u$ recursively using a very similar manner. Along this path the value of  $\ballot(w)$ will increase until $\ballot(w)$ equals $\ballot(\tilde x)=\binom{a}{2}\binom{b}{2}$, so will use $\binom{a}{2}\binom{b}{2}-\ballot(w)$ commutation ballot-moves. 
The path moves letters $\ul{j}$ of $u$ to the right, so along this path the value of   $\Cshift(w)$ will increase to its maximum value $\Cshift(\tilde xu)=\ell(\alpha)b(a-2)$ by using $\ell(\alpha)b(a-2)-\Cshift(w)$ commutation shift-moves. Again,  the  path moves letters $\ul{j}$ of $u$ to the right, so  along this path the value of   $\Bshift(w)$ will increase  to its maximum value $\Bshift(\tilde xu)=\ell(\alpha)b$ by using $\ell(\alpha)b-\Bshift(w)$ long braid shift-moves. Hence, exists a path from $w$ to $\tilde x u$ with $\ell(\alpha)b(a-2)-\Cshift(w)+\binom{a}{2}\binom{b}{2}-\ballot(w)$ commutation steps plus $\ell(\alpha)b-\Bshift(w)$ long braid steps. 
\end{proof}

\begin{lem}
\label{lem:21-path2}
Let $H_1$ be the induced subgraph of $H_{\alpha,\iota_b}$ on vertices $\{ux:u\in\ul{R}(\alpha), x\in\Ballot_{a,b}\}$, and $H_2$ be the  induced subgraph on vertices $\{xu:u\in\ul{R}(\alpha), x\in\Ballot_{a,b}\}$. 
\begin{enumerate}[(a)]
\item Any path from $w\in H_1$ to $w'\in H_2$ requires at least $\ell(\alpha)b(a-2)$ commutation steps plus $\ell(\alpha)b$ long braid steps. 
\item For any $w,w'\in V_{\alpha,\iota_b}$ where the ballot sequence in $w'$ is  $\tilde x = (12\ldots a)^b$, any path from $w$ to $w'$ has at least $\binom{a}{2}\binom{b}{2}-\ballot(w)$ ballot-steps. 
\end{enumerate}
\end{lem}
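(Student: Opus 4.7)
The strategy is to use the three statistics $\Cshift$, $\Bshift$, and $\ballot$ as monovariants. Concretely, I will classify each edge type in $H_{\alpha,\iota_b}$ according to (i) which of the three statistics it can change, and (ii) by how much. Together with the endpoint values on $H_1$ and $H_2$ (resp.\ on the set of $w'$ whose ballot sequence is $\tilde x$), this gives the two lower bounds.

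The first step is a case-by-case bookkeeping on moves. The $\alpha$-moves (commutation and long braid purely on underlined letters) leave all three statistics invariant, since the positions of plain letters and the relative order of underlined and plain letters are preserved. A ballot-move swaps two adjacent plain letters with prescribed inequality, so it changes $\ballot$ by exactly $\pm 1$ and preserves $\Cshift$ and $\Bshift$ (no underlined letter is involved). A commutation shift-move exchanging $\ul p$ with a plain $q$ satisfies $|p-q|>1$ or $p>q$, which forces $q\notin\{p,p+1\}$; hence the local pair contributes to $\Cshift$ in exactly one of the two orientations, giving a $\pm 1$ change in $\Cshift$, while $\Bshift$ is preserved (since $q\neq p$) and $\ballot$ is preserved (a single plain letter shifts past an underlined letter, altering no plain-plain order). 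The subtle case is the braid shift-move $\ul p\,p\,(p+1) \leftrightarrow p\,(p+1)\,\ul p$: the plain letters $p,p+1$ translate together, so $\ballot$ is preserved, and in the right-hand configuration the internal pair $(p,\ul p)$ gives $j=k$ (contributing to $\Bshift$ and accounting for the $\pm 1$ change) while $(p+1,\ul p)$ gives $j=k+1$ (excluded from both $\Cshift$ and $\Bshift$); contributions from outside the three-letter window match up between the two sides, so $\Cshift$ is preserved and $\Bshift$ changes by exactly $\pm 1$.

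For part (a), I compute the endpoint values. Any $w=ux\in H_1$ has all underlined letters before all plain letters, so $\Cshift(w)=\Bshift(w)=0$. Any $w'=xu\in H_2$ has every plain letter of $x$ preceding every underlined letter of $u$; for each of the $\ell(\alpha)$ underlined letters $\ul k$, the $ab$ plain letters include exactly $b$ copies of $k$ and $b$ copies of $k+1$, so $\Cshift(w')=\ell(\alpha)(ab-2b)=\ell(\alpha)b(a-2)$ and $\Bshift(w')=\ell(\alpha)b$. Since only commutation shift-moves alter $\Cshift$ (each by $\pm 1$) and only braid shift-moves alter $\Bshift$ (each by $\pm 1$), any path from $w$ to $w'$ must contain at least $\ell(\alpha)b(a-2)$ commutation (shift) moves and at least $\ell(\alpha)b$ long braid (shift) moves, giving the stated bound.

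For part (b), the ballot subsequence of $w'$ is $\tilde x=(12\cdots a)^b$, whose inversions are precisely the pairs of the form ``value $p$ in an earlier copy, value $q<p$ in a later copy''; summing over the $\binom{b}{2}$ unordered pairs of copies and the $\binom{a}{2}$ pairs $(p,q)$ with $p>q$ gives $\ballot(w')=\binom{a}{2}\binom{b}{2}$, which is also the maximum value of $\ballot$ on $\Ballot_{a,b}$. Since the only moves that change $\ballot$ are ballot-moves, each by $\pm 1$, any path from $w$ to $w'$ must use at least $\binom{a}{2}\binom{b}{2}-\ballot(w)$ ballot-moves. The main obstacle is the braid shift-move analysis: one must verify the local cancellations that make both $\Cshift$ and $\ballot$ invariant under it, so that the three statistics genuinely decouple and each lower bound follows cleanly from its own monovariant.
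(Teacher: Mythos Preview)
Your proof is correct and follows essentially the same monovariant argument as the paper: both use that $\alpha$-moves fix all three statistics, ballot-moves change only $\ballot$ by $\pm 1$, commutation shift-moves change only $\Cshift$ by $\pm 1$, and braid shift-moves change only $\Bshift$ by $\pm 1$, then read off the lower bounds from the endpoint values. Your write-up is in fact more careful than the paper's on the braid shift-move case (verifying that the internal pairs $(p,\ul p)$ and $(p{+}1,\ul p)$ are excluded from $\Cshift$ and that only the first contributes to $\Bshift$), which the paper states without this level of detail.
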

\begin{proof}
Let $H_1$ be the subgraph of $H_{\alpha,\iota_b}$ on vertices $\{ux:u\in\ul{R}(\alpha), x\in\Ballot_{a,b}\}$, and $H_2$ be the subgraph on vertices $\{xu:u\in\ul{R}(\alpha), x\in\Ballot_{a,b}\}$. Consider a path $P$ from $w\in H_1$ to $w'\in H_2$. Note that $\Cshift(w')=\ell(\alpha)b(a-2)$, $\Bshift(w')=\ell(\alpha)b$ and $\Cshift(w)=\Bshift(w)=0$. Also, note that any step in $P$ will be either an $\alpha$-move, ballot-move or a shift-move. A ballot or $\alpha$-move won't change the value of $\Cshift$ or $\Bshift$ and a shift-move will only change $\Cshift$ or $\Bshift$, but not both, by exactly one. This means in order to get from $w$ to $w'$ we require $\Cshift(w')=\ell(\alpha)b(a-2)$ commutation shift-moves and $\Bshift(w')=\ell(\alpha)b$ long braid shift-moves. 

Let $w,w'\in V_{\alpha,\iota_b}$ where the ballot sequence in $w'$ is  $\tilde x = (12\ldots a)^b$. Consider a path $P$ from $w$ to $w'$. Note that $\ballot(w')=\binom{a}{2}\binom{b}{2}$. Also, note that any step in $P$ will be either an $\alpha$-move, ballot-move or a shift-move. A shift-move or an $\alpha$-move won't change the value of $\ballot(w)$ and a ballot-move will only change $\ballot(w)$  by exactly one. This means in order to get from $w$ to $w'$ we require $\binom{a}{2}\binom{b}{2}-\ballot(w)$ commutation ballot-moves. 
\end{proof}

\begin{thm}
\label{thm:21-bounds}
Let $\pi=21[\alpha,\iota_b]$ and $|\alpha|=a$. 
\begin{enumerate}[(i)]
\item $\diam( G_{\alpha}) +  \ell(\alpha)b(a-1)+\binom{a}{2}\binom{b}{2}\leq \diam (G_{\pi})\leq \diam( G_{\alpha}) +  \ell(\alpha)b(a-1)+2\binom{a}{2}\binom{b}{2}$
\item $\diam (C_{\pi})=\diam (C_{\alpha}) + \ell(\alpha)b$
\item $\diam( B_{\alpha} )+ \ell(\alpha)b(a-2)+\binom{a}{2}\binom{b}{2}\leq \diam( B_{\pi})\leq \diam( B_{\alpha} )+ \ell(\alpha)b(a-2)+2\binom{a}{2}\binom{b}{2}$. 
\end{enumerate}
\end{thm}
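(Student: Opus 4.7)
The plan is to work in the isomorphic graph $H_{\alpha,\iota_b}$ via Corollary~\ref{cor:21-graph-iso} and adapt the two-path averaging argument used in Theorem~\ref{thm:12_formula}. The edges of $H_{\alpha,\iota_b}$ split into three disjoint types---$\alpha$-moves on the letters $\ul{[a-1]}$, ballot-moves on the letters $[a]$, and shift-moves between the two alphabets---and each subdivides into commutation and long braid forms. This trichotomy lets us count moves of each type independently throughout the argument.

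For the upper bounds, given $w,w' \in V_{\alpha,\iota_b}$ with associated $\alpha$-subwords $u,u' \in \ul{R}(\alpha)$, I construct two paths routing through the common ballot sequence $\tilde x = (12\cdots a)^b$. The path $P_1$ is $w \to u\tilde x \to u'\tilde x \to w'$, applying Lemma~\ref{lem:21-path1}(a) (and its reverse) on the outer legs and a minimum $\alpha$-path on the middle leg. The path $P_2$ is $w \to \tilde x u \to \tilde x u' \to w'$, using Lemma~\ref{lem:21-path1}(b) analogously. The identities $\Cshift(w)+[\ell(\alpha)b(a-2)-\Cshift(w)] = \ell(\alpha)b(a-2)$ and $\Bshift(w)+[\ell(\alpha)b-\Bshift(w)] = \ell(\alpha)b$ (and likewise for $w'$) make the shift contributions of $P_1$ and $P_2$ complementary. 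Summing the two path lengths and halving shows that one of them has length at most
$$\diam(G_\alpha) + \ell(\alpha)b(a-1) + 2\binom{a}{2}\binom{b}{2} - \ballot(w) - \ballot(w'),$$
which proves (i) after bounding the $\ballot$ terms below by $0$. Restricting the count to just commutation moves (and choosing the middle $\alpha$-leg to minimize its commutation weight, at cost at most $\diam(B_\alpha)$) gives the upper bound in (iii), while restricting to just long braid moves (with the middle $\alpha$-leg chosen of braid-weight at most $\diam(C_\alpha)$) gives $\diam(C_\alpha) + \ell(\alpha)b$ for (ii); in (ii) the ballot terms vanish because ballot-moves are commutation. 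Each bound then transfers to the appropriate graph diameter via Lemma~\ref{lem:diam_bounds}(1).

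For the lower bounds I take the specific pair $w = u\, x_0$ and $w' = \tilde x\, u'$, where $x_0 = 1^b 2^b \cdots a^b$ (so $\ballot(x_0) = 0$) and $u,u' \in \ul{R}(\alpha)$ are chosen to realize $\diam(G_\alpha)$ for (i), to force $\diam(C_\alpha)$ long braid moves on every $G_\alpha$-path between them for (ii), or to force $\diam(B_\alpha)$ commutation moves on every such path for (iii). Three independent counts then apply to any $G_\pi$-path from $w$ to $w'$: Lemma~\ref{lem:21-path2}(a) forces at least $\ell(\alpha)b(a-2)$ commutation shift-moves plus $\ell(\alpha)b$ long braid shift-moves since $w \in H_1$ and $w' \in H_2$; Lemma~\ref{lem:21-path2}(b) forces at least $\binom{a}{2}\binom{b}{2}$ commutation ballot-moves, since the ballot sequence of $w'$ is $\tilde x$ and $\ballot(w) = 0$; and extracting the $\ul{[a-1]}$-subsequence of the path produces a valid walk from $u$ to $u'$ in $G_\alpha$, forcing the required number of $\alpha$-moves of each kind. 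Combining these via Lemma~\ref{lem:diam_bounds}(2) yields the matching lower bounds. The main obstacle is the careful book-keeping required to confirm that the three move types never overlap and that projection preserves the commutation-versus-long braid distinction on $\alpha$-moves. The remaining mathematical subtlety is why the upper and lower bounds in (i) and (iii) differ by exactly $\binom{a}{2}\binom{b}{2}$: both $P_1$ and $P_2$ pass through the same ballot sequence $\tilde x$, so the ballot contribution $2\binom{a}{2}\binom{b}{2}$ fails to cancel under averaging, whereas the shift contributions do cancel. Because ballot-moves are purely commutation, this gap disappears in (ii) and we obtain equality there.
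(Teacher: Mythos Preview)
Your proposal is correct and follows essentially the same approach as the paper's own proof: working in $H_{\alpha,\iota_b}$, building the two paths $P_1$ (through $u\tilde x$) and $P_2$ (through $\tilde x u$) via Lemma~\ref{lem:21-path1}, averaging to obtain the upper bounds, and using the pair $u\,x_0$ and $\tilde x\,u'$ together with Lemma~\ref{lem:21-path2} plus the projection onto $G_\alpha$ for the lower bounds. Your lower-bound endpoints have the roles of $\tilde x$ and $1^b2^b\cdots a^b$ swapped relative to the paper, and you are somewhat more explicit about choosing different middle $\alpha$-legs for parts (ii) and (iii), but these are cosmetic differences.
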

\begin{proof}
By Corollary~\ref{cor:21-graph-iso} we know that $G_{\pi}$ is isomorphic to $H_{\alpha,\iota_b}$, so it suffices to prove this theorem on the graph $H_{\alpha,\iota_b}$. We will need to consider  specific subgraphs of $H_{\alpha,\iota_b}$. The first is on the vertices $\{ux:u\in\ul{R}(\alpha), x\in\Ballot_{a,b}\}$, which we will call $H_1$. The second is on the vertices $\{xu:u\in\ul{R}(\alpha), x\in\Ballot_{a,b}\}$, which we will call $H_2$.
In our proof we will be using two very particular ballot sequences $\tilde x = (12\ldots a)^b$ and $\tilde y=1^b2^b\ldots a^b$. 

Let $w,w'\in V_{\alpha,\iota_b}$ be shuffles of $u,u'\in \ul{R}(\alpha)$ and $x,x'\in \Ballot_{a,b}$ respectively. 
We can construct two different paths from $w$ to $w'$. By Lemma~\ref{lem:21-path1} we can construct one path $P_1$ through $H_1$ by starting at $w$, proceeding to $u\tilde x$, then $u'\tilde x$ and finally $w'$. 
This path uses 
$$\Cshift(w)+\Cshift(w')+2\binom{a}{2}\binom{b}{2}-\ballot(w)-\ballot(w')$$
commutation steps, 
$$\Bshift(w)+\Bshift(w')$$
long braid steps, and $d(u,u')$ $\alpha$-moves. By Lemma~\ref{lem:21-path1} we can construct a second path $P_2$ through $H_2$ by starting at $w$, proceeding to $\tilde x u$, then $\tilde x u'$ and finally $w'$. 
This path uses 
$$2\ell(\alpha)b(a-2)-\Cshift(w)-\Cshift(w')+2\binom{a}{2}\binom{b}{2}-\ballot(w)-\ballot(w')$$
commutation steps, 
$$2\ell(\alpha)b-\Bshift(w)-\Bshift(w')$$
long braid steps, and $d(u,u')$ $\alpha$-moves. Either $P_1$ or $P_2$ is bounded above by the average, which has $\ell(\alpha)b(a-2)+\ballot(w)+\ballot(w')$ commutation steps and $\ell(\alpha)b$ long braid steps. Note that  $d(u,u')$ is bounded above by $\diam(G_{\alpha})$. This means
$$d(w,w')\leq \diam(G_{\alpha})+\ell(\alpha)b(a-1) +2\binom{a}{2}\binom{b}{2}.$$
This proves the upper bound for the diameter of $G_{\pi}$. Using the same argument we can show that we have found a path from $w$ to $w'$ that uses at most $\diam(B_{\alpha})+\ell(\alpha)b(a-2) +2\binom{a}{2}\binom{b}{2}$ commutation moves and at most $\diam(C_{\alpha})+\ell(\alpha)b$ long braid moves. This gives us an upper bound for both $\diam(C_{\pi})$ and $\diam(B_{\pi})$

Now let $u,u'$ be two reduced words in $R(\alpha)$ with $d(u,u')=\diam(G_{\alpha})$. Consider the vertices $w=\ul{u}\tilde x$ and $w'=\tilde y\ul{u}'$ of $H_{\alpha,\iota_b}$ and $P$ a path between them. Because $w$ is in $H_1$ and $w'$ is in $H_2$ by Lemma~\ref{lem:21-path2} we know that we have at least $\ell(\alpha)b(a-2)$ shift-steps that are commutation steps plus $\ell(\alpha)b$ shift-steps that are long braid steps. The path $P$ will also contain at least $\ballot(w')=\binom{a}{2}\binom{b}{2}$ ballot steps that are commutation steps. Finally, we must have at least $\diam(G_{\alpha})$ $\alpha$-steps else we could project our path onto $G_{\alpha}$ and get a shorter path from $u$ to $u'$. All together this means that any path from $w$ to $w'$ has length at least 
$$\diam( G_{\alpha}) +  \ell(\alpha)b(a-1)+\binom{a}{2}\binom{b}{2}.$$
We can similarly argue that any path $w$ to $w'$ has at least $\diam( C_{\alpha}) +  \ell(\alpha)b$ long  braid moves and at least $\diam( B_{\alpha}) +  \ell(\alpha)b(a-2)+\binom{a}{2}\binom{b}{2}$ commutation moves. This proves the lower bounds on $C_{\pi}$ and $B_{\pi}$ respectively. 
\end{proof}

When $b=1$ in Theorem~\ref{thm:21-bounds}, we get the following corollary. 

\begin{cor}
\label{cor:21_formula}
Let $\pi=21[\alpha,1]$ and $|\alpha|=a$. 
\begin{enumerate}[(i)]
\item $\diam (G_{\pi})= \diam( G_{\alpha}) +  \ell(\alpha)(a-1)$
\item $\diam (C_{\pi})=\diam (C_{\alpha}) + \ell(\alpha)$
\item $\diam( B_{\pi})= \diam( B_{\alpha} )+ \ell(\alpha)(a-2)$
\end{enumerate}
\end{cor}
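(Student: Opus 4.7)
The plan is to derive Corollary~\ref{cor:21_formula} as an immediate specialization of Theorem~\ref{thm:21-bounds}, exploiting the fact that when $b=1$ the "slack" between the upper and lower bounds in that theorem collapses to zero. Specifically, I would substitute $b=1$ into each of the three inequalities in Theorem~\ref{thm:21-bounds} and observe that $\binom{b}{2}=\binom{1}{2}=0$, so the term $\binom{a}{2}\binom{b}{2}$ that separates the lower bound from the upper bound in parts (i) and (iii) vanishes identically. For part (ii) the theorem already gives an equality, so setting $b=1$ produces the stated formula without any further work.

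More concretely, for part (i), plugging $b=1$ into
\[
\diam(G_\alpha) + \ell(\alpha)b(a-1)+\binom{a}{2}\binom{b}{2}\leq \diam(G_\pi)\leq \diam(G_\alpha) + \ell(\alpha)b(a-1)+2\binom{a}{2}\binom{b}{2}
\]
yields both bounds equal to $\diam(G_\alpha)+\ell(\alpha)(a-1)$, forcing equality. The same collapse occurs in part (iii): both the upper and lower bounds reduce to $\diam(B_\alpha)+\ell(\alpha)(a-2)$. For part (ii), the equality $\diam(C_\pi)=\diam(C_\alpha)+\ell(\alpha)b$ becomes $\diam(C_\alpha)+\ell(\alpha)$ at $b=1$.

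There is essentially no obstacle here: the entire content of the corollary is already contained in Theorem~\ref{thm:21-bounds}, and the only observation needed is the arithmetic fact that $\binom{1}{2}=0$. The proof can therefore be written in one or two sentences, invoking Theorem~\ref{thm:21-bounds} directly and noting that the gap term $\binom{a}{2}\binom{b}{2}$ vanishes for $b=1$. If desired, one could also remark that the specialization $\pi=21[\alpha,1]$ corresponds exactly to adjoining a single new largest letter, which is the inductive step needed to recover the recursion for the diameter of the graphs of the longest permutation $\delta_n$ as promised in the introduction.
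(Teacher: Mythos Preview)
Your proposal is correct and matches the paper's approach exactly: the paper states the corollary immediately after Theorem~\ref{thm:21-bounds} with the one-line justification ``When $b=1$ in Theorem~\ref{thm:21-bounds}, we get the following corollary,'' which is precisely your argument that $\binom{1}{2}=0$ collapses the bounds. One small slip in your optional closing remark: $21[\alpha,1]$ adjoins a new \emph{smallest} element (placed last), not a largest one.
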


Because $\delta_{n+1}=21[\delta_{n},1]$, we get the following corollary.
\begin{cor}
We have the following recursions for the diameters $D(n)=\diam(G_{\delta_n})$, $C(n)=\diam(C_{\delta_n})$ and $B(n)=\diam(B_{\delta_n})$ with $D(1)=C(1)=B(1)=0$. 
\begin{enumerate}[(i)]
\item $D(n+1)=D(n)+(n-1)\binom{n}{2}$
\item $C(n+1)=C(n)+\binom{n}{2}$
\item $B(n+1)=B(n)+(n-2)\binom{n}{2}$
\end{enumerate}
\label{cor:longest_word}
\end{cor}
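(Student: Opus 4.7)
The plan is to deduce the three recursions as immediate consequences of Corollary~\ref{cor:21_formula}, applied with $\alpha = \delta_n$ and $b = 1$. The entire substantive work has already been done in that corollary; what remains is a bookkeeping reduction together with verification of the base case.

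First I would verify the identity $\delta_{n+1} = 21[\delta_n, 1]$. Reading $\delta_{n+1} = (n+1)n(n-1)\cdots 21$ in one-line notation, the prefix $(n+1)n\cdots 2$ uses the consecutive interval of values $\{2,3,\ldots,n+1\}$ (so it is a block), and once shifted down by $1$ it is $n(n-1)\cdots 1 = \delta_n$; the single suffix entry $1$ is a block order-isomorphic to the length-one permutation. Replacing the two entries of $\sigma = 21$ by these two blocks in the inflation construction of Section~\ref{sec:pre} produces exactly $\delta_{n+1}$. Thus the hypothesis of Corollary~\ref{cor:21_formula} is satisfied with $\alpha = \delta_n$ and $a = |\delta_n| = n$.

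Next I would record the one numerical input needed, namely the length $\ell(\delta_n) = \binom{n}{2}$, which is immediate because every pair $i<j$ is an inversion of $\delta_n$. Plugging $a = n$ and $\ell(\alpha) = \binom{n}{2}$ into parts (i), (ii), (iii) of Corollary~\ref{cor:21_formula} yields
\[
D(n+1) = D(n) + \binom{n}{2}(n-1), \quad C(n+1) = C(n) + \binom{n}{2}, \quad B(n+1) = B(n) + \binom{n}{2}(n-2),
\]
which are the three claimed recursions. The base case $D(1) = C(1) = B(1) = 0$ is immediate: $\delta_1$ is the identity of $\fS_1$, so $R(\delta_1)$ is the single empty word and each of $G_{\delta_1}, C_{\delta_1}, B_{\delta_1}$ is a single vertex of diameter zero.

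Since every step is a direct substitution, there is no real obstacle; the only content is recognizing the inflation identity for $\delta_{n+1}$ and computing $\ell(\delta_n) = \binom{n}{2}$.
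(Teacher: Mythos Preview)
Your proposal is correct and follows exactly the paper's approach: the paper simply notes that $\delta_{n+1}=21[\delta_n,1]$ and appeals to Corollary~\ref{cor:21_formula}, and you have filled in the routine substitutions $a=n$, $\ell(\delta_n)=\binom{n}{2}$ together with the trivial base case.
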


\section{Diameters of the Graphs for 312 or 231 Avoiding Permutations}
\label{sec:pattern_avoidance}
In this section, we first describe how we can use symmetries on a square to justify cases where graphs $G_{\pi}$, $C_{\pi}$ and $B_{\pi}$ are isomorphic. Next, we describe 312 or 231 pattern avoiding permutations in terms of 12-inflations and 21-inflations. We then find exact recursive formulas for the diameters of the graphs $G_{\pi}, C_{\pi}$ and $B_{\pi}$ where the permutation $\pi$ is 312-avoiding or 231-avoiding. In order to do this, we use Theorem~\ref{thm:12_formula} and Corollary~\ref{cor:21_formula}, the recursive formulas for the diameters of $G_{\pi}, C_{\pi}$ and $B_{\pi}$ when $\pi=12[\alpha,\beta]$ and $\pi=21[\alpha,1]$ for any permutations $\alpha$ and $\beta$.

\subsection{Symmetries}
\label{sec:symm}
There are many pairs on permutations $\pi$ and $\sigma$ where $G_{\pi}$ and $G_{\sigma}$ are isomorphic. We will discuss three such cases  related to symmetries on the square, the dihedral group. These graph isomorphisms will allow us to extend some of our previous results, and more quickly justify our families of permutations that achieve the upper bound of  Conjecture~\ref{conj:RR}.

There are eight operations in the dihedral group  that preserve the square, and all of them can be described as a rotation $r_{\theta}$ counter-clockwise by $\theta$ degrees or a reflection $r_m$ over a line with slope $m$ going through the center of the square. If we perform the same operation on our box diagram of a permutation $\pi$, then the output is another box diagram of a permutation, which we will notate as $r_{\theta}(\pi)$ and $r_m(\pi)$. We will only be interested in three of these operations, $r_{180^{\circ}}$, $r_1$ and $r_{-1}$. 

\begin{exmp}
Let $\pi=3241$ where $R(\pi)=\{1231,1213,2123\}$. Then $r_{180^{\circ}}(\pi)=4132$ and $R(r_{180^{\circ}}(\pi))=\{3213,3231,2321\}$. Also, $r_{1}(\pi)=4213$ and $R(r_{1}(\pi))=\{1321,3121,3212\}$. Finally, $r_{-1}(\pi)=2431$ and $R(r_{-1}(\pi))=\{3123,1323,1232\}$. We illustrate this in Figure~\ref{fig:symm}. 
\end{exmp}

\begin{figure}
\begin{center}
\begin{tikzpicture}[scale=0.8]
\draw[gray, dashed] (1,1) grid (4,4);
\filldraw (1,3) circle (2pt);
\filldraw (2,2) circle (2pt);
\filldraw (3,4) circle (2pt);
\filldraw (4,1) circle (2pt);
\node[below] at (2.5,0) {$\pi=3241$};
\end{tikzpicture}
\qquad
\qquad
\begin{tikzpicture}[scale=0.8]
\draw[gray, dashed] (1,1) grid (4,4);
\filldraw (1,4) circle (2pt);
\filldraw (2,1) circle (2pt);
\filldraw (3,3) circle (2pt);
\filldraw (4,2) circle (2pt);
\node[below] at (2.5,0) {$r_{180^{\circ}}(\pi)=4132$};
\end{tikzpicture}
\qquad
\qquad
\begin{tikzpicture}[scale=0.8]
\draw[gray, dashed] (1,1) grid (4,4);
\filldraw (1,4) circle (2pt);
\filldraw (2,2) circle (2pt);
\filldraw (3,1) circle (2pt);
\filldraw (4,3) circle (2pt);
\node[below] at (2.5,0) {$r_{1}(\pi)=4231$};
\end{tikzpicture}
\qquad
\qquad
\begin{tikzpicture}[scale=0.8]
\draw[gray, dashed] (1,1) grid (4,4);
\filldraw (1,2) circle (2pt);
\filldraw (2,4) circle (2pt);
\filldraw (3,3) circle (2pt);
\filldraw (4,1) circle (2pt);
\node[below] at (2.5,0) {$r_{-1}(\pi)=2431$};
\end{tikzpicture}
\end{center}
\caption{For $\pi=3241$, we see that $r_{180^{\circ}}(\pi)=4132$, $r_{1}(\pi)=4231$, and $r_{-1}(\pi)=2431$.}
\label{fig:symm}
\end{figure}
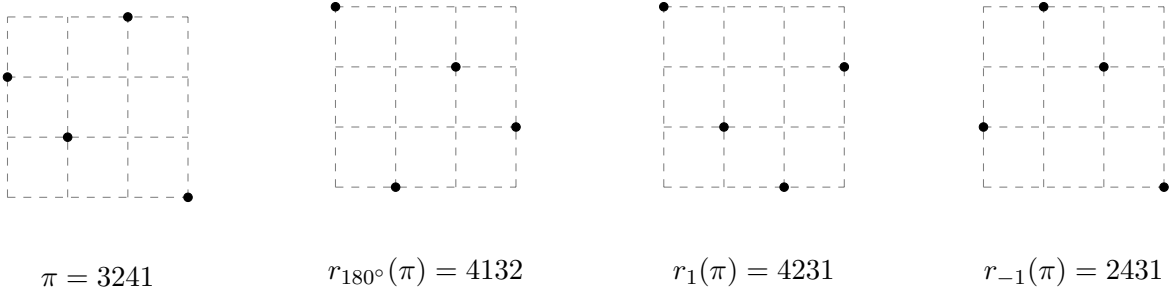

\begin{lem}
\label{lem:symmetries}
Let $\pi=\pi_1\pi_2\ldots\pi_n\in\fS_n$. 
\begin{enumerate}
\item Let $\sigma=r_{180^{\circ}}(\pi)$. Then, $\sigma=(n-\pi_n+1)(n-\pi_{n-1}+1)\ldots(n-\pi_1+1)$.  Also, there is a bijection  $R(\pi)\rightarrow R(\sigma)$ defined by $r_1r_2\cdots r_{\ell}\mapsto (n-r_1)(n-r_2)\cdots(n-r_{\ell})$.
\item Let $\sigma=r_{1}(\pi)$. Then, $\sigma=\pi^{-1}$.  Also, there is a bijection  $R(\pi)\rightarrow R(\sigma)$ defined by $r_1r_2\cdots r_{\ell}\mapsto r_{\ell}r_{\ell-1}\cdots r_{1}$.
\item Let $\sigma=r_{-1}(\pi)$. Then, $\sigma=(r_{180^{\circ}}(\pi))^{-1}$.  Also, there is a bijection  $R(\pi)\rightarrow R(\sigma)$ defined by $r_1r_2\cdots r_{\ell}\mapsto (n-r_{\ell})(n-r_{\ell-1})\cdots (n-r_{1})$.
\end{enumerate}
\end{lem}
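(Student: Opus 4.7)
The plan is to verify each identity by tracking the geometric symmetry on the box diagram to get the one-line notation formula, and then transfer to reduced words via standard Coxeter-theoretic observations. For part (1), I would first check directly that $r_{180^{\circ}}$ sends a point $(i,\pi_i)$ of the diagram to $(n-i+1, n-\pi_i+1)$, which yields $\sigma_k = n - \pi_{n-k+1} + 1$ in one-line notation. Group-theoretically this says $r_{180^{\circ}}(\pi) = w_0 \pi w_0$ where $w_0 = \delta_n$ is the longest element. Since $w_0^2 = e$, conjugation by $w_0$ is a length-preserving automorphism of $\fS_n$, and a short direct verification shows it sends $s_i$ to $s_{n-i}$ (check on pairs of adjacent entries). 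Hence if $\pi = s_{i_1}\cdots s_{i_\ell}$ is a reduced decomposition, then $\sigma = (w_0 s_{i_1} w_0)\cdots (w_0 s_{i_\ell} w_0) = s_{n-i_1}\cdots s_{n-i_\ell}$ is again reduced, and the word-level map is clearly an involution, hence a bijection $R(\pi) \to R(\sigma)$.

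For part (2), the reflection $r_1$ over the line $y = x$ swaps the two coordinates of every point in the diagram, so $(i,\pi_i) \mapsto (\pi_i, i)$, which in one-line notation is exactly $\sigma = \pi^{-1}$. On reduced words, if $\pi = s_{i_1}\cdots s_{i_\ell}$ then $\pi^{-1} = s_{i_\ell}^{-1}\cdots s_{i_1}^{-1} = s_{i_\ell}\cdots s_{i_1}$ since each $s_i$ is an involution; the reversal map preserves length and is itself an involution, so it is a bijection $R(\pi)\to R(\sigma)$.

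For part (3), I would observe that $r_{-1} = r_1 \circ r_{180^{\circ}}$ as rigid motions of the square, so combining parts (1) and (2) yields $r_{-1}(\pi) = (r_{180^{\circ}}(\pi))^{-1}$. The reduced-word bijection is the corresponding composition: first apply the map from (1) that replaces each letter $r_k$ with $n-r_k$, then apply the reversal from (2); together this is $r_1 r_2\cdots r_\ell \mapsto (n-r_\ell)(n-r_{\ell-1})\cdots (n-r_1)$.

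The arguments are essentially bookkeeping and the only substantive ingredient is the identity $w_0 s_i w_0 = s_{n-i}$ used in part (1); once that is in hand, the rest follows from the involutivity of the longest element, the involutivity of each simple reflection, and the length-preserving character of inversion and of conjugation by $w_0$. No step should present a real obstacle.
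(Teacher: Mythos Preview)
Your proposal is correct and follows essentially the same route as the paper. The only notable difference is in part~(1): you identify $r_{180^{\circ}}(\pi)$ as conjugation $w_0\pi w_0$ by the longest element and invoke the standard fact $w_0 s_i w_0 = s_{n-i}$, whereas the paper instead verifies directly that $r_{180^{\circ}}$ is multiplicative, i.e.\ $r_{180^{\circ}}(\alpha\circ\beta)=r_{180^{\circ}}(\alpha)\circ r_{180^{\circ}}(\beta)$, by an explicit index computation. These are two phrasings of the same fact; your Coxeter-theoretic framing is slightly more conceptual, while the paper's is self-contained. Parts~(2) and~(3) match the paper's argument exactly (the paper writes $r_{-1}=r_{180^{\circ}}\circ r_1$ rather than your $r_1\circ r_{180^{\circ}}$, but $r_{180^{\circ}}$ is central in the dihedral group so either order works).
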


\begin{proof}
Let $\pi\in\fS_n$. First let us consider $\sigma=r_{180^{\circ}}(\pi)$. The point $(i,\pi_i)$ in the box diagram of $\pi$ becomes $(n-i+1,n-\pi_i+1)$ in $\sigma$. This means $\sigma=(n-\pi_n+1)(n-\pi_{n-1}+1)\ldots(n-\pi_1+1)$.  We claim that $r_{180^{\circ}}(\alpha\circ\beta)=r_{180^{\circ}}(\alpha)\circ r_{180^{\circ}}(\beta)$ for any permutations $\alpha,\beta\in\fS_n$ composing from left to right. If the claim is true, because $r_{180^{\circ}}(s_i)=s_{n-i}$ we then know that $R(\pi)\rightarrow R(\sigma)$ defined by $r_1r_2\cdots r_{\ell}\mapsto (n-r_1)(n-r_2)\cdots(n-r_{\ell})$ is a bijection. To prove the claim we note in general  for $\pi\in\fS_n$ that $i\mapsto n-\pi_{n-i+1}+1$ in $r_{180^{\circ}}(\pi)$. For $r_{180^{\circ}}(\alpha\circ\beta)$, $i$ will map to $n-(\alpha\circ\beta)_{n-i+1}+1=n-\beta_{\alpha_{n-i+1}}+1$. For $r_{180^{\circ}}(\alpha)\circ r_{180^{\circ}}(\beta)$, the permutation $r_{180^{\circ}}(\alpha)$ will map $i$ to  $n-\alpha_{n-i+1}+1$ and $r_{180^{\circ}}(\beta)$ will further map this to $n-\beta_{\alpha_{n-i+1}}+1$, which proves the claim.

Now let $\sigma=r_{1}(\pi)$. The point $(i,\pi_i)$ in the box diagram of $\pi$ becomes $(\pi_i,i)$ is $\sigma$. This implies that $\sigma=\pi^{-1}$. Further we have a bijection $R(\pi)\rightarrow R(\sigma)$ because if $r_1r_2\cdots r_{\ell}\in R(\pi)$ then $\pi=s_{r_1}s_{r_2}\cdots s_{r_{\ell}}$.  Certainly $\pi^{-1}=s_{r_{\ell}}s_{r_{\ell-1}}\cdots s_{r_{1}}$, so $r_{\ell}r_{\ell-1}\cdots r_{1}\in R(\sigma)$. 

Finally let us consider $\sigma=r_{-1}(\pi)$. Note that $r_{\-1}$ is equal to the composition $r_{180^{\circ}}\circ r_1$. Since we have already proved this for the maps $r_{180^{\circ}}$ and $r_1$ we are done. 
\end{proof}

Because of the bijections on reduced words defined in Lemma~\ref{lem:symmetries} we can see that for every commutation move and long braid move made on $r\in R(\pi)$ there will be a corresponding move in its image in $R(r_x(\pi))$. This preserves the graph structure, the edge type and justifies why the corresponding graphs of $\pi$ and $r_x(\pi)$ on reduced words, commutation classes and long braid classes are isomorphic, for $x\in \{180^{\circ},1,-1\}$. 

\begin{prop}
\label{prop:dihedral}
For the following pairs of permutations, $\pi$ and $\sigma$, the graphs $G_{\pi}$, $C_{\pi}$ and $B_{\pi}$ are isomorphic to  $G_{\sigma}$, $C_{\sigma}$ and $B_{\sigma}$ respectively, so the diameters are also equal.
\begin{enumerate}
\item $\pi$ and $\sigma=r_{180^{\circ}}(\pi)$
\item $\pi$ and $\sigma=r_{1}(\pi)$
\item $\pi$ and $\sigma=r_{-1}(\pi)$
\end{enumerate}
\end{prop}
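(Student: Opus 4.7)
The plan is to show that the three bijections $\phi\colon R(\pi)\to R(\sigma)$ supplied by Lemma~\ref{lem:symmetries} are in fact graph isomorphisms $G_\pi\to G_\sigma$ that \emph{preserve edge types} (commutation edges map to commutation edges, long braid edges map to long braid edges). Once this is established, the isomorphisms $C_\pi\to C_\sigma$ and $B_\pi\to B_\sigma$ come for free: since $C$ is obtained from $G$ by contracting precisely the commutation edges and $B$ by contracting precisely the long braid edges, any edge-type-preserving isomorphism descends to both contracted graphs and, in particular, preserves their diameters.

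First I would dispatch the rotation case $\sigma=r_{180^\circ}(\pi)$, where $\phi$ acts letter by letter as $r_i\mapsto n-r_i$. Since $|(n-r_i)-(n-r_{i+1})|=|r_i-r_{i+1}|$, the commutation condition ``$|r_i-r_{i+1}|>1$'' is preserved at position $(i,i+1)$. For a long braid triple $r_ir_{i+1}r_{i+2}=j(j+1)j$, the image is $(n-j)(n-j-1)(n-j)$, which, setting $k=n-j-1$, equals $(k+1)k(k+1)$; and the braid partner $(j+1)j(j+1)$ maps to $k(k+1)k$. Thus the long braid relation carries over (with the two sides swapped). For $\sigma=r_1(\pi)$ the bijection is word reversal; adjacencies and the commutation condition are preserved at the reversed positions, and since the triples $j(j+1)j$ and $(j+1)j(j+1)$ are palindromes, each long braid triple maps to itself at positions $(\ell-i-1,\ell-i,\ell-i+1)$. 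For $\sigma=r_{-1}(\pi)$ I would simply compose the previous two isomorphisms, using that $r_{-1}=r_{180^\circ}\circ r_1$ as noted in the proof of Lemma~\ref{lem:symmetries}.

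The hard part, such as it is, is purely careful bookkeeping: in the $180^\circ$ case one must observe that the shift $j\mapsto n-j$ swaps the two sides of the braid triple with each other rather than destroying the relation; and in the reversal case one relies on the palindrome structure of each braid triple. Neither is deep, but both need to be written down explicitly before concluding that $\phi$ is an edge-type-preserving graph isomorphism. With this verification in hand, the isomorphisms of $G_\pi$, $C_\pi$, and $B_\pi$ follow at once, and the equality of the three diameters on either side is immediate.
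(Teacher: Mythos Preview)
Your proposal is correct and follows exactly the same approach as the paper: the paper's justification is a single paragraph immediately preceding the proposition, observing that the bijections of Lemma~\ref{lem:symmetries} send commutation moves to commutation moves and long braid moves to long braid moves, hence preserve both the graph structure and edge types. Your write-up simply makes explicit the two small verifications (that $j\mapsto n-j$ preserves $|r_i-r_{i+1}|$ and swaps the two braid triples, and that reversal fixes each palindromic braid triple) that the paper leaves implicit.
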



\subsection{Diameters of the Graphs for 312-Avoiding Permutations}

Let $\pi$ be a 312-avoiding permutation with $|\pi|=n$. Let $m\in[n]$ be such that $\pi_m=1$. Since $\pi$ is 312-avoiding, we should have $\pi_i<\pi_j$ for all $i<m<j$. Thus, we can write $\pi$ as 
\begin{equation*}
\pi=12[21[\pi',1],\pi'']
\end{equation*}
for two 312-avoiding permutations $\pi'$ and $\pi''$ with $|\pi'|=m-1$ and $|\pi''|=n-m$. Figure~\ref{fig:312-avoiding} would be helpful to see the structure of the permutation $\pi$. Hence, we have the following recursive formulas.

\begin{figure}
\label{fig:312-avoiding}
\begin{center}
\begin{tikzpicture}[scale=0.5]
\draw[gray, dashed] (0,0) grid (10,10);
\filldraw[fill=white, draw=black] (0,1) rectangle (4,5);
\node at (2,3) {$\pi'$};
\filldraw (5,0) circle (2pt);
\node at (6,.5) {$\pi_m=1$};
\filldraw[fill=white, draw=black] (6,6) rectangle (10,10);
\node at (8,8) {$\pi''$};
\end{tikzpicture}
\end{center}
\caption{The structure of the 312-avoiding permutation $\pi=12[21[\pi',1],\pi'']$.}
\end{figure}
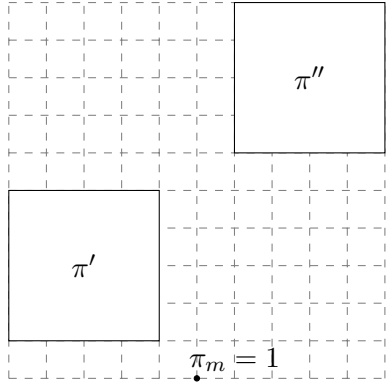

\begin{thm}
\label{312avoiding}
Let $\pi$ be a 312-avoiding permutation in $\fS_n$. Then, $\pi=12[21[\pi',1],\pi'']$ for $\pi'$ and $\pi''$ with $|\pi'|=m-1$ and $|\pi''|=n-m$. Moreover,
\begin{enumerate}[(i)]
\item $\diam(G_{\pi})=\diam(G_{\pi'})+\diam(G_{\pi''})+(m-1)(\ell(\pi')+\ell(\pi''))+\ell(\pi')(\ell(\pi'')-1)$
\item $\diam(C_{\pi})=\diam(C_{\pi'})+\diam(C_{\pi''})+\ell(\pi')$
\item $\diam(B_{\pi})=\diam(B_{\pi'})+\diam(B_{\pi''})+(m-1)(\ell(\pi')+\ell(\pi''))+\ell(\pi')(\ell(\pi'')-2)$
\end{enumerate}
\end{thm}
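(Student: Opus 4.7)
The plan is to derive all three identities by combining Theorem~\ref{thm:12_formula} (for 12-inflations) with Corollary~\ref{cor:21_formula} (for 21-inflations of the form $21[\alpha,1]$). First I would establish the structural decomposition that sits at the start of the statement. Let $m$ be the position of $1$ in $\pi$. For any $i<m<j$, the entries $\pi_i,\pi_m=1,\pi_j$ would form a $312$ pattern the moment $\pi_i>\pi_j$, so $312$-avoidance forces $\pi_i<\pi_j$ for all such pairs. Hence the first $m-1$ letters of $\pi$ hold exactly the values $\{2,\ldots,m\}$, which, standardized, form a $312$-avoiding $\pi'\in\fS_{m-1}$, and the last $n-m$ letters hold $\{m+1,\ldots,n\}$, standardizing to a $312$-avoiding $\pi''\in\fS_{n-m}$. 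This gives $\pi=12[21[\pi',1],\pi'']$.

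Next I would compute the length of the middle piece $\alpha:=21[\pi',1]$. The block realizing $\pi'$ sits entirely above the trailing $1$, contributing $m-1$ new inversions with that $1$, while the inversions internal to the block give $\ell(\pi')$ more, so $\ell(\alpha)=\ell(\pi')+(m-1)$.

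Then, writing $\pi=12[\alpha,\pi'']$, Theorem~\ref{thm:12_formula} expresses each of $\diam(G_\pi)$, $\diam(C_\pi)$, $\diam(B_\pi)$ in terms of the corresponding diameters for $\alpha$ and $\pi''$ together with the product $\ell(\alpha)\ell(\pi'')$. Applying Corollary~\ref{cor:21_formula} to $\alpha=21[\pi',1]$ with $|\pi'|=m-1$ replaces each diameter for $\alpha$ with an expression in the diameter for $\pi'$, $\ell(\pi')$, and $m$. Substituting $\ell(\alpha)=\ell(\pi')+(m-1)$ and expanding yields, for instance,
\[
\diam(G_\pi)=\diam(G_{\pi'})+\ell(\pi')(m-2)+\diam(G_{\pi''})+\bigl(\ell(\pi')+m-1\bigr)\ell(\pi''),
\]
which rearranges to $\diam(G_{\pi'})+\diam(G_{\pi''})+(m-1)\bigl(\ell(\pi')+\ell(\pi'')\bigr)+\ell(\pi')\bigl(\ell(\pi'')-1\bigr)$. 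The computations for $\diam(C_\pi)$ and $\diam(B_\pi)$ are entirely analogous, using the $C$- and $B$-versions of both recursions.

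I do not expect a real obstacle, since the theorem is a direct composition of two recursions already in hand; the main thing to be careful about is the inversion count $\ell(\alpha)=\ell(\pi')+(m-1)$ and the boundary cases $m=1$ or $m=n$, where $\pi'$ or $\pi''$ is empty. In those cases the corresponding diameter and length terms vanish, and the formulas collapse cleanly to what Theorem~\ref{thm:12_formula} and Corollary~\ref{cor:21_formula} produce directly.
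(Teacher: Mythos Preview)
Your proposal is correct and follows essentially the same approach as the paper: apply Theorem~\ref{thm:12_formula} to $\pi=12[\alpha,\pi'']$ with $\alpha=21[\pi',1]$, then feed in Corollary~\ref{cor:21_formula} for $\alpha$ together with $\ell(\alpha)=\ell(\pi')+m-1$, and simplify. The only cosmetic difference is that the paper places the structural decomposition argument in the discussion preceding the theorem rather than inside the proof, and it does not separately comment on the boundary cases $m=1$ or $m=n$.
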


\begin{proof}
By Theorem~\ref{thm:12_formula} and Corollary~\ref{cor:21_formula}, we see that the diameter of $G_{\pi}$ is
\begin{align*}
\diam(G_{\pi})&=\diam(G_{21[\pi',1]})+\diam(G_{\pi''})+\ell(21[\pi',1])\ell(\pi'')\\
&=\diam(G_{\pi'})+\diam(G_{\pi''})+(|\pi'|-1)\ell(\pi')+\ell(\pi'')(|\pi'|+\ell(\pi'))\\
&=\diam(G_{\pi'})+\diam(G_{\pi''})+(m-1)(\ell(\pi')+\ell(\pi''))+\ell(\pi')(\ell(\pi'')-1).
\end{align*}
We also have the diameter of $C_{\pi}$ as follows.
\begin{align*}
\diam(C_{\pi})&=\diam(C_{21[\pi',1]})+\diam(C_{\pi''})\\
&=\diam(C_{\pi'})+\diam(C_{\pi''})+\ell(\pi').
\end{align*}
Lastly, we see that the diameter of $B_{\pi}$ is
\begin{align*}
\diam(B_{\pi})&=\diam(B_{21[\pi',1]})+\diam(B_{\pi''})+\ell(21[\pi',1])\ell(\pi'')\\
&=\diam(B_{\pi'})+\diam(B_{\pi''})+(|\pi'|-2)\ell(\pi')+\ell(\pi'')(|\pi'|+\ell(\pi'))\\
&=\diam(B_{\pi'})+\diam(B_{\pi''})+(m-1)(\ell(\pi')+\ell(\pi''))+\ell(\pi')(\ell(\pi'')-2).
\end{align*}
\end{proof}

\subsection{Diameters of the Graphs for 231-Avoiding Permutations}

Let $\pi$ be a 231-avoiding permutation with $|\pi|=n$. 
Let $m\in[n]$ be such that $\pi_m=n$. Since $\pi$ is 231-avoiding, we should have $\pi_i<\pi_j$ for all $i<m<j$. Thus, we can write $\pi$ as 
\begin{equation*}
\pi=12[\pi',21[1,\pi'']]
\end{equation*}
for two 231-avoiding permutations $\pi'$ and $\pi''$ with $|\pi'|=m-1$ and $|\pi''|=n-m$. Figure~\ref{fig:231-avoiding} would be helpful to see the structure of the permutation $\pi$. Hence, we have the following recursive formulas.

\begin{figure}
\label{fig:231-avoiding}
\begin{center}
\begin{tikzpicture}[scale=0.5]

\draw[gray, dashed] (0,0) grid (10,10);
\filldraw[fill=white, draw=black] (0,0) rectangle (4,4);
\node at (2,2) {$\pi'$};
\filldraw (5,10) circle (2pt);
\node at (6,9.5) {$\pi_m=n$};
\filldraw[fill=white, draw=black] (6,5) rectangle (10,9);
\node at (8,7) {$\pi''$};
\end{tikzpicture}
\end{center}
\caption{The structure of the 231-avoiding permutation $\pi=12[\pi',21[1,\pi'']]$.}
\end{figure}
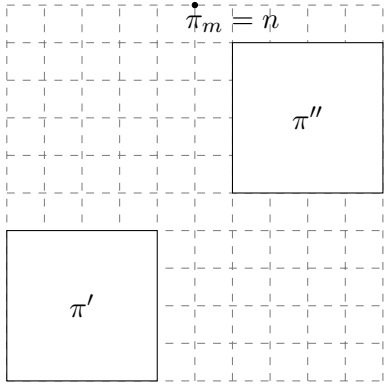

\begin{thm}
\label{thm:231avoiding}
Let $\pi$ be a 231-avoiding permutation in $\fS_n$. Then, $\pi=12[\pi',21[1,\pi'']]$ for $\pi'$ and $\pi''$ with $|\pi'|=m-1$ and $|\pi''|=n-m$. Moreover,
\begin{enumerate}[(i)]
\item $\diam(G_{\pi})=\diam(G_{\pi'})+\diam(G_{\pi''})+(n-m)(\ell(\pi')+\ell(\pi''))+\ell(\pi'')(\ell(\pi')-1)$
\item $\diam(C_{\pi})=\diam(C_{\pi'})+\diam(C_{\pi''})+\ell(\pi'')$
\item $\diam(B_{\pi})=\diam(B_{\pi'})+\diam(B_{\pi''})+(n-m)(\ell(\pi')+\ell(\pi''))+\ell(\pi'')(\ell(\pi')-2)$
\end{enumerate}
\end{thm}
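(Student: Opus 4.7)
The plan is to mirror the proof of Theorem~\ref{312avoiding}, combining Theorem~\ref{thm:12_formula} on the outer 12-inflation with the recursion on the inner 21-inflation. The subtlety is that the inner block here is $21[1,\pi'']$ rather than $21[\pi',1]$, so Corollary~\ref{cor:21_formula} does not apply verbatim; I will bridge this with the 180-degree rotation symmetry of Proposition~\ref{prop:dihedral}. Once that bridge is in place the remainder is arithmetic.

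First I would justify the structural claim $\pi=12[\pi',21[1,\pi'']]$. Let $m$ be the position with $\pi_m=n$. If there were $i<m<j$ with $\pi_i>\pi_j$, then $\pi_i\pi_m\pi_j$ would form a 231 pattern, contradicting the hypothesis. Consequently every value appearing in a position before $m$ is strictly smaller than every value appearing after $m$, forcing positions $1,\dots,m-1$ to carry the values $\{1,\dots,m-1\}$ and positions $m+1,\dots,n$ to carry $\{m,\dots,n-1\}$. Both subwords are automatically 231-avoiding as subpatterns of $\pi$, and reading the box diagram off gives the claimed decomposition, with $|\pi'|=m-1$ and $|\pi''|=n-m$.

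Next I would apply Theorem~\ref{thm:12_formula} with $\alpha=\pi'$ and $\beta=21[1,\pi'']$, obtaining each of $\diam(G_\pi)$, $\diam(C_\pi)$, $\diam(B_\pi)$ as a sum of $\diam(X_{\pi'})$, $\diam(X_{21[1,\pi'']})$, and (for $G$ and $B$) a shift contribution $\ell(\pi')\ell(21[1,\pi''])$. To evaluate the middle summand I observe that $r_{180^\circ}(21[1,\pi''])=21[r_{180^\circ}(\pi''),1]$: rotating the box diagram by $180^\circ$ sends the upper-left singleton to the lower-right singleton and carries the lower-right $\pi''$-block to an upper-left block whose internal pattern is $r_{180^\circ}(\pi'')$. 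Proposition~\ref{prop:dihedral} then gives $\diam(X_{21[1,\pi'']})=\diam(X_{21[r_{180^\circ}(\pi''),1]})$ for $X\in\{G,C,B\}$, and Corollary~\ref{cor:21_formula} applied to the right-hand side reduces these to the diameters of $X_{r_{180^\circ}(\pi'')}$; by Proposition~\ref{prop:dihedral} applied once more, these agree with $\diam(X_{\pi''})$, and the lengths also agree since $\ell(r_{180^\circ}(\pi''))=\ell(\pi'')$.

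Finally I would substitute, using $|\pi''|=n-m$ and $\ell(21[1,\pi''])=\ell(\pi'')+(n-m)$ (the prepended maximum inverts with each entry of $\pi''$), and then collect terms. The only potential obstacle is the bookkeeping of rearranging the combined coefficients into the claimed symmetric form $(n-m)(\ell(\pi')+\ell(\pi''))+\ell(\pi'')(\ell(\pi')-c)$ with $c=1$ for $G$ and $c=2$ for $B$; for $C$ the formula collapses to $\diam(C_{\pi'})+\diam(C_{\pi''})+\ell(\pi'')$ with no length product to manage. No genuinely new graph-theoretic input is required beyond Theorem~\ref{thm:12_formula}, Corollary~\ref{cor:21_formula}, and Proposition~\ref{prop:dihedral}.
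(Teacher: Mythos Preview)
Your proposal is correct and uses the same three ingredients as the paper: Theorem~\ref{thm:12_formula}, Corollary~\ref{cor:21_formula}, and the $180^\circ$ symmetry from Proposition~\ref{prop:dihedral}. The only organizational difference is that the paper applies $r_{180^\circ}$ to the whole permutation $\pi$ at once---obtaining a $312$-avoiding $\sigma=12[21[\sigma'',1],\sigma']$ and then citing Theorem~\ref{312avoiding} directly---whereas you first split off the outer $12$-inflation and rotate only the inner block $21[1,\pi'']$ before invoking Corollary~\ref{cor:21_formula}; the arithmetic comes out identical either way.
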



\begin{proof}
Suppose that $\pi\in\fS_n$ avoids 231. This means that $\pi=12[\pi',21[1,\pi'']]$. Then, $\sigma=r_{180^{\circ}}(\pi)=12[21[\sigma'',1],\sigma']$ where $r_{180^{\circ}}(\pi')=\sigma'$ and $r_{180^{\circ}}(\pi'')=\sigma''$. By Proposition~\ref{prop:dihedral} we know that $G_{\pi}$, $C_{\pi}$ and $B_{\pi}$ are isomorphic to $G_{\sigma}$, $C_{\sigma}$ and $B_{\sigma}$, 
$G_{\pi'}$, $C_{\pi'}$ and $B_{\pi'}$ are isomorphic to $G_{\sigma'}$, $C_{\sigma'}$ and $B_{\sigma'}$ and 
$G_{\pi''}$, $C_{\pi''}$ and $B_{\pi''}$ are isomorphic to $G_{\sigma''}$, $C_{\sigma''}$ and $B_{\sigma''}$. Together with Theorem~\ref{312avoiding}, we are done. 
\end{proof}


\section{Connection to Hyperplane Arrangements}
\label{sec:RRconjecture}
In this section we connect our results to Conjecture~\ref{conj:RR}, Rienner and Roichman's conjecture. We prove that all permutations that avoid 231 or 312 satisfy the conjecture, and that these permutations achieve the upper bound of the conjecture. In addition, we found a set of permutations that achieve the lower bound of the conjecture. 

As in Section~\ref{sec:pre}, for an element $\pi\in \fS_n$, we define $I_2(\pi)$ to be the set of disjoint pairs of inversions $((i,j),(k,\ell))$ of $\pi$ and define $I_3(\pi)$ to be the set of all triples of inversions $((i,j),(i,k),(j,k))$ of $\pi$. Then $L_2(\pi)$ can be interpreted as the union of $I_2(\pi)$ and $I_3(\pi)$. Note that $|I_3(\pi)|$ is the number of 321 patterns in $\pi$. 

\begin{exmp}
Consider $\pi=4312\in \fS_4$. There are five inversions, those are $(1,2), (1,3), (1,4), (2,3)$, and $(2,4)$. Observe that there are two pairs of disjoint inversions, which are $((1,3), (2,4))$ and $((1,4) , (2,3))$, and $|I_2(\pi)|=2$. Note also that there are two 321 patterns, which are 431 and 432, and $|I_3(\pi)|=2$, thus $|L_2(\pi)|=|I_2(\pi)|+|I_3(\pi)|=4$.
\label{ex:L_2}
\end{exmp}

\begin{conj}[Rienner and Roichman]
\label{conj:RR}
For $\pi\in\fS_n$,
\begin{equation*}
\frac{1}{2}|L_2(\pi)|\leq \diam(G_{\pi})\leq |L_2(\pi)|.
\end{equation*}
\end{conj}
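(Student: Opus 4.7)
The plan is to establish Conjecture~\ref{conj:RR} in exactly those settings where Sections~\ref{sec:12-inflations}--\ref{sec:pattern_avoidance} supply a recursive diameter formula, namely 12-inflations, the 21-inflations $21[\alpha,1]$, and (via iteration) 312- or 231-avoiding permutations. The uniform idea is to check that $|L_2(\pi)|$ decomposes under an inflation by exactly the same additive correction term that the diameter does, so that both sides of the conjecture propagate by induction. Equality in the upper bound should then propagate too, yielding the claim from the abstract that 312- and 231-avoiders actually attain $\diam(G_\pi)=|L_2(\pi)|$.

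The key bookkeeping is as follows. For $\pi=12[\alpha,\beta]$ every inversion of $\pi$ sits entirely inside the $\alpha$-block or entirely inside the $\beta$-block (the blocks are value-separated), so no 321-pattern crosses blocks and every pair consisting of one $\alpha$-inversion and one $\beta$-inversion is automatically disjoint; this yields
\[
|L_2(12[\alpha,\beta])| = |L_2(\alpha)|+|L_2(\beta)|+\ell(\alpha)\ell(\beta),
\]
matching the correction $\ell(\alpha)\ell(\beta)$ in Theorem~\ref{thm:12_formula}. For $\pi=21[\alpha,1]$ with $|\alpha|=a$, the trailing $1$ produces $a$ new inversions all sharing the endpoint $a+1$ (no pair among them disjoint), contributes $\ell(\alpha)(a-2)$ disjoint pairs with $\alpha$-inversions, and extends each $\alpha$-inversion to a unique 321-triple, giving
\[
|L_2(21[\alpha,1])| = |L_2(\alpha)| + \ell(\alpha)(a-1),
\]
which matches the correction in Corollary~\ref{cor:21_formula}.

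Given these two identities, an induction on $|\pi|$ shows that if $\tfrac12|L_2(\alpha)|\le\diam(G_\alpha)\le|L_2(\alpha)|$ and the analogous bound holds for $\beta$, then the conjecture holds for $12[\alpha,\beta]$ and for $21[\alpha,1]$; moreover equality on the upper side propagates because the extra term is identical on both sides. Combining this with the decomposition $\pi=12[21[\pi',1],\pi'']$ available for every 312-avoider (Theorem~\ref{312avoiding}), the conjecture holds for all 312-avoiding permutations and the upper bound is attained. The 231-avoiding case then follows instantly from the $r_{180^\circ}$-symmetry of Proposition~\ref{prop:dihedral}, since $r_{180^\circ}$ exchanges 312-avoiders with 231-avoiders and the set $L_2$ is just the disjoint-pair/triple inversion data, which is clearly preserved by the relabeling $i\mapsto n-i$ of indices and values.

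The main obstacle I expect is the case of $21[\alpha,\iota_b]$ with $b>1$, where Theorem~\ref{thm:21-bounds} gives only two-sided diameter bounds with a gap of $\binom{a}{2}\binom{b}{2}$. To push the conjecture through here I would compute $|L_2(21[\alpha,\iota_b])|$ explicitly by cataloging the new disjoint pairs coming from the $b$ copies of each small value and the new 321-triples they introduce, and then verify that both the upper and lower diameter bounds of Theorem~\ref{thm:21-bounds} still lie inside $[\tfrac12|L_2(\pi)|,|L_2(\pi)|]$. This is a finite check rather than a recursive one, which is why it is the delicate part. For the companion claim of an infinite family attaining the \emph{lower} bound (Theorem~\ref{thm:low_bound}), I would look for permutations whose reduced-word graphs are as ``efficient'' as possible, e.g.\ so that each long braid or commutation edge represents a large chunk of $L_2$; natural candidates are families where the diameter achieves the minimum possible length under the constraints of Lemma~\ref{lem:diam_bounds}.
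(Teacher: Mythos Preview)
Your proposal is correct and follows essentially the same route as the paper for the upper-bound material: the paper computes $|L_2(12[\alpha,\beta])|=|L_2(\alpha)|+|L_2(\beta)|+\ell(\alpha)\ell(\beta)$ and $|L_2(21[\alpha,\iota_b])|=|L_2(\alpha)|+\ell(\alpha)(a-1)b+2\binom{a}{2}\binom{b}{2}$, then feeds these into Theorem~\ref{thm:12_formula} and Theorem~\ref{thm:21-bounds} exactly as you describe, and the 312/231 cases are handled by the same induction-plus-$r_{180^\circ}$ argument you sketch.

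The one place your plan is genuinely underspecified compared to the paper is the lower-bound family (Theorem~\ref{thm:low_bound}). You describe only a heuristic search for ``efficient'' graphs, whereas the paper names the family explicitly: $\pi=12[\iota_c,21[\iota_a,\iota_b],\iota_d]$ with $a,b\ge 2$. The mechanism is not a length-counting argument via Lemma~\ref{lem:diam_bounds} but a structural one: since $\alpha=\iota_a$ and $\beta=\iota_b$ have no reduced word content, $H_{\iota_a,\iota_b}$ is just $\Ballot_{a,b}$ with ballot-commutation edges, and the statistic $\ballot(\cdot)$ makes it a graded poset of rank $\binom{a}{2}\binom{b}{2}$ with unique $\hat 0=1^b2^b\cdots a^b$ and $\hat 1=(12\cdots a)^b$. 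The diameter is then exactly the rank (two paths through $\hat 0$ and through $\hat 1$ average to the rank), and a direct count gives $|L_2(21[\iota_a,\iota_b])|=2\binom{a}{2}\binom{b}{2}$ since $I_3=\varnothing$. Padding by $\iota_c,\iota_d$ changes neither side. This poset observation is the missing idea in your outline.
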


\subsection{12-Inflations or 21-Inflations}

We first confirm the conjecture for 12-inflations of two permutations $\alpha$ and $\beta$ and for 21-inflations of a permutation $\alpha$ and $\iota_b=12\dots b$. 

Suppose $\pi=12[\alpha,\beta]=\alpha_1\alpha_2\dots\alpha_a(\beta_1+a)(\beta_2+a)\dots(\beta_b+a)\in \fS_{a+b}$ is an 12-inflation of two permutations $\alpha\in \fS_a$ and $\beta\in \fS_b$. The disjoint pairs of inversions in $\pi$ fall under one of the three cases: (i) disjoint pairs in $\alpha$; (ii) disjoint pairs in $\beta$; (iii) one inversion in $\alpha$ and the other inversion in $\beta$. Thus, we have 
\begin{equation}
\label{eq:I_2(12)}
|I_2(\pi)|=|I_2(12[\alpha,\beta])|=|I_2(\alpha)|+|I_2(\beta)|+\ell(\alpha)\ell(\beta).
\end{equation}
Since the 321 patterns in $\pi$ are either 321 patterns in $\alpha$ or 321 patterns in $\beta$, we have
\begin{equation}
\label{eq:I_3(12)}
|I_3(\pi)|=|I_3(12[\alpha,\beta])|=|I_3(\alpha)|+|I_3(\beta)|.
\end{equation}
By equation~\eqref{eq:I_2(12)} and equation~\eqref{eq:I_3(12)}, we see that
\begin{align*}
|L_2(\pi)|&=|I_2(\pi)|+|I_3(\pi)|\\
&=|I_2(\alpha)|+|I_2(\beta)|+\ell(\alpha)\ell(\beta)+|I_3(\alpha)|+|I_3(\beta)|\\
&=|L_2(\alpha)|+|L_2(\beta)|+\ell(\alpha)\ell(\beta)
\end{align*}

The observation above together with the diameter formula $\diam(G_{\pi})=\diam(G_{\alpha})+\diam(G_{\beta})+\ell(\alpha)\ell(\beta)$ in Theorem~\ref{thm:12_formula} prove the following proposition.

\begin{prop}
If Conjecture~\ref{conj:RR} is true for both $\alpha$ and $\beta$, then the conjecture is also true for the 12-inflation of $\alpha$ and $\beta$. Moreover, if both $\diam(G_{\alpha})$ and $\diam(G_{\beta})$ hit the upper bound, then $\diam(G_{12[\alpha,\beta]})$ also hits the upper bound. 
\end{prop}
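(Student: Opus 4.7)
The plan is to chain together the two identities that are already in hand: Theorem~\ref{thm:12_formula}(i) gives the diameter recursion $\diam(G_{12[\alpha,\beta]})=\diam(G_{\alpha})+\diam(G_{\beta})+\ell(\alpha)\ell(\beta)$, and the display immediately preceding the proposition gives the matching recursion $|L_2(12[\alpha,\beta])|=|L_2(\alpha)|+|L_2(\beta)|+\ell(\alpha)\ell(\beta)$. Both recursions share the same ``cross term'' $\ell(\alpha)\ell(\beta)$, so the argument reduces to adding the hypothesized inequalities for $\alpha$ and $\beta$ and tacking this term on both sides.

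For the upper bound, I would write
$$\diam(G_{12[\alpha,\beta]}) \;=\; \diam(G_{\alpha})+\diam(G_{\beta})+\ell(\alpha)\ell(\beta) \;\leq\; |L_2(\alpha)|+|L_2(\beta)|+\ell(\alpha)\ell(\beta) \;=\; |L_2(12[\alpha,\beta])|,$$
using the hypotheses $\diam(G_{\alpha})\leq |L_2(\alpha)|$ and $\diam(G_{\beta})\leq |L_2(\beta)|$. The ``moreover'' clause then follows by replacing both $\leq$ signs by equalities; the same chain of identities yields $\diam(G_{12[\alpha,\beta]})= |L_2(12[\alpha,\beta])|$, so $12[\alpha,\beta]$ also attains the upper bound.

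For the lower bound, I would apply $\diam(G_{\alpha})\geq \tfrac{1}{2}|L_2(\alpha)|$ and $\diam(G_{\beta})\geq \tfrac{1}{2}|L_2(\beta)|$ to obtain
$$\diam(G_{12[\alpha,\beta]}) \;\geq\; \tfrac{1}{2}|L_2(\alpha)|+\tfrac{1}{2}|L_2(\beta)|+\ell(\alpha)\ell(\beta).$$
The target $\tfrac{1}{2}|L_2(12[\alpha,\beta])|$ equals $\tfrac{1}{2}|L_2(\alpha)|+\tfrac{1}{2}|L_2(\beta)|+\tfrac{1}{2}\ell(\alpha)\ell(\beta)$, and since $\ell(\alpha)\ell(\beta)\geq \tfrac{1}{2}\ell(\alpha)\ell(\beta)$ trivially, the required inequality holds.

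Honestly, there is no real obstacle: once the two recursions line up with the same cross term, the proof is a two-line computation in each direction. The only mild subtlety is noticing that in the lower-bound case the cross term contributes $\ell(\alpha)\ell(\beta)$ to the diameter recursion but only $\tfrac{1}{2}\ell(\alpha)\ell(\beta)$ to the bound, which gives ``slack'' and is what makes the induction go through painlessly. I would therefore keep the write-up short, present the two displayed inequalities above, and remark on the equality case to cover the ``moreover'' clause.
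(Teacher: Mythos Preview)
Your proposal is correct and matches the paper's proof essentially line for line: both combine the diameter recursion from Theorem~\ref{thm:12_formula}(i) with the $|L_2|$ recursion to get the upper bound, the lower bound (using $\ell(\alpha)\ell(\beta)\ge\tfrac12\ell(\alpha)\ell(\beta)$), and the equality case. The only cosmetic difference is that the paper writes the lower-bound chain directly with $\tfrac12\ell(\alpha)\ell(\beta)$ rather than separately noting the slack you point out.
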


\begin{proof}
Suppose $\frac{1}{2}|L_2(\alpha)|\leq \diam(G_{\alpha})\leq |L_2(\alpha)|$ and $\frac{1}{2}|L_2(\beta)|\leq \diam(G_{\beta})\leq |L_2(\beta)|$. Then we see that
\begin{align*}
\diam(G_{\pi})&=\diam(G_{\alpha})+\diam(G_{\beta})+\ell(\alpha)\ell(\beta)\\
&\leq |L_2(\alpha)|+|L_2(\beta)|+\ell(\alpha)\ell(\beta)=|L_2(\pi)|
\end{align*}
and
\begin{align*}
\diam(G_{\pi})&=\diam(G_{\alpha})+\diam(G_{\beta})+\ell(\alpha)\ell(\beta)\\
&\geq \frac{1}{2}|L_2(\alpha)|+\frac{1}{2}|L_2(\beta)|+\frac{1}{2}\ell(\alpha)\ell(\beta)=\frac{1}{2}|L_2(\pi)|.
\end{align*}
In the case of $\diam(G_{\alpha})=|L_2(\alpha)|$ and $\diam(G_{\beta})=|L_2(\beta)|$, we see that
\begin{equation*}
\diam(G_{\pi})=\diam(G_{\alpha})+\diam(G_{\beta})+\ell(\alpha)\ell(\beta)=|L_2(\alpha)|+|L_2(\beta)|+\ell(\alpha)\ell(\beta)=|L_2(\pi)|.
\end{equation*}
\end{proof}

Suppose $\pi=21[\alpha,\iota_b]=(\alpha_1+b)(\alpha_2+b)\dots(\alpha_a+b)12\dots b\in \fS_{a+b}$ is the 21-inflation of a permutation $\alpha\in \fS_a$ and $\iota_b=12\dots b$. The disjoint pairs of inversions in $\pi$ fall under one of the three cases: (i) disjoint pairs in $\alpha$; (ii) one inversion $(i,j)$ in $\alpha$ and the other inversion $(r,s)$ for $r\in[a]-\{i,j\}, a+1\leq s\leq a+b$; (iii) two disjoint inversions $((i,r), (j,s))$ for $i,j\in[a]$ and $a+1\leq r,s\leq a+b$. Thus, we have 
\begin{equation}
\label{eq:I_2(21)}
|I_2(\pi)|=|I_2(21[\alpha,\iota_b])|=|I_2(\alpha)|+\ell(\alpha)(a-2)b+2{a\choose 2}{b\choose2}.
\end{equation}
Since the 321 patterns in $\pi$ are either 321 patterns in $\alpha$ or a subword $(\alpha_i+b)(\alpha_j+b)k$ of $\pi$ where $(i,j)$ is an inversion in $\alpha$ and $a+1\leq k\leq a+b$, we have
\begin{equation}
\label{eq:I_3(21)}
|I_3(\pi)|=|I_3(21[\alpha,\iota_b])|=|I_3(\alpha)|+\ell(\alpha)b.
\end{equation}
By equation~\eqref{eq:I_2(21)} and equation~\eqref{eq:I_3(21)}, we see that
\begin{align*}
|L_2(\pi)|&=|I_2(\pi)|+|I_3(\pi)|\\
&=|I_2(\alpha)|+\ell(\alpha)(a-2)b+2{a\choose 2}{b\choose2}+|I_3(\alpha)|+\ell(\alpha)b\\
&=|L_2(\alpha)|+\ell(\alpha)(a-1)b+2{a\choose2}{b\choose2}.
\end{align*}

The observation above together with the upper and lower bounds for the diameters of $G_{\pi}$, $\diam( G_{\alpha}) +  \ell(\alpha)b(a-1)+\binom{a}{2}\binom{b}{2}\leq \diam (G_{\pi})\leq \diam( G_{\alpha}) +  \ell(\alpha)b(a-1)+2\binom{a}{2}\binom{b}{2}$ in Theorem~\ref{thm:21-bounds} prove the following proposition.

\begin{prop}
If Conjecture~\ref{conj:RR} is true for $\alpha$, then the conjecture is also true for the 21-inflation of $\alpha$ and $\iota_b$ for any $b\geq 1$. 
\end{prop}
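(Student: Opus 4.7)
The plan is straightforward: combine the diameter bounds from Theorem~\ref{thm:21-bounds} with the identity
\[
|L_2(\pi)| = |L_2(\alpha)| + \ell(\alpha)(a-1)b + 2\binom{a}{2}\binom{b}{2}
\]
established in the display just before the statement, then apply the hypothesis $\tfrac{1}{2}|L_2(\alpha)|\leq \diam(G_{\alpha})\leq |L_2(\alpha)|$ on each end.

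For the upper bound, I would take the upper inequality from Theorem~\ref{thm:21-bounds}(i),
\[
\diam(G_{\pi}) \leq \diam(G_{\alpha}) + \ell(\alpha)b(a-1) + 2\binom{a}{2}\binom{b}{2},
\]
and substitute $\diam(G_{\alpha})\leq |L_2(\alpha)|$. The right-hand side becomes exactly $|L_2(\pi)|$ by the identity above, giving $\diam(G_{\pi})\leq |L_2(\pi)|$. For the lower bound, I would start with
\[
\diam(G_{\pi}) \geq \diam(G_{\alpha}) + \ell(\alpha)b(a-1) + \binom{a}{2}\binom{b}{2},
\]
substitute $\diam(G_{\alpha})\geq \tfrac{1}{2}|L_2(\alpha)|$, and note that the resulting bound
\[
\tfrac{1}{2}|L_2(\alpha)| + \ell(\alpha)b(a-1) + \binom{a}{2}\binom{b}{2}
\]
exceeds $\tfrac{1}{2}|L_2(\pi)| = \tfrac{1}{2}|L_2(\alpha)| + \tfrac{1}{2}\ell(\alpha)b(a-1) + \binom{a}{2}\binom{b}{2}$ simply because $\ell(\alpha)b(a-1)\geq \tfrac{1}{2}\ell(\alpha)b(a-1)$.

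There is no genuine obstacle here — the hard work was already done in Theorem~\ref{thm:21-bounds} and in the enumeration of $|L_2(\pi)|$ via the case analysis of disjoint inversion pairs and 321-patterns that precedes the statement. The only thing worth double-checking while writing it up is that the lower bound from Theorem~\ref{thm:21-bounds} (which has the coefficient $1$ on $\binom{a}{2}\binom{b}{2}$) still dominates $\tfrac{1}{2}$ of $|L_2(\pi)|$ (which also has coefficient $1$ on $\binom{a}{2}\binom{b}{2}$ after halving the $2$); this works out exactly, with room to spare coming from the $\ell(\alpha)b(a-1)$ term. So the proof should fit comfortably in a short paragraph, mirroring the style of the preceding 12-inflation proposition.
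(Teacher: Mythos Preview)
Your proposal is correct and follows exactly the same approach as the paper: apply the two-sided bounds from Theorem~\ref{thm:21-bounds}(i), use the displayed formula $|L_2(\pi)|=|L_2(\alpha)|+\ell(\alpha)(a-1)b+2\binom{a}{2}\binom{b}{2}$, and substitute the hypothesis on $\diam(G_{\alpha})$ on each side. In fact you are slightly more careful than the paper, which writes an equality in the lower-bound display where (as you correctly observe) the relation is really an inequality $\geq$, with the slack coming from the $\ell(\alpha)b(a-1)$ term.
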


\begin{proof}
Suppose $\frac{1}{2}|L_2(\alpha)|\leq \diam(G_{\alpha})\leq |L_2(\alpha)|$. Then we see that
\begin{align*}
\diam(G_{\pi})&\leq \diam( G_{\alpha}) +  \ell(\alpha)b(a-1)+2\binom{a}{2}\binom{b}{2}\\
&\leq |L_2({\alpha})| +  \ell(\alpha)b(a-1)+2\binom{a}{2}\binom{b}{2}=|L_2(\pi)|
\end{align*}
and
\begin{align*}
\diam(G_{\pi})&\geq \diam( G_{\alpha}) +  \ell(\alpha)b(a-1)+\binom{a}{2}\binom{b}{2}\\
&\geq \frac{1}{2}|L_2(\alpha)|+\ell(\alpha)b(a-1)+\binom{a}{2}\binom{b}{2}=\frac{1}{2}|L_2(\pi)|.
\end{align*}
\end{proof}

Let us consider a special case $\pi=21[\alpha,1]$  when $b=1$. Observe that Conjecture~\ref{conj:RR} is true for $\pi$ since it is a special case of the previous proposition. Moreover, we get the next proposition by the diameter formula $\diam(G_{\pi})=\diam(G_{\alpha})+\ell(\alpha)(a-1)$ in Corollary~\ref{cor:21_formula}.

\begin{prop}
If $\diam(G_{\alpha})$ hits the upper bound, then $\diam(G_{21[\alpha,1]})$ also hits the upper bound. 
\end{prop}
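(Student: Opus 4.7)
The plan is to combine Corollary~\ref{cor:21_formula} with the formula for $|L_2(21[\alpha,\iota_b])|$ derived just above, specialized to $b=1$. This reduces the proposition to a one-line computation.

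First, I would record the specialization of $|L_2(21[\alpha,\iota_b])|=|L_2(\alpha)|+\ell(\alpha)(a-1)b+2\binom{a}{2}\binom{b}{2}$ to $b=1$. Since $\binom{1}{2}=0$, the last term vanishes, yielding
\begin{equation*}
|L_2(21[\alpha,1])|=|L_2(\alpha)|+\ell(\alpha)(a-1).
\end{equation*}
Next, I would recall Corollary~\ref{cor:21_formula}, which gives the exact identity
\begin{equation*}
\diam(G_{21[\alpha,1]})=\diam(G_{\alpha})+\ell(\alpha)(a-1).
\end{equation*}

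Assuming $\diam(G_{\alpha})=|L_2(\alpha)|$, substituting into the displayed formula for $\diam(G_{21[\alpha,1]})$ and matching it against the displayed formula for $|L_2(21[\alpha,1])|$ gives
\begin{equation*}
\diam(G_{21[\alpha,1]})=|L_2(\alpha)|+\ell(\alpha)(a-1)=|L_2(21[\alpha,1])|,
\end{equation*}
so the upper bound is attained. There is no real obstacle; the substantive work has already been done in establishing the exact diameter formula in Corollary~\ref{cor:21_formula} and in the inversion-counting bookkeeping (equations \eqref{eq:I_2(21)} and \eqref{eq:I_3(21)}) that produced the formula for $|L_2(21[\alpha,\iota_b])|$. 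The only thing to notice is that the $2\binom{a}{2}\binom{b}{2}$ slack, which is responsible for the gap between the upper and lower bounds in Theorem~\ref{thm:21-bounds}, disappears when $b=1$, so the general inequality collapses to an equality and transfers the tightness from $\alpha$ directly to $21[\alpha,1]$.
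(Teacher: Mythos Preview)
Your proposal is correct and follows essentially the same approach as the paper: both specialize the formula $|L_2(21[\alpha,\iota_b])|=|L_2(\alpha)|+\ell(\alpha)(a-1)b+2\binom{a}{2}\binom{b}{2}$ to $b=1$ and combine it with the exact diameter formula from Corollary~\ref{cor:21_formula}. Your write-up is slightly more explicit about why the $\binom{a}{2}\binom{b}{2}$ term vanishes, but the underlying argument is identical.
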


\begin{proof}
Suppose $\diam(G_{\alpha})=|L_2(\alpha)|$. Observe that
\begin{equation*}
\diam(G_{\pi})=\diam(G_{\alpha})+\ell(\alpha)(a-1)=|L_2(\alpha)|+\ell(\alpha)(a-1)=|L_2(\alpha)|
\end{equation*}
and the proof follows.
\end{proof}

\subsection{Pattern Avoiding Permutations}

We show that Conjecture~\ref{conj:RR} is true for any permutations $\pi$ that avoid 231 or 312 patterns. To do this, it is sufficient to show that $\diam(G_{\pi})=|L_2(\pi)|$.

\begin{thm}
\label{thm:312-upper}
If $\pi$ is a 312-avoiding permutation, then $\diam(G_{\pi})=|L_2(\pi)|$.
\end{thm}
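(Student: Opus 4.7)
The plan is a straightforward induction on $n=|\pi|$, leveraging the structural decomposition of 312-avoiding permutations recalled just before Theorem~\ref{312avoiding}: any 312-avoiding $\pi\in\fS_n$ decomposes as $\pi=12[21[\pi',1],\pi'']$, where $m$ is the position of the value $1$ in $\pi$ and $\pi',\pi''$ are 312-avoiding of sizes $m-1$ and $n-m$. Note that $\pi'$ and $\pi''$ inherit the 312-avoidance property because any pattern occurring in a subpermutation occurs in $\pi$ itself. The base case $n\leq 1$ is immediate: $G_{\pi}$ is a single vertex and $|L_2(\pi)|=0=\diam(G_{\pi})$; the empty permutation $\epsilon$ is handled identically.

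For the inductive step, suppose the equality $\diam(G_{\sigma})=|L_2(\sigma)|$ holds for every 312-avoiding $\sigma$ with $|\sigma|<n$. Applied to $\pi'$ and $\pi''$, this gives $\diam(G_{\pi'})=|L_2(\pi')|$ and $\diam(G_{\pi''})=|L_2(\pi'')|$, so both $\pi'$ and $\pi''$ hit the upper bound of Conjecture~\ref{conj:RR}. First invoke the proposition stating that if $\diam(G_{\alpha})=|L_2(\alpha)|$ then $\diam(G_{21[\alpha,1]})=|L_2(21[\alpha,1])|$; this produces $\diam(G_{21[\pi',1]})=|L_2(21[\pi',1])|$, i.e.\ the intermediate permutation $21[\pi',1]$ also hits the upper bound. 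Next invoke the proposition stating that if $\diam(G_{\alpha})$ and $\diam(G_{\beta})$ both hit their upper bounds then $\diam(G_{12[\alpha,\beta]})=|L_2(12[\alpha,\beta])|$; applied with $\alpha=21[\pi',1]$ and $\beta=\pi''$, this yields $\diam(G_{\pi})=|L_2(\pi)|$, completing the induction.

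There is essentially no serious obstacle: the only points requiring care are (i) verifying that the recursion genuinely terminates, which it does because $|\pi'|,|\pi''|<n$ whenever $n\geq 1$ (with empty factors handled by the trivial base case), and (ii) correctly chaining the two preservation propositions in the order dictated by the outer and inner inflations of the decomposition $\pi=12[21[\pi',1],\pi'']$. Both preservation propositions are already established earlier using the exact diameter formula of Theorem~\ref{thm:12_formula} and the exact formula $\diam(G_{21[\alpha,1]})=\diam(G_{\alpha})+\ell(\alpha)(a-1)$ of Corollary~\ref{cor:21_formula}, together with the identities $|L_2(12[\alpha,\beta])|=|L_2(\alpha)|+|L_2(\beta)|+\ell(\alpha)\ell(\beta)$ and $|L_2(21[\alpha,1])|=|L_2(\alpha)|+\ell(\alpha)(a-1)$, so the whole argument amounts to a clean composition of previously proved results.
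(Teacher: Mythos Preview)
Your proof is correct and follows essentially the same inductive approach as the paper, based on the decomposition $\pi=12[21[\pi',1],\pi'']$. The only cosmetic difference is that the paper re-derives the recursion for $|L_2(\pi)|$ directly for this composite decomposition and matches it against the diameter formula of Theorem~\ref{312avoiding}, whereas you more modularly chain the two upper-bound-preservation propositions from Section~\ref{sec:RRconjecture}; the underlying identities and induction are identical.
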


\begin{proof}
As in Section~\ref{sec:pattern_avoidance}, we can write $\pi=12[21[\pi',1],\pi'']$ 
 for two 312-avoiding permutations $\pi'$ and $\pi''$ with $\pi_m=1,|\pi|=n,|\pi'|=m-1$, and $|\pi''|=n-m$.

The disjoint pairs of inversions in $\pi$ fall under one of the five cases: (i) disjoint pairs in $\pi'$; (ii) disjoint pairs in $\pi''$; (iii) one inversion in $\pi'$ and the other inversion in $\pi''$; (iv) one inversion $(r,s)$ in $\pi''$ and the other inversion $(i,m)$ for $i\in[|\pi'|]=[m-1]$; (v) one inversion $(i,j)$ in $\pi'$ and the other inversion $(k,m)$ where $1\leq k\leq m-1$ and $k\notin\{i,j\}$. Thus, we have
\begin{equation}
\label{eq:I_2(312)}
|I_2(\pi)|=|I_2(\pi')|+|I_2(\pi'')|+\ell(\pi')\ell(\pi'')+\ell(\pi'')|\pi'|+\ell(\pi')(|\pi'|-2).
\end{equation}
Since the 321 patterns in $\pi$ are either 321 patterns in $\pi'$ or in $\pi''$ or a subword $(\pi'_i+1)(\pi'_j+1)1$ where $(i,j)$ is an inversion in $\pi'$, we have
\begin{equation}
\label{eq:I_3(312)}
|I_3(\pi)|=|I_3(\pi')|+|I_3(\pi'')|+\ell(\pi').
\end{equation}
By equation~\eqref{eq:I_2(312)} and equation~\eqref{eq:I_3(312)} we see that 
\begin{equation*}
|L_2(\pi)|=|L_2(\pi')|+|L_2(\pi'')|+(|\pi'|-1)\ell(\pi')+\ell(\pi'')(|\pi'|+\ell(\pi')).
\end{equation*}
We will prove the theorem by induction on the size of the permutations. (Base case) If $|\pi|\leq 2$, then $\diam(G_{\pi})=|L_2(\pi)|=0$. (Induction) Suppose $\diam(G_{\pi})=|L_2(\pi)|$ for all permutations with the size less than $n$. Then,
\begin{align*}
\diam(G_{\pi})&=\diam(G_{\pi'})+\diam(G_{\pi''})+(|\pi'|-1)\ell(\pi')+\ell(\pi'')(|\pi'|+\ell(\pi'))\\
&=|L_2(\pi')|+|L_2(\pi'')|+(|\pi'|-1)\ell(\pi')+\ell(\pi'')(|\pi'|+\ell(\pi'))\\
&=|L_2(\pi)|
\end{align*}
and the proof follows.
\end{proof}

\begin{thm}
\label{thm:231-upper}
If $\pi$ is a 231-avoiding permutation, then $\diam(G_{\pi})=|L_2(\pi)|$.
\end{thm}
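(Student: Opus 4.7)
The plan is to mimic the proof of Theorem~\ref{thm:231avoiding} and reduce to the 312-avoiding case via the $180^\circ$ rotation symmetry, rather than re-running the full inductive calculation. The payoff is that everything substantive has already been done: Theorem~\ref{thm:312-upper} handles the diameter, Proposition~\ref{prop:dihedral} identifies the graphs, and only a short combinatorial observation about $L_2$ remains.

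First I would observe that if $\pi\in\fS_n$ avoids $231$, then $\sigma:=r_{180^{\circ}}(\pi)$ avoids $312$. This is because $r_{180^{\circ}}$ is an involution on $\fS_n$ that sends the pattern $231$ to $312$ (as is easy to verify from the formula $\sigma_i=n-\pi_{n-i+1}+1$ in Lemma~\ref{lem:symmetries}): any $312$ pattern in $\sigma$ would pull back to a $231$ pattern in $\pi$. By Proposition~\ref{prop:dihedral} we then get
\begin{equation*}
\diam(G_{\pi})=\diam(G_{\sigma}),
\end{equation*}
and by Theorem~\ref{thm:312-upper} applied to $\sigma$, this common value equals $|L_2(\sigma)|$.

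Next I would show that $|L_2(\pi)|=|L_2(\sigma)|$. The map $(i,j)\mapsto (n-j+1,n-i+1)$ is a bijection between the inversions of $\pi$ and the inversions of $\sigma$, and it carries disjoint pairs to disjoint pairs and inversion triples $((i,j),(i,k),(j,k))$ to inversion triples of the same shape. Equivalently, $r_{180^{\circ}}$ sends $321$-patterns to $321$-patterns (as $321$ is fixed by this rotation), so $|I_3(\pi)|=|I_3(\sigma)|$, while $|I_2(\pi)|=|I_2(\sigma)|$ for the same reason. Hence $|L_2(\pi)|=|I_2(\pi)|+|I_3(\pi)|=|I_2(\sigma)|+|I_3(\sigma)|=|L_2(\sigma)|$, and combining this with the previous display gives $\diam(G_{\pi})=|L_2(\pi)|$, as required.

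There is no real obstacle here; the only thing to be a little careful about is confirming that $r_{180^\circ}$ genuinely preserves the disjointness/triple structure used in the definition of $L_2(\pi)$, which is immediate once one writes down the bijection on inversions. An alternative, slightly longer route would be to re-prove the statement directly by induction using Theorem~\ref{thm:231avoiding}, mirroring the decomposition $\pi=12[\pi',21[1,\pi'']]$ exactly as was done with $\pi=12[21[\pi',1],\pi'']$ in the proof of Theorem~\ref{thm:312-upper}; that would require a parallel computation of $|I_2(\pi)|$ and $|I_3(\pi)|$ in terms of $\pi'$ and $\pi''$, but it yields the same answer. The symmetry argument above is preferable for brevity.
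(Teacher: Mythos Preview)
Your proposal is correct and follows essentially the same approach as the paper: reduce to the 312-avoiding case via the $r_{180^\circ}$ symmetry, invoking Proposition~\ref{prop:dihedral} for the graph isomorphism, Theorem~\ref{thm:312-upper} for the diameter of the rotated permutation, and the observation that $r_{180^\circ}$ preserves both $|I_2|$ and $|I_3|$ (since it fixes the patterns $21$ and $321$). Your explicit bijection on inversions and the remark about the alternative inductive route are nice touches, but the core argument matches the paper's exactly.
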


\begin{proof} Note that because $r_{180^{\circ}}(321)=321$ a 321 pattern in $\pi$ will map to a 321 pattern in $r_{180^{\circ}}(\pi)$, so $|I_3(\pi)|=|I_3(r_{180^{\circ}}(\pi))|$. Also, since $r_{180^{\circ}}(21)=21$ the any pair of disjoint inversions will map to another pair of disjoint inversions in $r_{180^{\circ}}(\pi)$, so $|I_2(\pi)|=|I_2(r_{180^{\circ}}(\pi))|$. This means that $|L_2(\pi)|=|L_2(r_{180^{\circ}}(\pi))|$. 

Consider a permutation $\pi$ that avoids 231. Because  $r_{180^{\circ}}(231)=312$ we know that  $\sigma=r_{180^{\circ}}(\pi)$ avoids 312.  By Proposition~\ref{prop:dihedral} the graphs $G_{\pi}$ and $G_{\sigma}$ are isomorphic, so since $|L_2(\pi)|=|L_2(\sigma)|$ Theorem~\ref{thm:312-upper} completes the proof. 
\end{proof}

\subsection{Lower Bound}

We have seen that the following permutations hit the upper bound of Conjecture~\ref{conj:RR}.
\begin{enumerate}[(i)]
\item $\pi=12[\alpha,\beta]$ if $\diam(G_{\alpha})=|L_2(\alpha)|$ and $\diam(G_{\beta})=|L_2(\beta)|$.
\item $\pi=21[\alpha,1]$ if $\diam(G_{\alpha})=|L_2(\alpha)|$.
\item A permutation $\pi$ that is either 231-avoiding or 312-avoiding.
\end{enumerate}
\begin{rmk}
\label{rmk:upper}
One can come up with a question: Are all the permutations that hit the upper bound of the conjecture constructed in these three ways? The answer is no because we have a counter example, $\pi=2413\in\fS_4$. The permutation $\pi$ satisfies none of three conditions, but $\diam(G_{\pi})=|L_2(\pi)|$.
\end{rmk}
It is natural for us to pay attention to the lower bound of the conjecture, and we look for permutations $\pi$ such that $\diam(G_{\pi})=\frac{1}{2}|L_2(\pi)|$. We make a list of all permutations $\pi\in\fS_n$ for $n=4,5,6$ such that $\diam(G_{\pi})=\frac{1}{2}|L_2(\pi)|$ in Table~\ref{tab:low_bound}. We also express them in terms of 12-inflations and 21-inflations of $\iota_k=12\dots k$ for some $k\geq 1$. This observation suggests the following conjecture.

\begin{conj}
Let $\pi\in\fS_n$ be a permutation. Then the permutation can be written as $\pi=12[\iota_c,21[\iota_a,\iota_b],\iota_d]$ with $a\geq 2$ and $b\geq 2$ if and only if $\diam(G_{\pi})=\frac{1}{2}|L_2(\pi)|$.
\end{conj}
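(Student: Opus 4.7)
The plan is to handle the two implications separately, with the ``if'' direction being largely accessible from the machinery already developed, and the ``only if'' direction requiring a structural induction whose final step is the main obstacle.

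\textbf{The ``if'' direction.} Assume $\pi = 12[\iota_c, 21[\iota_a, \iota_b], \iota_d]$ with $a, b \geq 2$. All inversions lie in the middle block of length $ab$ and no 321 pattern arises since each block is increasing, so $|I_3(\pi)| = 0$ and
\begin{equation*}
|I_2(\pi)| = \tbinom{ab}{2} - a\tbinom{b}{2} - b\tbinom{a}{2} = \tfrac{ab(a-1)(b-1)}{2} = 2\tbinom{a}{2}\tbinom{b}{2},
\end{equation*}
giving $|L_2(\pi)| = 2\binom{a}{2}\binom{b}{2}$. Two applications of Theorem~\ref{thm:12_formula} (the identity factors $\iota_c, \iota_d$ have length zero) reduce the problem to showing $\diam(G_{21[\iota_a,\iota_b]}) = \binom{a}{2}\binom{b}{2}$. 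The matching lower bound is immediate from Theorem~\ref{thm:21-bounds}(i) with $\alpha = \iota_a$.

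\textbf{The key new upper bound.} Theorem~\ref{thm:21-bounds} only gives the loose upper bound $2\binom{a}{2}\binom{b}{2}$, so the main technical step is to improve it. Specializing Corollary~\ref{cor:21-graph-iso} to $\alpha = \iota_a$, the graph $G_{21[\iota_a,\iota_b]}$ is isomorphic to $H_{\iota_a,\iota_b}$ with vertex set $\Ballot_{a,b}$, and its only available edges are ballot-moves, each of which changes $\ballot$ by exactly $\pm 1$. I would prove that $\ballot$ is the rank function of a graded poset on $\Ballot_{a,b}$ with unique maximum $\tilde x = (12\ldots a)^b$ and unique minimum $\tilde y = 1^b 2^b \cdots a^b$, for which $\ballot(\tilde x) = \binom{a}{2}\binom{b}{2}$ and $\ballot(\tilde y) = 0$. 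The downward claim is elementary: any adjacent descent in $w$ gives a ballot-move reducing $\ballot$ by one. For the upward claim I would identify $w \in \Ballot_{a,b}$ with its standard Young tableau of rectangular shape $(b^a)$ and show that $\tilde x$ is the unique such tableau in which every consecutive pair $(i,i+1)$ sits in the same row or the same column; in any other tableau the offending pair is swappable and swapping increases $\ballot$. Granting this, monotone paths yield $d(w, \tilde x) \leq \binom{a}{2}\binom{b}{2} - \ballot(w)$ and $d(w, \tilde y) \leq \ballot(w)$ for every $w$, and averaging the paths $w\to\tilde x\to w'$ and $w\to\tilde y\to w'$ gives $d(w,w') \leq \binom{a}{2}\binom{b}{2}$ for every pair.

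\textbf{The ``only if'' direction.} I would argue by strong induction on $|\pi|$ and a case analysis on structural decompositions. If $\pi = 12[\alpha, \beta]$ with both $\ell(\alpha), \ell(\beta) > 0$, then Theorem~\ref{thm:12_formula} combined with the identity $|L_2(12[\alpha,\beta])| = |L_2(\alpha)| + |L_2(\beta)| + \ell(\alpha)\ell(\beta)$ from Section~\ref{sec:RRconjecture} gives
\begin{equation*}
\diam(G_\pi) - \tfrac{1}{2}|L_2(\pi)| = \left(\diam(G_\alpha) - \tfrac12 |L_2(\alpha)|\right) + \left(\diam(G_\beta) - \tfrac12 |L_2(\beta)|\right) + \tfrac12\ell(\alpha)\ell(\beta),
\end{equation*}
and, using the Reiner--Roichman lower bound on the first two parenthesized summands (known inductively for all classes already treated in this paper), equality forces $\ell(\alpha)\ell(\beta) = 0$, so one factor is an identity that merges into $\iota_c$ or $\iota_d$ while the other has the desired form by the inductive hypothesis. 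For a sum-indecomposable $\pi$ that admits a skew decomposition of the form $\pi = 21[\alpha, \iota_b]$, the parallel calculation using Theorem~\ref{thm:21-bounds} and the corresponding $|L_2|$ identity forces $\ell(\alpha) = 0$, yielding exactly $\pi = 21[\iota_a, \iota_b]$.

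\textbf{Main obstacle.} The hard step is ruling out the remaining sum-indecomposable permutations: those admitting no skew decomposition of the form $21[\alpha, \iota_b]$, for which neither an exact diameter formula nor tight matching bounds are available. My intended approach is first to show that any $\pi$ containing a 321 pattern satisfies $\diam(G_\pi) > \tfrac{1}{2}|L_2(\pi)|$ via a refinement of the parity-of-braid-moves argument used in Lemma~\ref{lem:diam_bounds}, isolating the extra long braid cost that each 321 pattern contributes to a geodesic; then, among 321-avoiding sum-indecomposable $\pi$, use the classical description as a union of two increasing subsequences to argue that $\pi$ must be Grassmannian and, via the reduced-word encoding of Section~\ref{sec:21-inflations} applied to that Grassmannian shape, that equality forces the shape to be an $a\times b$ rectangle. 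Quantifying the long braid count of geodesics in the 321-containing case is the step I expect to require genuinely new ideas, as such control is exactly what is missing in the Reiner--Roichman conjecture itself.
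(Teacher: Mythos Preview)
The statement you are asked to prove is a \emph{conjecture} in the paper: the paper proves only the forward implication (if $\pi$ has the form $12[\iota_c,21[\iota_a,\iota_b],\iota_d]$ with $a,b\ge 2$, then $\diam(G_\pi)=\tfrac12|L_2(\pi)|$), recorded as Theorem~\ref{thm:low_bound}, and leaves the converse open. So there is no ``paper's own proof'' of the full biconditional to compare against.

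For the direction the paper does prove, your argument is essentially the same as theirs: strip off the identity factors via Theorem~\ref{thm:12_formula}, compute $|L_2(21[\iota_a,\iota_b])|=2\binom{a}{2}\binom{b}{2}$, and establish $\diam(G_{21[\iota_a,\iota_b]})=\binom{a}{2}\binom{b}{2}$ by recognizing $H_{\iota_a,\iota_b}$ as a graded poset on $\Ballot_{a,b}$ with rank function $\ballot$, unique maximum $(12\cdots a)^b$, unique minimum $1^b2^b\cdots a^b$, and then running the two-path averaging argument through the top and the bottom. The paper states the poset structure without justification (Lemma~\ref{lem:diam(21)}); your SYT-of-rectangular-shape argument is a reasonable way to supply it, though the ``downward'' direction alone already suffices for both existence of monotone paths and uniqueness of the extremes.

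For the converse, your plan has a genuine gap even before the obstacle you flag. In the sum-decomposable step you write
\[
\diam(G_\pi)-\tfrac12|L_2(\pi)|=\bigl(\diam(G_\alpha)-\tfrac12|L_2(\alpha)|\bigr)+\bigl(\diam(G_\beta)-\tfrac12|L_2(\beta)|\bigr)+\tfrac12\ell(\alpha)\ell(\beta)
\]
and conclude $\ell(\alpha)\ell(\beta)=0$ by asserting the first two summands are nonnegative. That nonnegativity is exactly the Reiner--Roichman lower bound, Conjecture~\ref{conj:RR}, which is open in general; your parenthetical ``known inductively for all classes already treated in this paper'' does not help, because for an arbitrary $\pi$ the factors $\alpha,\beta$ need not lie in any class handled here. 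The same circularity infects your $21[\alpha,\iota_b]$ step. In short, your proposed reduction of the converse already presupposes the very conjecture whose sharpness you are trying to characterize, and the residual sum-indecomposable case that you identify as the main obstacle is indeed untouched by the paper's methods.
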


We state and prove the ``only if" direction of the conjecture in the following theorem.

\begin{thm}
\label{thm:low_bound}
Let $\pi\in\fS_n$ be a permutation. If we can write $\pi=12[\iota_c,21[\iota_a,\iota_b],\iota_d]$ with $a,b\geq 2$ and $c,d\geq 0$, then $\diam(G_{\pi})=\frac{1}{2}|L_2(\pi)|$.
\end{thm}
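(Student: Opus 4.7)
I would split the argument into three stages: reducing $\pi$ to the inner 21-inflation via Theorem~\ref{thm:12_formula}, computing $|L_2(\pi)|$ directly, and sharpening the upper bound of Theorem~\ref{thm:21-bounds} in the special case $21[\iota_a,\iota_b]$ by introducing a second extremal ballot sequence.

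Since $\ell(\iota_k)=\diam(G_{\iota_k})=0$ for every $k\geq 0$, two applications of Theorem~\ref{thm:12_formula}(i) to $\pi=12[\iota_c,21[\iota_a,\iota_b],\iota_d]$ collapse to $\diam(G_\pi)=\diam(G_{21[\iota_a,\iota_b]})$. For the combinatorics of $L_2(\pi)$, each of the three blocks of $\pi$ is a union of increasing runs placed in increasing value ranges, so no 321 pattern can lie inside a block or straddle two blocks; hence $|I_3(\pi)|=0$. All inversions of $\pi$ lie between the $\iota_a$ block and the $\iota_b$ block in the middle, and the number of disjoint pairs of such inversions is $2\binom{a}{2}\binom{b}{2}$ (select two of the $a$ positions, two of the $b$ positions, and pair them in two ways). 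Thus $|L_2(\pi)|=2\binom{a}{2}\binom{b}{2}$, and it suffices to show $\diam(G_{21[\iota_a,\iota_b]})=\binom{a}{2}\binom{b}{2}$.

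The lower bound is immediate from Theorem~\ref{thm:21-bounds}(i) applied with $\alpha=\iota_a$, since $\ell(\alpha)=\diam(G_\alpha)=0$. For the upper bound I would work in the isomorphic graph $H_{\iota_a,\iota_b}$ from Theorem~\ref{thm:21-graph-iso}; its vertex set is $\Ballot_{a,b}$, and because $\ul{R}(\iota_a)=\ol{R}(\iota_b)=\{\varepsilon\}$ no underlined or overlined letters appear, so every edge is a ballot commutation move of type 1(c), which changes $\ballot$ by exactly $\pm 1$. Set $\tilde x=(12\cdots a)^b$ and $\tilde y=1^b2^b\cdots a^b$; then $\ballot(\tilde x)=\binom{a}{2}\binom{b}{2}$ is the maximum value and $\ballot(\tilde y)=0$ is the minimum, with $\tilde y$ the unique weakly-increasing ballot sequence. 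I claim
\[
d(w,\tilde x)=\binom{a}{2}\binom{b}{2}-\ballot(w),\qquad d(w,\tilde y)=\ballot(w)
\]
for every $w\in\Ballot_{a,b}$. The lower bounds follow from the $\pm 1$ change of $\ballot$ across each edge. For the first upper bound I would invoke Lemma~\ref{lem:21-path1}(a) with $u$ empty (so $\Cshift(w)=\Bshift(w)=0$); for the second, whenever $w\neq\tilde y$ we have $\ballot(w)>0$, so $w$ contains some adjacent descent $w_i>w_{i+1}$, and swapping it is a legal move of type 1(c) that decreases $\ballot$ by one, so iterating yields a path of length $\ballot(w)$ from $w$ to $\tilde y$.

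For arbitrary $w,w'\in\Ballot_{a,b}$, the triangle inequality then gives both
\begin{align*}
d(w,w')&\leq d(w,\tilde x)+d(\tilde x,w')=2\binom{a}{2}\binom{b}{2}-\ballot(w)-\ballot(w'),\\
d(w,w')&\leq d(w,\tilde y)+d(\tilde y,w')=\ballot(w)+\ballot(w');
\end{align*}
the two right-hand sides sum to $2\binom{a}{2}\binom{b}{2}$, so their minimum is at most $\binom{a}{2}\binom{b}{2}$. Combined with the lower bound this proves $\diam(G_\pi)=\binom{a}{2}\binom{b}{2}=\tfrac{1}{2}|L_2(\pi)|$. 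The main obstacle is sharpening the upper bound on $\diam(G_{21[\iota_a,\iota_b]})$: Theorem~\ref{thm:21-bounds} on its own only yields $2\binom{a}{2}\binom{b}{2}$ because the two routes $P_1,P_2$ in its proof collapse to a single path through $\tilde x$ when $u$ is empty, so the symmetric route through the ``opposite extreme'' $\tilde y$ is exactly what is needed to cut the bound in half.
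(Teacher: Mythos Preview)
Your proposal is correct and follows essentially the same route as the paper: reduce to $21[\iota_a,\iota_b]$ via Theorem~\ref{thm:12_formula}, compute $|L_2|=2\binom{a}{2}\binom{b}{2}$, and then establish $\diam(G_{21[\iota_a,\iota_b]})=\binom{a}{2}\binom{b}{2}$ by routing any pair of ballot sequences through the two extremal vertices $\tilde x=(12\cdots a)^b$ and $\tilde y=1^b2^b\cdots a^b$ and taking the shorter of the two paths. The paper packages this last step as a graded-poset argument (Lemma~\ref{lem:diam(21)}) with rank function $\ballot$, while you spell out the path lengths explicitly via Lemma~\ref{lem:21-path1}(a) and an adjacent-descent iteration, but the content is identical.
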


\begin{rmk}
For the permutation $\pi$ in Theorem~\ref{thm:low_bound}, we can have $\pi=21[\iota_a,\iota_b]$ when $c=0$ and $d=0$. 
\end{rmk}

\begin{table}

\begin{center}
\begin{tabular}{|c|c|c|c|c|c|}
\hline
\multicolumn{2}{|c|}{$\fS_4$} & \multicolumn{2}{|c|}{$\fS_5$} & \multicolumn{2}{|c|}{$\fS_6$} \\
\hline
\hline
$3412$& $21[\iota_2,\iota_2]$&$14523$&$12[\iota_1,21[\iota_2,\iota_2]]$ & $125634$& $12[\iota_2,21[\iota_2,\iota_2]]$\\
\hline
& &$34125$&$12[21[\iota_2,\iota_2],\iota_1]$ & $145236$& $12[12[\iota_1,21[\iota_2,\iota_2]],\iota_1]$\\
\hline
& &$34512$&$21[\iota_3,\iota_2]$ & $145623$& $12[\iota_1,21[\iota_3,\iota_2]]$\\
\hline
& &$45123$&$21[\iota_2,\iota_3]$ & $156234$& $12[\iota_1,21[\iota_2,\iota_3]]$\\
\hline
& & & & $341256$& $12[21[\iota_2,\iota_2],\iota_2]$\\
\hline
& & & & $345126$& $12[21[\iota_3,\iota_2],\iota_1]$\\
\hline
& & & & $345612$& $21[\iota_4,\iota_2]$\\
\hline
& & & & $451236$& $12[21[\iota_2,\iota_3],\iota_1]$\\
\hline
& & & & $456123$& $21[\iota_3,\iota_3]]$\\
\hline
& & & & $561234$& $21[\iota_2,\iota_4]$\\
\hline
\end{tabular}
\end{center}
\caption{All permutations $\pi$ such that $\diam(G_{\pi})=\frac{1}{2}|L_2(\pi)|$ in $\fS_n$ for $n=4,5,6$ and expressions in terms of 12-inflations and 21-inflations of $\iota_k$ for some $k\geq 1$.}
\label{tab:low_bound}
\end{table}

To prove Theorem~\ref{thm:low_bound}, we first state and prove the following lemmas.

\begin{lem}
\label{lem:diam(12[alpha,iota])}
$\diam(G_{12[\alpha,\iota_b]})=\diam(G_{12[\iota_b,\alpha]})=\diam(G_{\alpha})$ for any permutation $\alpha$.
\end{lem}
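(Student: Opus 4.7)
The plan is to apply Theorem~\ref{thm:12_formula} directly. For any permutation $\alpha$, the inflations $12[\alpha,\iota_b]$ and $12[\iota_b,\alpha]$ both have the identity $\iota_b$ as one of their blocks, and the identity is the unique permutation of length zero. So first I would record the two basic facts that $\ell(\iota_b)=0$ and $|R(\iota_b)|=1$ (the only reduced word of the identity is the empty word), which together imply $\diam(G_{\iota_b})=0$.

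Then, applying part (i) of Theorem~\ref{thm:12_formula} to $\pi=12[\alpha,\iota_b]$ gives
\begin{equation*}
\diam(G_{\pi})=\diam(G_{\alpha})+\diam(G_{\iota_b})+\ell(\alpha)\ell(\iota_b)=\diam(G_{\alpha})+0+0=\diam(G_{\alpha}).
\end{equation*}
The same computation applied to $\pi=12[\iota_b,\alpha]$ yields $\diam(G_{\pi})=\diam(G_{\iota_b})+\diam(G_{\alpha})+\ell(\iota_b)\ell(\alpha)=\diam(G_{\alpha})$, which proves the lemma.

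There is no real obstacle here: the statement is essentially a degenerate case of Theorem~\ref{thm:12_formula} where one of the two blocks contributes nothing to either the diameter term or the shuffling term $\ell(\alpha)\ell(\beta)$. The only subtlety to mention is that the base case of the empty/identity permutation in Theorem~\ref{thm:12_formula} needs to be admissible, which it is because shuffling with an empty word is the identity operation on reduced words.
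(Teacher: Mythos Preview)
Your proof is correct and follows essentially the same approach as the paper: both apply Theorem~\ref{thm:12_formula}(i) directly, using $\ell(\iota_b)=0$ and $\diam(G_{\iota_b})=0$ to collapse the formula to $\diam(G_{\alpha})$. Your version is slightly more explicit in justifying $\diam(G_{\iota_b})=0$, but the argument is the same.
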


\begin{proof}
By applying Theorem~\ref{thm:12_formula}, we can see that
\begin{align*}
\diam(G_{12[\alpha,\iota_b]})&=\diam(G_{\alpha})+\diam(G_{\iota_b})+\ell(\alpha)\ell(\iota_b)\\
&=\diam(G_{\alpha})+0+\ell(\alpha)\cdot0\\
&=\diam(G_{\alpha}).
\end{align*}
Similarly, we can show $\diam(G_{12[\iota_b,\alpha]})=\diam(G_{\alpha})$.
\end{proof}

By the lemma above, we only need to work on $21[\iota_a,\iota_b]\in\fS_n$ from now.

\begin{lem}
\label{lem:L_2(21)}
Let $\pi=21[\iota_a,\iota_b]\in\fS_n$. Then, $|L_2(\pi)|=2{a\choose2}{b\choose2}$.
\end{lem}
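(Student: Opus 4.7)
The plan is to compute $|I_2(\pi)|$ and $|I_3(\pi)|$ directly from the explicit one-line form of $\pi$. Writing $\pi = 21[\iota_a,\iota_b]$ out, we get
\[
\pi \;=\; (b+1)(b+2)\cdots(b+a)\,1\,2\cdots b,
\]
so $\pi$ consists of two increasing runs: a ``top" block of length $a$ in positions $1,\dots,a$ with values $b+1,\dots,b+a$, and a ``bottom" block of length $b$ in positions $a+1,\dots,a+b$ with values $1,\dots,b$. The inversion set is therefore
\[
\Inv(\pi)\;=\;\{(i,j):1\le i\le a,\; a+1\le j\le a+b\},
\]
of size $ab=\ell(\pi)$.

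First I would dispose of $I_3(\pi)$. Since a $321$ pattern requires three values appearing in strictly decreasing order, and the only descent in $\pi$ is between positions $a$ and $a+1$, any candidate triple of positions must contain either two positions from the top block or two from the bottom block; but within each block the entries are increasing, so such a triple cannot be order-isomorphic to $321$. Hence $|I_3(\pi)|=0$.

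Next I would count $I_2(\pi)$. Every inversion uses one top-block position and one bottom-block position, so two inversions $(i_1,j_1)$ and $(i_2,j_2)$ are disjoint exactly when $i_1\ne i_2$ and $j_1\ne j_2$. Choose an unordered pair of top-block positions in $\binom{a}{2}$ ways and an unordered pair of bottom-block positions in $\binom{b}{2}$ ways; for each such choice of four positions $\{i_1,i_2\}\cup\{j_1,j_2\}$ there are exactly two disjoint inversion pairs, namely $\{(i_1,j_1),(i_2,j_2)\}$ and $\{(i_1,j_2),(i_2,j_1)\}$. Conversely every disjoint pair of inversions arises this way. Therefore $|I_2(\pi)|=2\binom{a}{2}\binom{b}{2}$, and combining with $|I_3(\pi)|=0$ gives
\[
|L_2(\pi)|\;=\;|I_2(\pi)|+|I_3(\pi)|\;=\;2\binom{a}{2}\binom{b}{2},
\]
as claimed. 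There is no real obstacle here; the only mild subtlety is remembering that each $4$-tuple of positions contributes two disjoint pairs (not one), which is essential to matching the target factor of $2$.
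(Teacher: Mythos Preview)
Your proof is correct and follows essentially the same approach as the paper: both identify the inversion set as all pairs with one index in each block, note that $|I_3(\pi)|=0$ because $\pi$ has no $321$ pattern, and then count disjoint inversion pairs to obtain $|I_2(\pi)|=2\binom{a}{2}\binom{b}{2}$. Your explanation of why each choice of four positions yields exactly two disjoint pairs is a bit more explicit than the paper's, but the argument is the same.
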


\begin{proof}
Let $\pi=21[\iota_a,\iota_b]=(1+b)(2+b)\dots(a+b)12\dots b$. Since there are not any 321 patterns in $\pi$, we see that $|I_3(\pi)|=0$. Note that all inversions in $\pi$ are of the form $(i, j+b)$ for $j\in[a], i\in[b]$. Thus, the number $|I_2(\pi)|$ of disjoint pairs of inversions in $\pi$ is $ab(a-1)(b-1)/2=2{a\choose2}{b\choose2}$. Therefore, $|L_2(\pi)|=|I_2(\pi)|+|I_3(\pi)|=2{a\choose2}{b\choose2}$.
\end{proof}

\begin{lem}
\label{lem:diam(21)}
Let $\pi=21[\iota_a,\iota_b]\in\fS_n$. Then, $\diam(G_{\pi})={a\choose2}{b\choose2}$.
\end{lem}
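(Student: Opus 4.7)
The plan is to identify $G_\pi$ with $H_{\iota_a,\iota_b}$ via Corollary~\ref{cor:21-graph-iso} and work entirely on the graph of ballot sequences. Since $\ul{R}(\iota_a)$ and $\ol{R}(\iota_b)$ are each $\{\epsilon\}$, the vertex set $V_{\iota_a,\iota_b}$ equals $\Ballot_{a,b}$, and the only available moves are the ballot-moves of type~1(b): swaps of adjacent $pq$, each changing the statistic $\ballot$ by exactly one. The lower bound $\diam(G_\pi)\geq\binom{a}{2}\binom{b}{2}$ is then immediate from Theorem~\ref{thm:21-bounds}(i) applied with $\alpha=\iota_a$, since both $\diam(G_{\iota_a})$ and $\ell(\iota_a)$ vanish.

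For the matching upper bound, I will run an averaging argument between two distinguished ballot sequences, namely $\tilde x=(12\cdots a)^b$ with $\ballot(\tilde x)=\binom{a}{2}\binom{b}{2}$ (the maximum) and $\tilde y=1^b 2^b\cdots a^b$ with $\ballot(\tilde y)=0$ (the minimum). Given $x,x'\in\Ballot_{a,b}$, Lemma~\ref{lem:21-path1}(a) applied with $u=\epsilon$ (so $\Cshift=\Bshift=0$) furnishes a path $x\to\tilde x\to x'$ of length $2\binom{a}{2}\binom{b}{2}-\ballot(x)-\ballot(x')$. Symmetrically, I will produce a path $x\to\tilde y\to x'$ of length $\ballot(x)+\ballot(x')$. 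Since the two path lengths sum to $2\binom{a}{2}\binom{b}{2}$, whichever is shorter has length at most $\binom{a}{2}\binom{b}{2}$, so $d(x,x')\leq\binom{a}{2}\binom{b}{2}$ and the upper bound follows.

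Note that this averaging is new: the proof of Theorem~\ref{thm:21-bounds} averages paths running through $H_1$ and through $H_2$ (both anchored at the same ballot sequence $\tilde x$), which collapses to a single path when $\alpha=\iota_a$ and therefore loses a factor of two in the ballot term. Averaging instead between $\tilde x$ and $\tilde y$ exploits the fact that every move is a ballot-move and closes that gap exactly.

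The one step not already in the earlier lemmas is the descending path of length $\ballot(w)$ from an arbitrary $w$ to $\tilde y$, and this is the main (minor) obstacle. The key observation is that any $w$ with $\ballot(w)>0$ must contain an adjacent pair $w_iw_{i+1}=pq$ with $p>q$ (otherwise $w$ would be weakly increasing and hence equal to $\tilde y$), and the commutation rule~1(b) allows swapping such a pair with no side condition while decreasing $\ballot$ by exactly one. Iterating until $\ballot$ reaches $0$ terminates at $\tilde y$ after precisely $\ballot(w)$ steps. Once this is in hand the lower and upper bounds meet and the lemma is proved.
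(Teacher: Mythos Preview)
Your proof is correct and follows essentially the same approach as the paper: both identify $G_\pi$ with the graph on $\Ballot_{a,b}$ with only ballot-moves, and both obtain the upper bound by averaging two paths, one through $\tilde x=(12\cdots a)^b$ and one through $\tilde y=1^b2^b\cdots a^b$, whose lengths sum to $2\binom{a}{2}\binom{b}{2}$. The paper frames this in poset language (a graded poset of rank $\binom{a}{2}\binom{b}{2}$ with $\hat 0=\tilde y$ and $\hat 1=\tilde x$, rank function $\rho=\ballot$), while you cite Lemma~\ref{lem:21-path1}(a) for the ascending path and give the descending-path argument explicitly; for the lower bound the paper exhibits the pair $\tilde x,\tilde y$ directly, whereas you invoke Theorem~\ref{thm:21-bounds}(i), but these are the same computation.
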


\begin{proof}
In Section~\ref{subsec:21-inflation_ecoding} we discussed that the graph $G_{\pi}=G_{21[\iota_a,\iota_b]}$ is isomorphic to the graph $H_{\iota_a,\iota_b}$. The vertex set $V(H_{\iota_a,\iota_b})$ is the set $\Ballot_{a,b}=\{1^b2^b\dots a^b,\dots,(12\dots a)^b\}$ of all ballot sequences and the edges are formed from exchanging adjacent letters $w_iw_{i+1}=pq$ of $w\in \Ballot_{a,b}$ if either (i) $p>q$ or (ii) $p<q$ and $N_p(w^{(i-1)})>N_q(w^{(i-1)})$. Observe that the graph $H_{\iota_a,\iota_b}$ is a graded poset of rank ${a\choose 2}{b\choose2}$ with a unique maximum $(123\dots a)^b$ and a unique minimum $1^b2^b3^b\dots a^b$ by the rank function $\rho(w):=\ballot(w)=|\{(i,i'):i<i', w_i=j, w_{i'}=k, j>k\}|$. See Figure~\ref{fig:Ballot_poset} for an example of the Hasse diagram of $G_{21[\iota_3,\iota_2]}\cong H_{\iota_3,\iota_2}$. 

We claim that the diameter of $G_{\pi}$ is the rank ${a\choose2}{b\choose2}$ of the poset $G_{\pi}$. Since the distance between the maximum and minimum is ${a\choose2}{b\choose2}$, we have $\diam(G_{\pi})\geq {a\choose 2}{b\choose2}$. Suppose $u\neq v$ are any ballot sequences in $\Ballot_{a,b}$. If $u$ and $v$ are in a same chain, then $d(u,v)\leq {a\choose2}{b\choose2}$. Assume $u$ and $v$ are not in a same chain. Then we can construct two different paths from $u$ to $v$. Let the first path $P$ starts at $u$ to the maximum element and ends at $v$. Let the second path $Q$ starts at $u$ to the minimum element and ends at $v$. Either $P$ or $Q$ is bounded above by the rank of the poset ${a\choose 2}{b\choose 2}$, and we see $\diam(G_{\pi})\leq {a\choose 2}{b\choose 2}$. This shows $\diam(G_{\pi})={a\choose2}{b\choose2}$.
\end{proof}

\begin{figure}
\label{fig:Ballot_poset}
\begin{center}
\begin{tikzpicture}[scale=0.5]
\node (max) at (0,4) {$123123$};
  \node (121323) at (0,2) {$121323$};
  \node (112323) at (3,0) {$112323$};
  \node (121233) at (-3,0) {$121233$};
  \node (min) at (0,-2) {$112233$};
  \draw (min) -- (112323) -- (121323) -- (max);
  \draw (min) -- (121233) -- (121323);
\end{tikzpicture}
\end{center}
\caption{The Hasse diagram of $G_{21[\iota_3,\iota_2]}\cong H_{\iota_3,\iota_2}$.}
\end{figure}
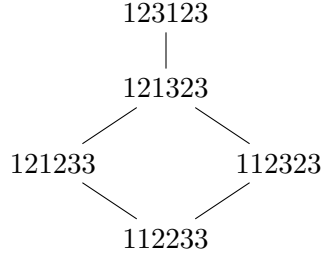

\begin{proof}[Proof of Theorem~\ref{thm:low_bound}]
Suppose $\pi=12[\iota_c,21[\iota_a,\iota_b],\iota_d]$ with $a,b\geq 2$ and $c,d\geq 0$. By Lemma~\ref{lem:diam(12[alpha,iota])}, we have $\diam(G_{\pi})=\diam(G_{21[\iota_a,\iota_b]})$. By Lemma~\ref{lem:L_2(21)} and Lemma~\ref{lem:diam(21)}, we have
\begin{equation*}
\diam(G_{\pi})={a\choose2}{b\choose2}=\frac{1}{2}|L_2(\pi)|
\end{equation*}
and this completes the proof.
\end{proof}

\section{Future Work}
\label{sec:future}
As we discussed in Remark~\ref{rmk:upper}, there are more families of permutations that achieve the upper bound of Conjecture~\ref{conj:RR}. This should be one direction we can study in the future.
\begin{op}
Find the necessary and sufficient  condition for permutations $\pi\in\fS_n$ to satisfy $\diam(G_{\pi})=|L_2(\pi)|$.
\end{op}
After investigating the diameters of $G_{\pi}$ for all permutations $\pi\in \fS_n$ for $n=4,5,6$, we make the following conjecture.
\begin{conj}
If $\pi\in\fS_n$ contains a pattern $3412$, then $\diam(G_{\pi})<|L_2(\pi)|$.
\end{conj}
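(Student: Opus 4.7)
The plan is to prove the contrapositive: assume $\diam(G_{\pi})=|L_2(\pi)|$ and deduce that $\pi$ avoids 3412, by strong induction on $n=|\pi|$. (The original conjecture is equivalent to this under Conjecture~\ref{conj:RR}'s predicted upper bound $\diam\leq|L_2|$; at worst, one carries the R-R upper bound alongside in the induction since it is known in all the recursive cases arising below.) The base cases $n\leq 3$ hold since no such permutation contains 3412.

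If $\pi=12[\alpha,\beta]$ is a nontrivial 12-inflation, the identity $|L_2(12[\alpha,\beta])|=|L_2(\alpha)|+|L_2(\beta)|+\ell(\alpha)\ell(\beta)$ from Section~\ref{sec:RRconjecture} together with Theorem~\ref{thm:12_formula} force $\diam(G_{\alpha})=|L_2(\alpha)|$ and $\diam(G_{\beta})=|L_2(\beta)|$; the induction hypothesis then makes $\alpha$ and $\beta$ both 3412-avoiding. Since the two blocks of $12[\alpha,\beta]$ occupy disjoint position ranges with block 2's values strictly above block 1's, a short case analysis on how four positions could split across the blocks shows any 3412 pattern of $\pi$ must lie entirely within one block, so $\pi$ avoids 3412. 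If instead $\pi=21[\alpha,\iota_b]$ is a 21-inflation by the identity (with the symmetric case $\pi=21[\iota_a,\beta]$ handled via the $r_{180^{\circ}}$ symmetry of Proposition~\ref{prop:dihedral}, which preserves $|L_2|$, $\diam$, and 3412-containment since $r_{180^{\circ}}(3412)=3412$), then the formula $|L_2(\pi)|=|L_2(\alpha)|+\ell(\alpha)b(a-1)+2\binom{a}{2}\binom{b}{2}$ together with Theorem~\ref{thm:21-bounds}'s upper bound forces $\diam(G_{\alpha})=|L_2(\alpha)|$ (so by induction $\alpha$ avoids 3412) and forces the upper bound of Theorem~\ref{thm:21-bounds} to be attained.

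The main obstacle is the following subclaim needed in the 21-inflation case: if $a,b\geq 2$ and $\alpha\neq\delta_a$, then the upper bound of Theorem~\ref{thm:21-bounds} is strict. Small cases support this dichotomy precisely along 3412-containment --- $\alpha=\iota_2, b=2$ gives $\pi=3412$ with $\diam=1<2=|L_2|$, while $\alpha=\delta_2, b=2$ gives $\pi=4312$ with $\diam=4=|L_2|$ (verified by direct enumeration) --- and Theorem~\ref{thm:low_bound} already handles the extreme case $\alpha=\iota_a$. I would approach the general subclaim by refining the two-path averaging argument of Theorem~\ref{thm:21-bounds}: when $\alpha$ has an ascent, one should be able to construct a third route $P_3$ from $w$ to $w'$ through an intermediate vertex that exploits the ascent to reshuffle the ballot portion and save at least one shift commutation step, thereby improving the minimum over $\{P_1,P_2,P_3\}$ and tightening the diameter bound by one. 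Granting the subclaim, tightness forces $a=1$, $b=1$, or $\alpha=\delta_a$, each of which excludes straddling 3412 patterns; combined with $\alpha$ avoiding 3412, this gives that $\pi$ avoids 3412. Finally, a separate argument is required for $\pi$ that is neither a nontrivial 12-inflation nor a 21-inflation with an identity block (e.g., $\pi=2413$ of Remark~\ref{rmk:upper}); one natural route is to classify such ``doubly indecomposable'' permutations with $\diam(G_{\pi})=|L_2(\pi)|$ using the theory of simple permutations and verify 3412-avoidance case by case.
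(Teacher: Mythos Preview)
This statement is presented in the paper as a \emph{conjecture} in the Future Work section, supported only by computational evidence for $n\le 6$; the paper offers no proof. So there is nothing to compare your argument against---rather, you are attempting to settle an open problem.

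Your outline has two genuine gaps that leave the conjecture unresolved. First, the ``subclaim'' that the upper bound of Theorem~\ref{thm:21-bounds} is strict whenever $a,b\ge 2$ and $\alpha\neq\delta_a$ is the crux of the $21$-inflation case, and you do not prove it. The suggestion of a ``third route $P_3$ that exploits an ascent of $\alpha$ to save a commutation step'' is a plausible heuristic, but the averaging argument in Theorem~\ref{thm:21-bounds} bounds \emph{every} pair $w,w'$, whereas to lower the diameter you must shorten a path between a \emph{worst-case} pair; it is not clear that an ascent in $\alpha$ always allows this, nor that the saving survives when the ballot portions of $w,w'$ are chosen adversarially. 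Without this subclaim, the $21$-inflation branch of your induction does not close.

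Second, and more seriously, your induction simply does not cover all permutations. You treat nontrivial $12$-inflations and $21$-inflations of the form $21[\alpha,\iota_b]$ or $21[\iota_a,\beta]$, but many permutations---for instance the simple permutation $2413$, or any $21[\alpha,\beta]$ with both $\alpha,\beta$ nontrivial and not expressible in one of your special forms---fall into neither class. You acknowledge this in your final paragraph but offer only the phrase ``classify such doubly indecomposable permutations \ldots\ and verify 3412-avoidance case by case,'' which is not an argument: there are infinitely many simple permutations, and nothing in the paper gives a handle on $\diam(G_\pi)$ for them. This is not a minor omission; it is precisely the part of the problem that the paper's recursive machinery cannot reach, and it is why the authors left the statement as a conjecture.
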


\end{document}